\numberwithin{equation}{section}
\theoremstyle{plain}
\newtheorem{thm}{\protect\theoremname}[section]
\theoremstyle{definition}
\newtheorem{problem}[thm]{\protect\problemname}
\theoremstyle{definition}
\newtheorem*{defn*}{\protect\definitionname}
\theoremstyle{plain}
\newtheorem{prop}[thm]{\protect\propositionname}
\theoremstyle{remark}
\newtheorem{rem}[thm]{\protect\remarkname}
\theoremstyle{plain}
\newtheorem{lem}[thm]{\protect\lemmaname}
\theoremstyle{plain}
\newtheorem{cor}[thm]{\protect\corollaryname}
\theoremstyle{definition}
\newtheorem{defn}[thm]{\protect\definitionname}
\newcommand{\lin}{\operatorname{lin}}
\newcommand*{\e}{\mathrm{e}}
\renewcommand*{\i}{\mathrm{i}}
\newcommand{\m}{\operatorname{m}}
\newcommand{\R}{\mathbb{R}}
\newcommand{\N}{\mathbb{N}}
\renewcommand{\C}{\mathbb{C}}
\newcommand{\dom}{\operatorname{dom}}
\newcommand{\ran}{\operatorname{ran}}
\newcommand{\spt}{\operatorname{spt}}
\renewcommand{\d}{\,\mathrm{d}}
\renewcommand{\Re}{\operatorname{Re}}
\renewcommand{\Im}{\operatorname{Im}}
\newcommand{\grad}{\operatorname{grad}}
\newcommand{\sgn}{\operatorname{sgn}}
\newcommand{\curl}{\operatorname{curl}}
\newcommand{\dive}{\operatorname{div}}
\renewcommand{\tilde}{\widetilde}
\theoremstyle{definition}
\newtheorem{hyp}{Hypotheses}
\providecommand{\corollaryname}{Corollary}
\providecommand{\definitionname}{Definition}
\providecommand{\lemmaname}{Lemma}
\providecommand{\problemname}{Problem}
\providecommand{\propositionname}{Proposition}
\providecommand{\remarkname}{Remark}
\providecommand{\theoremname}{Theorem}
\begin{document}

\title{Maximal Regularity for Non-Autonomous Evolutionary Equations}

\author{Sascha Trostorff\footnote{Sascha Trostorff, Mathematisches Seminar, CAU Kiel, Germany}\hspace{0.1cm} \& Marcus Waurick\footnote{Marcus
Waurick, Department of Mathematics and Statistics, University of Strathclyde,
Glasgow, Scotland}
}
\maketitle
\begin{abstract}
We discuss the issue of maximal regularity for evolutionary equations
with non-autonomous coefficients. Here evolutionary equations are
abstract partial-differential algebraic equations considered in Hilbert
spaces. The catch is to consider time-dependent partial differential
equations in an exponentially weighted Hilbert space. In passing,
one establishes the time derivative as a continuously invertible,
normal operator admitting a functional calculus with the Fourier--Laplace
transformation providing the spectral representation. Here, the main
result is then a regularity result for well-posed evolutionary equations
solely based on an assumed parabolic-type structure of the equation
and estimates of the commutator of the coefficients with the square
root of the time derivative. We thus simultaneously generalise available
results in the literature for non-smooth domains. Examples for equations
in divergence form, integro-differential equations, perturbations
with non-autonomous and rough coefficients as well as non-autonomous
equations of eddy current type are considered.
\end{abstract}
\textbf{MSC2020}: 35B65, 35R20, 35K90, 26A33

\textbf{Keywords}: Non-autonomous maximal regularity, Evolutionary equations,
Lions' problem, Commutator estimates, Riemann--Liouville fractional
derivative

\section{Introduction}

If one considers partial differential equations depending on time
as an equation in space-time the following problem of maximal regularity
arises naturally. For the sake of the argument, let $\mathcal{H}$
be a Hilbert space modelling space-time and let $D$ and $A$ be two
closed, densely defined (unbounded) operators, where the former contains
the temporal and the latter the spatial derivative(s). Abstractly
spoken, the PDE in question then may look like as follows:
\[
Du+Au=f
\]
for some right-hand side $f\in\mathcal{H}$. In particular, when hyperbolic
type problems are concerned (think of the transport equation or the
wave equation), one cannot expect that for any $f\in\mathcal{H}$
(usually an $L_{2}$-type space) the solution $u$ to belong to both
$\dom(D)$ and $\dom(A)$. In general one can only hope for $u\in\dom(\overline{D+A}),$
thus deeming the above equation to be true only in some \emph{generalised}
sense. A first example, where it is possible to show that $u$ belongs
to the individual domains given any $f\in\mathcal{H}$ is when $D=\partial_{t}$
and $A=-\Delta_{D}$ (Laplacian with Dirichlet boundary conditions
on some open $\Omega\subseteq\mathbb{R}^{n}$) and $\mathcal{H}=L_{2}(0,T;L_{2}(\Omega))$
and $u$ is assumed to satisfy homogeneous initial conditions. Then,
the solution $u$ indeed belongs to $H^{1}(0,T;L_{2}(\Omega))\cap L_{2}(0,T;\dom(\Delta_{D}))$,
see e.g.~\cite{Lions1961} or below. Traditionally, the method of
choice to derive such a regularity result first establishes well-posedness
of the equation at hand via bilinear forms and afterwards analysing
the problem in terms of the associated generator. Quite naturally
and generalising the above situation considerably, Lions raised the
following problem (see \cite[p. 68]{Lions1961}):
\begin{problem}
\label{prob:Lions}Let $V$ and $H$ be Hilbert spaces such that $V\hookrightarrow H$
continuously and densely. Let $a\colon[0,T]\times V\times V\to\mathbb{C}$
be such that $a(t,\cdot,\cdot)$ is sesquilinear, satisfying suitable
boundedness coercivity and measurability conditions thus defining
$\mathfrak{A}(t)$ via $\langle\mathfrak{A}(t)x,y\rangle_{V,V^{*}}\coloneqq a(t,x,y)$,
$t\in[0,T]$. Let $f\in L_{2}(0,T;H)$ be given. Then there exists
a unique solution $u\in H^{1}(0,T;V^{*})\cap L_{2}(0,T;V)$ of
\[
u'(t)+\mathfrak{A}(t)u(t)=f(t)\quad u(0)=0.
\]
 The question now is under which conditions on $a$, do we actually
have $u\in H^{1}(0,T;H)$?
\end{problem}

The latter problem indeed fits into the above abstract perspective
for $D=\partial_{t}$ with domain $H^{1}(0,T;H)$ with Dirichlet boundary
conditions at $0$ and $\tilde{A}=\mathfrak{A}\colon\dom(\mathfrak{A})\subseteq L_{2}(0,T;H)\to L_{2}(0,T;H),u\mapsto(t\mapsto\mathfrak{A}(t)u(t))$
with maximal domain. \prettyref{prob:Lions} has a long history and
has rather recently gained some renewed attention. For the latest
developments, we refer to the survey article in \cite{Arendt2017},
to \cite{Achache2019} and its introduction. We recall here that Hölder
continuity for $a$ (and particularly the Hölder exponent $1/2$)
with respect to time in a suitable sense plays a crucial role, see
e.g.~\cite{Ouhabaz2010,Fackler2017} for a positive and a negative
result, respectively. 

The available results in the literature up to this point consider
explicit Cauchy problems similar to the one in \prettyref{prob:Lions}.
Thus, in any case, the complexity of the problem is contained in the
form $a$ (or in the operator $\mathfrak{A}$). 

In this article we set a different focus and try to keep the operator
containing the spatial derivatives (i.e.~$\mathfrak{A}$) as simple
as possible and move the complexity over to the time derivative. The
rationale behind this is the notion of so-called evolutionary equations,
invented in \cite{Picard2009} and rather self-contained discussed
in \cite{Picard2020,seifert2020evolutionary}. More precisely, we
consider equations of the form
\[
\left(\partial_{t}\mathcal{M}+\mathcal{N}+A\right)U=F,
\]
where $f$ belongs to an exponentially weighted $L_{2}$-space, $A$
is an unbounded skew-selfadjoint operator solely acting with respect
to the spatial variables and $\mathcal{M}$ and $\mathcal{N}$ are
suitable bounded linear operators in space-time. The solution theory
developed in \cite{W16_H,Waurick2015} asserts that under suitable
positive definiteness conditions imposed on $\partial_{t}\mathcal{M}+\mathcal{N}$,
one indeed has that $\overline{\left(\partial_{t}\mathcal{M}+\mathcal{N}+A\right)}$
is continuously invertible. In the framework of evolutionary equations,
the maximal regularity problem then reads as follows.
\begin{problem}
\label{prob:LionsEE}Given $\partial_{t}\mathcal{M}+\mathcal{N}$
satisfies the appropriate positive definiteness conditions and $A$
skew-selfadjoint in order that $\overline{\left(\partial_{t}\mathcal{M}+\mathcal{N}+A\right)}$
is continuously invertible, what are the additional conditions on
$\mathcal{M}$ and $\mathcal{N}$ (and the right-hand side $F$) such
that 
\[
\overline{\left(\partial_{t}\mathcal{M}+\mathcal{N}+A\right)}^{-1}F=\left(\partial_{t}\mathcal{M}+\mathcal{N}+A\right)^{-1}F\ ?
\]
\end{problem}

We emphasise that even in the time-independent case \prettyref{prob:Lions}
and \prettyref{prob:LionsEE} are rather different types of questions.
In fact, since $A$ is skew-selfadjoint in \prettyref{prob:LionsEE},
the choices $\mathcal{M}=1$ and $\mathcal{N}=0$ for $F\notin H^{1}$
with respect to time do not lead to $\left(\overline{\partial_{t}+A}\right)^{-1}F=\left(\partial_{t}+A\right)^{-1}F$,
if $A$ is unbounded. As it will be obvious in the next example for
a solution of \prettyref{prob:LionsEE} one is particularly interested
in cases where $F$ belongs to spaces not as smooth as $H^{1}$. 

In the autonomous case, \prettyref{prob:LionsEE} has been addressed
in \cite{PTW2017_maxreg}. The conditions derived describe a parabolic
type evolutionary equation in an abstract manner. Indeed, one assume
that there exists a densely defined closed linear operator $C$ (acting
in the spatial variables only) such that 
\[
A=\left(\begin{array}{cc}
0 & -C^{*}\\
C & 0
\end{array}\right).
\]
Moreover, one has that 
\[
\mathcal{M}=\left(\begin{array}{cc}
\mathcal{M}_{00} & 0\\
0 & 0
\end{array}\right)\text{ and }\mathcal{N}=\left(\begin{array}{cc}
\mathcal{N}_{00} & \mathcal{N}_{01}\\
\mathcal{N}_{10} & \mathcal{N}_{11}
\end{array}\right)
\]
with $\mathcal{N}_{11}$ satisfying an additional positive definiteness
condition. The standard case of the heat equation $\partial_{t}u-\Delta_{D}u=f$
mentioned above is then recovered by putting $q=-\grad_{0}u$ (gradient
subject to homogeneous Dirichlet boundary conditions) and considering
\[
\left(\partial_{t}\left(\begin{array}{cc}
1 & 0\\
0 & 0
\end{array}\right)+\left(\begin{array}{cc}
0 & 0\\
0 & 1
\end{array}\right)+\left(\begin{array}{cc}
0 & \dive\\
\grad_{0} & 0
\end{array}\right)\right)\left(\begin{array}{c}
u\\
q
\end{array}\right)=\left(\begin{array}{c}
f\\
0
\end{array}\right).
\]
Then, indeed, by the main result of \cite{PTW2017_maxreg}, one has
\begin{align*}
 & \overline{\left(\partial_{t}\left(\begin{array}{cc}
1 & 0\\
0 & 0
\end{array}\right)+\left(\begin{array}{cc}
0 & 0\\
0 & 1
\end{array}\right)+\left(\begin{array}{cc}
0 & \dive\\
\grad_{0} & 0
\end{array}\right)\right)}^{-1}\left(\begin{array}{c}
f\\
0
\end{array}\right)\\
 & =\left(\partial_{t}\left(\begin{array}{cc}
1 & 0\\
0 & 0
\end{array}\right)+\left(\begin{array}{cc}
0 & 0\\
0 & 1
\end{array}\right)+\left(\begin{array}{cc}
0 & \dive\\
\grad_{0} & 0
\end{array}\right)\right)^{-1}\left(\begin{array}{c}
f\\
0
\end{array}\right)
\end{align*}
leading to the maximal regularity result mentioned at the beginning
for $F=(f,0)$ with $f\in L_{2}(0,T;L_{2}(\Omega))$ only.

Even though the class of equations treated in \cite{PTW2017_maxreg}
particularly contains integro-differential equations rendering rather
different equations to enjoy maximal regularity, the case of the heat
equation with non-symmetric but time-independent coefficients could
not be treated with the methods developed there. 

In this article we shall enlarge the class of coefficients $\mathcal{M}$
and $\mathcal{N}$ considerably leading to the equality highlighted
in \prettyref{prob:LionsEE}. In particular, this class will involve
non-symmetric conductivities in the case of the heat equation. What
is more, we shall show that the conditions might be weaker than the
conditions derived in both \cite{Auscher_Egert2016} or \cite{Dier_Zacher2017}
if applied to divergence form problems. Since we do not consider bilinear
forms as our central object of study, we do not invoke the Kato square
root property explicitly, which proved instrumental in the main result
in \cite{Achache2019}. In particular, our methods also apply irrespective
of the regularity of the considered underlying domains of the exemplarily
considered divergence form problems. Another key difference to the
results for non-autonomous maximal regularity available in the literature
is the possibility of the variable operator coefficient $\mathcal{M}$,
which permits us to consider integro-differential equations with the
same approach as classical Cauchy problems in divergence form. Moreover,
the operator coefficient $\mathcal{N}$ permits the introduction of
rough (in time) lower order terms. Before we present a plan of our
paper, we shortly describe the two main results and instrumental techniques
used in the present article. 

\prettyref{thm:main}, our first main result on maximal regularity
of evolutionary equations, in rough terms can be described as follows:
Well-posedness in $L_{2}$ and $H^{1/2}$ together with a parabolic
structure of $\mathcal{M},\mathcal{N}$ and $A$ imply maximal regularity
in the sense of \prettyref{prob:LionsEE} for $F=(f,g)\in L_{2}\times H^{1/2}$,
which in the standard heat equation case is satisfied as $g=0$ anyway.
For a proof of \prettyref{thm:main}, the framework of evolutionary
equations is particularly helpful since $\partial_{t}$ is continuously
invertible and normal yielding a handy decription of $H^{1/2}$ by
the functional calculus for $\partial_{t}$. The functional calculus
is provided with the help of the Fourier--Laplace transformation.
Note that the application of this functional calculus naturally leads
to the fractional Riemann--Liouville derivative, see also \cite{PTW2015_fractional,Diethelm2019}.
In applications, the conditions on the parabolic structure and the
well-posedness in $L_{2}$ are rather easy to show. The assumed positive
definitenss in $H^{1/2}$ leading to the respective well-posedness
result might be rather difficult to obtain, though. We emphasise,
however, that in addition to the various positive definiteness estimates,
we only need to assume that the involved coefficient operators $\mathcal{M}$
and $\mathcal{N}$ are bounded linear operators in $H^{1/2}$ thus
leaving this space invariant. In particular, \emph{no} bounded commutator
assumptions need to be imposed suggesting room for improvement along
the lines of the low regularity assumed for the coefficients in \cite{Achache2019}.
We shall not follow up on this but rather assume stronger commutator
assumptions on $\mathcal{M}$ and $\mathcal{N}$ with $\partial_{t}^{-1}$
and $\partial_{t}^{1/2}$, respectively, confirming the particular
role of commutator estimates for maximal regularity already observed
\cite{Auscher_Egert2016,Dier_Zacher2017}. Our second main theorem
on maximal regularity of evolutionary equations (\prettyref{thm:max_reg_commuator})
imposes the same parabolic structure assumption and well-posedness-in-$L_{2}$-requirement
as \prettyref{thm:main}. The conditions on the commutators then lead
to the asked for well-posedness in $H^{1/2}$ of \prettyref{thm:main}
via a perturbation argument. 

For divergence form problems, the assumptions in \prettyref{thm:max_reg_commuator}
are implied by the fractional Sobolev (or BMO)-regularity properties
imposed in either \cite{Auscher_Egert2016} or \cite{Dier_Zacher2017}.
This provides a way of classifying the a priori not comparable conditions
in \cite{Auscher_Egert2016} and \cite{Dier_Zacher2017}. Furthermore,
we recover an analogous regularity phenomenon first observed in \cite{Dier_Zacher2017}
and confirmed in \cite{Auscher_Egert2016} of the solution belonging
to $H^{1/2}$ -time regularity taking values in the form domain. 

In the next section, we recall the framework of evolutionary equations
and highlight the main ingredients of the non-autonomous solution
theory in $L_{2}$ as well as some facts of the (time) derivative
established in vector-valued exponentially weighted $L_{2}$-spaces.
This particularly includes the spectral representation and the accompanying
functional calculus. In \prettyref{sec:The-solution-theory H12},
we provide a necessary new technical result, \prettyref{thm:sol_theory_1/2},
which contains a solution theory for evolutionary equations in $H^{1/2}$.
Our first main result is presented and proved in \prettyref{sec:Maximal-regularity-forEE}.
The corresponding perturbation result with the mentioned commutator
assumptions is presented in \prettyref{sec:Applications}. Also, with
a focus on operator-valued multiplication operators, we analyse the
commutator condition imposed in \prettyref{thm:max_reg_commuator}
a bit more closely. We provide a proof of the results in \cite{Auscher_Egert2016,Dier_Zacher2017}
for divergence form problems with our methods in \prettyref{subsec:Divergence-form-equations}.
An example for integro-differential equations being a non-autonomous
variant of some equations considered in \cite{Trostorff2015} is presented
in \prettyref{subsec:Maximal-regularity-integro}. The last application
of our abstract findings is concerned with the (non-autonomous) eddy
current approximation for Maxwell's equations in \prettyref{subsec:Maxwell's-equations}.
We provide a small conclusion in \prettyref{sec:Conclusion}.

\section{The Framework\label{sec:The-Framework}}

We recall the framework of evolutionary equations. For more details
and the proofs we refer to \cite{Picard_McGhee2011,seifert2020evolutionary,W16_H}.
We start with the underlying Hilbert space setting and the definition
of the time derivative operator. 
\begin{defn*}
For $\rho\geq0$ we define the space 
\[
L_{2,\rho}(\R;H)\coloneqq\{f:\R\to H\,;\,f\text{ Bochner-measurable},\,\int_{\R}\|f(t)\|^{2}\e^{-2\rho t}\d t<\infty\},
\]
where we as usual identify functions which are equal almost everywhere.
This space is clearly a Hilbert space with respect to the inner product
\[
\langle f,g\rangle_{\rho,0}\coloneqq\int_{\R}\langle f(t),g(t)\rangle_{H}\e^{-2\rho t}\d t\quad(f,g\in L_{2,\rho}(\R;H)).
\]
Moreover, we define the operator $\partial_{t,\rho}:\dom(\partial_{t,\rho})\subseteq L_{2,\rho}(\R;H)\to L_{2,\rho}(\R;H)$
as the closure of the operator 
\[
C_{c}^{\infty}(\R;H)\subseteq L_{2,\rho}(\R;H)\to L_{2,\rho}(\R;H),\quad\phi\mapsto\phi',
\]
where $C_{c}^{\infty}(\R;H)$ denotes the space of arbitrarily differentiable
functions having compact support attaining values in $H$. Finally,
we define the \emph{Fourier--Laplace transformation }$\mathcal{L}_{\rho}:L_{2,\rho}(\R;H)\to L_{2}(\R;H)$
as the continuous extension of the mapping 
\[
C_{c}^{\infty}(\R;H)\subseteq L_{2,\rho}(\R;H)\to L_{2}(\R;H),\quad\phi\mapsto\left(t\mapsto\frac{1}{\sqrt{2\pi}}\int_{\R}\e^{-(\i t+\rho)s}\phi(s)\d s\right).
\]
\end{defn*}
We collect some properties of the so introduced operators.
\begin{prop}
\label{prop:elementary facts td}Let $\rho\geq0.$ 

\begin{enumerate}[(a)]

\item The operator $\partial_{t,\rho}$ is normal with $\Re\partial_{t,\rho}=\rho.$
Thus, in particular $\partial_{t,\rho}^{-1}\in L(L_{2,\rho}(\R;H))$
with $\|\partial_{t,\rho}^{-1}\|\leq\frac{1}{\rho}$ if $\rho\ne0.$
Moreover, for $\rho\ne0$ 
\[
\left(\partial_{t,\rho}^{-1}f\right)(t)=\int_{-\infty}^{t}f(s)\d s\quad(t\in\R,f\in L_{2,\rho}(\R;H)).
\]

\item The operator $\mathcal{L}_{\rho}$ is unitary and 
\[
\mathcal{L}_{\rho}\partial_{t,\rho}=(\i\m+\rho)\mathcal{L}_{\rho},
\]
where $\m:\dom(\m)\subseteq L_{2}(\R;H)\to L_{2}(\R;H)$ is given
by $\left(\m f\right)(t)=tf(t)$ for $t\in\R$ and $f\in\dom(\m)$
with maximal domain; that is, 
\[
\dom(\m)=\{f\in L_{2}(\R;H)\,;\,(t\mapsto tf(t))\in L_{2}(\R;H)\}.
\]
 In particular $\sigma(\partial_{t,\rho})=\{\i t+\rho\,;\,t\in\R\}$. 

\item As operators in $L_{2,\rho}(\R;H)$ we have 
\[
\partial_{t,\rho}^{*}=-\partial_{t,\rho}+2\rho
\]

\end{enumerate}
\end{prop}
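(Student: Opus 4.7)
The plan is to prove the three items in the order (b), (a), (c), with (b) being the main workhorse from which the others follow by the unitary equivalence it establishes.

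For (b), I would first introduce the multiplication-by-exponential operator $E_{-\rho}\colon L_{2,\rho}(\R;H)\to L_2(\R;H)$, $f\mapsto(t\mapsto\e^{-\rho t}f(t))$. A direct computation shows $E_{-\rho}$ is unitary. Since $E_{-\rho}$ maps $C_c^{\infty}(\R;H)$ bijectively onto itself (the support is preserved), and since for $\phi\in C_c^\infty(\R;H)$ one has $(E_{-\rho}\phi)'=E_{-\rho}\phi'-\rho E_{-\rho}\phi$, the identity $E_{-\rho}\partial_{t,\rho}E_{-\rho}^{-1}=\partial_{t,0}+\rho$ holds on the core $C_c^\infty(\R;H)$; taking closures and using that unitary conjugation preserves closures extends it to the domains. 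The standard vector-valued Fourier transform $\mathcal{F}\colon L_2(\R;H)\to L_2(\R;H)$ is unitary and satisfies $\mathcal{F}\partial_{t,0}\mathcal{F}^{-1}=\i\m$ by the usual integration-by-parts identity on Schwartz functions. Since $\mathcal{L}_\rho=\mathcal{F}E_{-\rho}$ on the common core $C_c^\infty(\R;H)$ and both sides agree with continuous extensions from this dense subspace, $\mathcal{L}_\rho$ is unitary and $\mathcal{L}_\rho\partial_{t,\rho}\mathcal{L}_\rho^{-1}=\i\m+\rho$.

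From (b), $\partial_{t,\rho}$ is unitarily equivalent to the normal multiplication operator $\i\m+\rho$, which is normal with real part $\rho$ and spectrum $\{\i t+\rho\,;\,t\in\R\}$; hence the corresponding assertions in (a) transfer verbatim. The norm bound $\|\partial_{t,\rho}^{-1}\|\leq 1/\rho$ for $\rho\neq 0$ follows immediately, since under $\mathcal{L}_\rho$ the inverse becomes multiplication by $(\i t+\rho)^{-1}$, whose essential supremum is $1/\rho$. For the integral representation of $\partial_{t,\rho}^{-1}$, I would verify that the operator $T\colon f\mapsto (t\mapsto\int_{-\infty}^t f(s)\d s)$ is well defined and bounded on $L_{2,\rho}(\R;H)$ by a Cauchy--Schwarz estimate against the weight $\e^{-\rho(t-s)}$, then check on $\phi\in C_c^\infty(\R;H)$ that $T\partial_{t,\rho}\phi=\phi$; density of $C_c^\infty(\R;H)$ in $\dom(\partial_{t,\rho})$ together with continuity of both sides identifies $T$ with $\partial_{t,\rho}^{-1}$.

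Finally, (c) is a purely formal consequence of (b): adjoints are preserved under unitary conjugation, and the multiplication operator satisfies $(\i\m+\rho)^*=-\i\m+\rho=-(\i\m+\rho)+2\rho$, so transporting this identity back via $\mathcal{L}_\rho$ gives $\partial_{t,\rho}^*=-\partial_{t,\rho}+2\rho$.

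The main obstacle I anticipate is the care needed in (b) to pass from the defining core $C_c^\infty(\R;H)$ to the full domain of $\partial_{t,\rho}$: one must argue that the closures taken in the two weighted spaces $L_{2,\rho}$ and $L_2$ are compatible under the unitary $E_{-\rho}$, and separately that $\mathcal{L}_\rho$, initially defined as a continuous extension, really agrees with $\mathcal{F}E_{-\rho}$ on the whole space. Once this is in place, the rest is routine transport of properties through a unitary equivalence.
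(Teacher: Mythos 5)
The paper itself gives no proof of this proposition---it is recalled as a known fact and the reader is sent to \cite{Picard_McGhee2011,seifert2020evolutionary,W16_H} for details---so there is no in-paper argument to compare against. Your proposal takes the standard and correct route: factor $\mathcal{L}_\rho=\mathcal{F}E_{-\rho}$ through the unitary multiplication by $\e^{-\rho\cdot}$, verify $E_{-\rho}\partial_{t,\rho}E_{-\rho}^{-1}=\partial_{t,0}+\rho$ on the core and pass to closures, and then read off (a) and (c) from the spectral representation $\i\m+\rho$, while identifying $\partial_{t,\rho}^{-1}$ with the integral operator via boundedness (a Young/Cauchy--Schwarz estimate with the kernel $\e^{-\rho(t-s)}\chi_{[0,\infty)}(t-s)$) plus agreement on the dense set $\partial_{t,\rho}[C_c^\infty(\R;H)]$. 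All of this is sound, and your concluding paragraph correctly flags the only places where care is needed.

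The one spot I would strengthen is the step $\mathcal{F}\partial_{t,0}\mathcal{F}^{-1}=\i\m$. The integration-by-parts identity on $C_c^\infty$ (or Schwartz) only shows that $\mathcal{F}\partial_{t,0}\mathcal{F}^{-1}$ is a closed operator \emph{contained in} the maximal multiplication operator $\i\m$; one still has to argue that the closure of the derivative on $C_c^\infty$ already has the full maximal domain (equivalently, that $\mathcal{F}^{-1}[\dom(\m)]=H^1(\R;H)$ is reached by $C_c^\infty$ in graph norm, which is the usual mollify-and-cut-off argument, or alternatively that a proper closed restriction of the skew-adjoint $\i\m$ cannot itself be skew-adjoint while $\partial_{t,0}$ is). This is a textbook fact and does not undermine the proposal, but since the proposition explicitly asserts $\m$ has \emph{maximal} domain, it is the one ingredient you should not leave implicit.
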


With the help of the unitary equivalence of the operators $\partial_{t,\rho}$
and $\i\m+\rho$ we can also define derivatives of fractional order
(see \cite{PTW2015_fractional,seifert2020evolutionary,Diethelm2019}).
\begin{prop}
\label{prop:fractional_integral}Let $\rho>0$ and $\alpha\in\R$
and set 
\[
\partial_{t,\rho}^{\alpha}\coloneqq\mathcal{L}_{\rho}^{\ast}(\i\m+\rho)^{\alpha}\mathcal{L}_{\rho}.
\]
Then $\partial_{t,\rho}^{\alpha}$ is densely defined and closed on
$L_{2,\rho}(\R;H)$ and if $\alpha\leq0,$ it is bounded with $\|\partial_{t,\rho}^{\alpha}\|\leq\frac{1}{\rho^{\alpha}}.$
Moreover, for $\alpha>0$ we have $\Re\partial_{t,\rho}^{\alpha}\geq\rho^{\alpha}$
and the operator $\partial_{t,\rho}^{-\alpha}$ is given by 
\[
\left(\partial_{t,\rho}^{-\alpha}f\right)(t)=\frac{1}{\Gamma(\alpha)}\int_{-\infty}^{t}(t-s)^{\alpha-1}f(s)\d s\quad(t\in\R,f\in L_{2,\rho}(\R;H)).
\]
\end{prop}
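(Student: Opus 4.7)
The plan is to exploit the unitary equivalence established in \prettyref{prop:elementary facts td}(b) to transfer every claim to the multiplication operator $(\i\m+\rho)^{\alpha}$ on $L_{2}(\R;H)$. Since $\mathcal{L}_{\rho}$ is unitary, $\partial_{t,\rho}^{\alpha}=\mathcal{L}_{\rho}^{*}(\i\m+\rho)^{\alpha}\mathcal{L}_{\rho}$ inherits closedness, dense definedness, boundedness and any lower bound of the numerical range from the corresponding multiplication operator. Hence I would begin by noting that $(\i t+\rho)^{\alpha}$ is a measurable, everywhere finite function of $t\in\R$ (using the principal branch, which is well-defined since $\rho>0$ puts $\i t+\rho$ in the open right half-plane), and standard spectral theory then gives that the induced multiplication operator is closed, densely defined, with dense core $\{f\in L_{2}(\R;H): (t\mapsto(\i t+\rho)^{\alpha}f(t))\in L_{2}(\R;H)\}$.

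For the norm bound when $\alpha\le 0$, I would use the pointwise estimate
\[
|(\i t+\rho)^{\alpha}|=(t^{2}+\rho^{2})^{\alpha/2}\le \rho^{\alpha}
\]
which holds because $t^{2}+\rho^{2}\ge\rho^{2}$ and the exponent $\alpha/2$ is non-positive; the bound on the multiplication operator, and hence on $\partial_{t,\rho}^{\alpha}$, follows at once. For the lower bound $\Re\partial_{t,\rho}^{\alpha}\ge\rho^{\alpha}$ in the case $\alpha>0$, I would again reduce to the pointwise inequality $\Re(\i t+\rho)^{\alpha}\ge\rho^{\alpha}$ for all $t\in\R$. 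Writing $\i t+\rho=re^{\i\theta}$ with $\theta\in(-\pi/2,\pi/2)$ and $r\cos\theta=\rho$, this rearranges to $\cos(\alpha\theta)\ge\cos^{\alpha}\theta$, which is a standard elementary inequality that I would verify by showing both sides agree at $\theta=0$ and comparing derivatives.

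The final assertion, the Riemann--Liouville representation of $\partial_{t,\rho}^{-\alpha}$, is the part that requires real work. My plan is to identify the convolution kernel $g_{\alpha}(s)\coloneqq\frac{1}{\Gamma(\alpha)}s^{\alpha-1}\mathbf{1}_{s>0}$ and to use the classical gamma integral
\[
\int_{0}^{\infty}e^{-(\i t+\rho)s}\frac{s^{\alpha-1}}{\Gamma(\alpha)}\d s=(\i t+\rho)^{-\alpha}\qquad(\rho>0,\alpha>0)
\]
to show that the Fourier--Laplace transform turns convolution with $g_{\alpha}$ into multiplication by $(\i t+\rho)^{-\alpha}$. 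For $f\in C_{c}^{\infty}(\R;H)$, the convolution $(g_{\alpha}*f)(t)=\frac{1}{\Gamma(\alpha)}\int_{-\infty}^{t}(t-s)^{\alpha-1}f(s)\d s$ is well-defined pointwise and, via the transform identity, coincides with $\partial_{t,\rho}^{-\alpha}f$ on this dense subset; boundedness of $\partial_{t,\rho}^{-\alpha}$ then extends the identity to all of $L_{2,\rho}(\R;H)$.

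The main obstacle is the convolution step: for small $\alpha$ the kernel $g_{\alpha}$ is singular at the origin and for large $\alpha$ it has slow decay, so I would need a short argument (e.g.\ splitting the integral near $0$ and using Young/Hardy-type estimates in the weighted $L_{2,\rho}$-norm together with $C_{c}^{\infty}$-density) to justify that the convolution is well-defined as an operator on a dense subset and that the transform interchanges with the integral. Once this is in place, identifying the resulting multiplier with $(\i t+\rho)^{-\alpha}$ via the gamma integral closes the argument.
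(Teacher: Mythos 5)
Your overall route — conjugate by the unitary $\mathcal{L}_{\rho}$ and argue entirely for the multiplication operator by $(\i t+\rho)^{\alpha}$ — is the standard one (the paper itself gives no proof of this proposition and simply cites \cite{PTW2015_fractional,seifert2020evolutionary,Diethelm2019}, which argue exactly along these lines). The closedness, dense definedness, and the bound $\|\partial_{t,\rho}^{\alpha}\|\le\rho^{\alpha}$ for $\alpha\le0$ follow cleanly from the pointwise estimate $|(\i t+\rho)^{\alpha}|=(t^{2}+\rho^{2})^{\alpha/2}\le\rho^{\alpha}$, as you say.

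Two points need attention. First, the pointwise inequality $\Re(\i t+\rho)^{\alpha}\ge\rho^{\alpha}$, equivalently $\cos(\alpha\theta)\ge\cos^{\alpha}\theta$ with $\theta\in(-\pi/2,\pi/2)$, is \emph{not} a standard inequality valid for all $\alpha>0$: already at $\alpha=2$ one has $\Re(\i t+\rho)^{2}=\rho^{2}-t^{2}$, which drops below $\rho^{2}$. Your own derivative comparison in fact exposes this: you need $\sin(\alpha\theta)\le\cos^{\alpha-1}\theta\sin\theta$, which holds because $\cos^{\alpha-1}\theta\ge1$ and $\sin(\alpha\theta)\le\sin\theta$ only when $\alpha\le1$. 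So the lower bound $\Re\partial_{t,\rho}^{\alpha}\ge\rho^{\alpha}$ should be proved (and in fact only holds) for $\alpha\in(0,1]$, which is all the paper ever uses; stating it for all $\alpha>0$, as both you and the proposition do, is an overreach. Second, the convolution step is less of an obstacle than you suggest: since
\[
\e^{-\rho t}\big(g_{\alpha}*f\big)(t)=\big((g_{\alpha}\e^{-\rho\cdot})*(f\e^{-\rho\cdot})\big)(t),\qquad g_{\alpha}(s)=\tfrac{1}{\Gamma(\alpha)}s^{\alpha-1}\mathbf{1}_{s>0},
\]
and $\int_{0}^{\infty}\frac{s^{\alpha-1}}{\Gamma(\alpha)}\e^{-\rho s}\d s=\rho^{-\alpha}<\infty$ for every $\alpha>0$, Young's inequality gives boundedness of the convolution on $L_{2,\rho}(\R;H)$ directly, with no case split between singular-at-$0$ and slowly-decaying kernels and no need for a density argument. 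One then applies $\mathcal{L}_{\rho}$ to both sides for $f\in C_{c}^{\infty}(\R;H)$ (where Fubini is trivially justified) and identifies the multiplier as $(\i t+\rho)^{-\alpha}$ via the gamma integral you quote, after which the identity extends to all of $L_{2,\rho}(\R;H)$ by boundedness of both sides.
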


With the help of these operators, we can define the fractional Sobolev
spaces with respect to the exponentially weighted Lebesgue-measure.
\begin{defn*}
Let $\rho>0$ and $\alpha\geq0.$ Then we set 
\[
H_{\rho}^{\alpha}(\R;H)\coloneqq\dom(\partial_{t,\rho}^{\alpha})
\]
and equip it with the norm (note that $\partial_{t,\rho}^{\alpha}$
is injective)
\[
\|u\|_{\rho,\alpha}\coloneqq\|\partial_{t,\rho}^{\alpha}u\|_{\rho,0}\quad(u\in H_{\rho}^{\alpha}(\R;H)).
\]
\end{defn*}
The following proposition is an immediate consequence of the definitions
above.
\begin{prop}
Let $\rho>0$ and $\alpha\geq0$. Then the following statements hold

\begin{enumerate}[(a)]

\item For each $0\leq\beta\leq\alpha$ the operator $\partial_{t,\rho}^{\beta}:H_{\rho}^{\alpha}(\R;H)\to H_{\rho}^{\alpha-\beta}(\R;H)$
is unitary.

\item The operator $\mathcal{L}_{\rho}:H_{\rho}^{\alpha}(\R;H)\to H^{\alpha}(\i\m+\rho)$
is unitary. Here, $H^{\alpha}(\i\m+\rho)=\{u\in L_{2}(\R;H)\,;\,\left(t\mapsto(\i t+\rho)^{\alpha}u(t)\right)\in L_{2}(\R;H)\}$
equipped with the norm $\|u\|_{H^{\alpha}(\i\m+\rho)}\coloneqq\|(\i\m+\rho)^{\alpha}u\|_{L_{2}(\R;H)}.$

\end{enumerate}
\end{prop}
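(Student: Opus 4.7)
The plan is to reduce everything to the unitarily equivalent picture on the Fourier--Laplace side, where the operators $\partial_{t,\rho}^{\alpha}$ become the multiplication operators $(\i\m+\rho)^{\alpha}$ on $L_{2}(\R;H)$, and then exploit the trivial multiplicative identity $(\i t+\rho)^{\alpha-\beta}(\i t+\rho)^{\beta}=(\i t+\rho)^{\alpha}$ that holds pointwise on $\R$.

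I would begin with part (b), which is the more primitive statement. By the very definition $\partial_{t,\rho}^{\alpha}=\mathcal{L}_{\rho}^{\ast}(\i\m+\rho)^{\alpha}\mathcal{L}_{\rho}$ and the unitarity of $\mathcal{L}_{\rho}$, one has $u\in\dom(\partial_{t,\rho}^{\alpha})$ if and only if $\mathcal{L}_{\rho}u\in\dom((\i\m+\rho)^{\alpha})=H^{\alpha}(\i\m+\rho)$, and on this domain
\[
\|\mathcal{L}_{\rho}u\|_{H^{\alpha}(\i\m+\rho)}=\|(\i\m+\rho)^{\alpha}\mathcal{L}_{\rho}u\|_{L_{2}(\R;H)}=\|\mathcal{L}_{\rho}\partial_{t,\rho}^{\alpha}u\|_{L_{2}(\R;H)}=\|\partial_{t,\rho}^{\alpha}u\|_{\rho,0}=\|u\|_{\rho,\alpha}.
\]
This already shows the map is well-defined and isometric; surjectivity follows because for any $v\in H^{\alpha}(\i\m+\rho)$ the element $u\coloneqq\mathcal{L}_{\rho}^{\ast}v$ satisfies $\partial_{t,\rho}^{\alpha}u=\mathcal{L}_{\rho}^{\ast}(\i\m+\rho)^{\alpha}v\in L_{2,\rho}(\R;H)$, hence $u\in H_{\rho}^{\alpha}(\R;H)$.

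For part (a), I would compose the unitary of part (b) applied at the exponents $\alpha$ and $\alpha-\beta$ with the obvious unitary on the multiplication side. Concretely, the pointwise identity above shows that the multiplication operator $(\i\m+\rho)^{\beta}$ is a unitary isomorphism $H^{\alpha}(\i\m+\rho)\to H^{\alpha-\beta}(\i\m+\rho)$: isometry is a direct verification, and surjectivity follows by dividing by $(\i t+\rho)^{\beta}$, which is well-defined and $L_{\infty}$-bounded on $\R$ in the sense that it only improves integrability at infinity when $\beta\geq 0$. Transporting this back via $\mathcal{L}_{\rho}$, using $\mathcal{L}_{\rho}\partial_{t,\rho}^{\beta}=(\i\m+\rho)^{\beta}\mathcal{L}_{\rho}$ on the domain of $\partial_{t,\rho}^{\beta}$, gives the claimed unitarity of $\partial_{t,\rho}^{\beta}:H_{\rho}^{\alpha}(\R;H)\to H_{\rho}^{\alpha-\beta}(\R;H)$.

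The only genuinely non-routine point to be careful about is the composition identity $\partial_{t,\rho}^{\alpha-\beta}\partial_{t,\rho}^{\beta}=\partial_{t,\rho}^{\alpha}$ with the correct domain, which is used implicitly when asserting that the image of $\partial_{t,\rho}^{\beta}$ lies in $\dom(\partial_{t,\rho}^{\alpha-\beta})$ and that its norm agrees with $\|u\|_{\rho,\alpha}$. This, however, is immediate from the functional calculus via $\mathcal{L}_{\rho}$ together with the pointwise equality $(\i t+\rho)^{\alpha-\beta}(\i t+\rho)^{\beta}=(\i t+\rho)^{\alpha}$ on $\R$, so nothing beyond bookkeeping is required. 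I do not anticipate any serious obstacle; the statement is really a spectral-theoretic reformulation of \prettyref{prop:elementary facts td} and \prettyref{prop:fractional_integral}.
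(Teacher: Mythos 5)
Your argument is correct and is precisely the bookkeeping the paper regards as an ``immediate consequence of the definitions'': you unwind $\partial_{t,\rho}^{\alpha}=\mathcal{L}_{\rho}^{\ast}(\i\m+\rho)^{\alpha}\mathcal{L}_{\rho}$ to get (b) and then use the pointwise multiplicativity of $(\i t+\rho)^{\alpha}$ to get (a). Since the paper gives no proof for this proposition, there is nothing to compare against beyond the definitions; your account is a faithful spelling-out of exactly that.
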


\begin{rem}
\label{rem:interpolation}(a) For $\theta\in]0,1[$ the space $H_{\rho}^{\theta}(\R;H)$
can also be obtained by complex interpolation. More precisely, we
have
\begin{equation}
H_{\rho}^{\theta}(\R;H)=(L_{2,\rho}(\R;H),H_{\rho}^{1}(\R;H))_{\theta}\label{eq:HthetaIS}
\end{equation}
isometrically. For the theory of interpolation spaces we refer to
\cite{Bergh_Lofstrom1976,Lunardi2018}. To see \prettyref{eq:HthetaIS},
we consider the unitarily transformed space $H^{\theta}(\i\m+\rho)$
first and show
\[
H^{\theta}(\i\m+\rho)=(L_{2}(\R;H),H^{1}(\i\m+\rho))_{\theta}
\]
isometrically. Indeed, for $u\in H^{\theta}(\i\m+\rho)$ we define
the function 
\begin{align*}
f_{u}:S & \to L_{2}(\R;H)\\
z & \mapsto(t\mapsto u(t)|\i t+\rho|^{\theta-z}).
\end{align*}
Here $S\coloneqq\{z\in\C\,;\,\Re z\in[0,1]\}.$ Note that $f_{u}$
is well-defined and bounded, since 
\[
\int_{\R}|f_{u}(z)(t)|^{2}\d t=\int_{\R}|u(t)(\i t+\rho)^{\theta}|^{2}|\i t+\rho|^{-2\Re z}\d t\leq\|u\|_{H^{\theta}(\i\m+\rho)}^{2}\max\{\rho^{-2},1\}.
\]
Moreover, $f_{u}$ is holomorphic in the interior of $S$, $f_{u}(\i\xi)\in L_{2}(\R;H)$
with 
\[
\|f_{u}(\i\xi)\|_{L_{2}(\R;H)}=\|u\|_{H^{\theta}(\i\m+\rho)},
\]
 and $f_{u}(\i\xi+1)\in H^{1}(\i\m+\rho)$ with 
\[
\|f_{u}(\i\xi+1)\|_{H^{1}(\i\m+\rho)}=\|u\|_{H^{\theta}(\i\m+\rho)}
\]
 for each $\xi\in\R.$ Since $f_{u}(\theta)=u$ this implies $u\in(L_{2}(\R;H),H_{\rho}^{1}(\i\m+\rho))_{\theta}$
with 
\[
\|u\|_{(L_{2}(\R;H),H_{\rho}^{1}(\i\m+\rho))_{\theta}}\leq\|u\|_{H^{\theta}(\i\m+\rho)}.
\]
For showing the converse inclusion and norm inequality, let $u\in(L_{2}(\R;H),H_{\rho}^{1}(\i\m+\rho))_{\theta}$
and $g:S\to L_{2}(\R;H)+H^{1}(\i\m+\rho)$ continuous, holomorphic
in the interior of $S$ and bounded with $g(\i\xi)\in L_{2}(\R;H)$
and $g(\i\xi+1)\in H^{1}(\i\m+\rho)$ such that $g(\theta)=u.$ To
prove that $u\in H^{\theta}(\i\m+\rho)$ it suffices to show that
\[
H_{c}^{\theta}(\i\m+\rho)\ni v\mapsto\int_{\R}\langle u(t),v(t)\rangle_{H}|\i t+\rho|^{2\theta}\d t
\]
defines a bounded functional on $H^{\theta}(\i\m+\rho)$, where $H_{c}^{\theta}(\i\m+\rho)$
denotes the elements in $H^{\theta}(\i\m+\rho)$ having compact support.
For this, let $v\in H_{c}^{\theta}(\i\m+\rho)$ and set $f_{v}:S\to L_{2}(\R;H)$
as above and consider the function 
\begin{align*}
F:S & \to\C\\
z & \mapsto\int_{\R}\langle g(z)(t),f_{v}(z)(t)\rangle_{H}|\i t+\rho|^{2z}\d t.
\end{align*}
Note that this integral is well-defined since $f_{v}(z)\in L_{2}(\R;H)$
is compactly supported for each $z\in S$. The so defined $F$ is
continuous, holomorphic in the interior of $S$ and bounded and thus,
\[
|F(z)|\leq\sup_{\xi\in\R}\{|F(\i\xi)|,|F(\i\xi+1)|\}\quad(z\in S)
\]
by the maximum principle. For $\xi\in\R$ we estimate 
\begin{align*}
|F(\i\xi)| & \leq\|g(\i\xi)\|_{L_{2}(\R;H)}\|f_{v}(\i\xi)\|_{L_{2}(\R;H)}\\
 & =\|g(\i\xi)\|_{L_{2}(\R;H)}\|v\|_{H^{\theta}(\i\m+\rho)}\text{ and }\\
|F(\i\xi+1)| & \leq\|g(\i\xi+1))\|_{H^{1}(\i\m+\rho)}\|f_{v}(\i\xi+1)\|_{H^{1}(\i\m+\rho)}\\
 & =\|g(\i\xi+1))\|_{H^{1}(\i\m+\rho)}\|v\|_{H^{\theta}(\i\m+\rho)}.
\end{align*}
Thus, we obtain
\begin{align*}
\left|\int_{\R}\langle u(t),v(t)\rangle_{H}|\i t+\rho|^{2\theta}\d t\right| & =|F(\theta)| \\
 &\leq\sup_{\xi\in\R}\{\|g(\i\xi)\|_{L_{2}(\R;H)},\|g(\i\xi+1))\|_{H^{1}(\i\m+\rho)}\}\|v\|_{H^{\theta}(\i\m+\rho)}.
\end{align*}
Taking now the infimum over all $g$ with the desired properties,
we get 
\[
\left|\int_{\R}\langle u(t),v(t)\rangle_{H}|\i t+\rho|^{2\theta}\d t\right|\leq\|u\|_{(L_{2}(\R;H),H^{1}(\i\m+\rho))_{\theta}}\|v\|_{H^{\theta}(\i\m+\rho)},
\]
which yields $u\in H^{\theta}(\i\m+\rho)$ with $\|u\|_{H^{\theta}(\i\m+\rho)}\leq\|u\|_{(L_{2}(\R;H),H^{1}(\i\m+\rho))_{\theta}}.$
\\
Finally, using that $\mathcal{L}_{\rho}:H_{\rho}^{\theta}(\R;H)\to H^{\theta}(\i\m+\rho)$
is unitary for each $\theta\in[0,1],$ we obtain the assertion.

(b) Let $\mathcal{M}\in L(L_{2,\rho}(\mathbb{R};H))\cap L(H_{\rho}^{1}(\mathbb{R};H)).$
Then, for all $\theta\in[0,1]$, by (a), $\mathcal{M}\in L(H_{\rho}^{\theta}(\mathbb{R};H))$;
see also \cite[Theorem 2.6]{Lunardi2018}.
\end{rem}

\begin{rem}
\label{rem:FourierIP}(a) For the next result, we briefly recall that
by complex interpolation and Plancherel's theorem, we have that the
Fourier transformation extends to be a continuous operator 
\[
\mathcal{F}\colon L_{p'}(\R;H)\to L_{p}(\R;H),
\]
where $\frac{1}{p'}+\frac{1}{p}=1$ with $1<p'<2<p<\infty$. Indeed,
this follows from the fact that $\mathcal{F}\colon L_{2}(\R;H)\to L_{2}(\R;H)$
and $\mathcal{F}\colon L_{1}(\mathbb{R};H)\to L_{\infty}(\R;H)$ are
unitary and continuous, respectively. Note that this applies verbatim
to $\mathcal{F}^{*}$. 

(b) Furthermore, we recall the following version of Hölder's inequality:
If $f\in L_{p}(\R)$ and $g\in L_{q}(\R;X)$, $X$ a Banach space,
and $\frac{1}{p}+\frac{1}{q}=\frac{1}{r}$ for some $p,q,r\in[1,\infty],$
then $t\mapsto f(t)g(t)\in L_{r}(\R;X)$ and
\[
\|fg\|_{L_{r}}\leq\|f\|_{L_{p}}\|g\|_{L_{q}}.
\]
\end{rem}

\begin{lem}
\label{lem:Sobolev}Let $\rho>0$ and $\alpha\in]0,1/2]$. Then for
each $p\in[2,\frac{2}{1-2\alpha}[$ (where we set $\frac{2}{0}\coloneqq\infty$)
$H_{\rho}^{\alpha}(\R;H)\hookrightarrow L_{p,\rho}(\R;H)$, where
\[
L_{p,\rho}(\R;H)\coloneqq\{f:\R\to H\,;\,f\text{ measurable, }\int_{\R}\|f(t)\|^{p}\e^{-p\rho t}\d t<\infty\}
\]
equipped with the obvious norm, denoted by $\|\cdot\|_{L_{p,\rho}}$. 
\end{lem}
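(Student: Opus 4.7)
The plan is to transfer the estimate to the Fourier side and then combine Hausdorff--Young with H\"older. Concretely, note that for $\phi \in C_c^\infty(\mathbb{R};H)$ one has
\[
(\mathcal{L}_\rho \phi)(t) = \frac{1}{\sqrt{2\pi}}\int_\mathbb{R} \e^{-\i t s}\bigl(\e^{-\rho s}\phi(s)\bigr)\d s = \mathcal{F}\bigl(\e^{-\rho\cdot}\phi\bigr)(t),
\]
so by density we obtain $\e^{-\rho \cdot} u = \mathcal{F}^{*}(\mathcal{L}_\rho u)$ in $L_2(\mathbb{R};H)$ for every $u\in L_{2,\rho}(\mathbb{R};H)$. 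Hence $\|u\|_{L_{p,\rho}} = \|\mathcal{F}^{*}(\mathcal{L}_\rho u)\|_{L_p}$. The case $p = 2$ is immediate from $\|\partial_{t,\rho}^{-\alpha}\| \leq \rho^{-\alpha}$ (see \prettyref{prop:fractional_integral}), so we focus on $p > 2$.

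For $p > 2$, let $p'$ be the conjugate exponent, so $p' \in ]1,2[$. By the vector-valued Hausdorff--Young inequality (\prettyref{rem:FourierIP}(a)) we have
\[
\|u\|_{L_{p,\rho}} = \|\mathcal{F}^{*}(\mathcal{L}_\rho u)\|_{L_p(\mathbb{R};H)} \leq C \|\mathcal{L}_\rho u\|_{L_{p'}(\mathbb{R};H)}.
\]
Next I would factor $\mathcal{L}_\rho u(t) = (\i t+\rho)^{-\alpha}\cdot (\i t+\rho)^{\alpha}(\mathcal{L}_\rho u)(t)$ and apply the scalar-times-vector H\"older inequality (\prettyref{rem:FourierIP}(b)) with the exponent pair $(s,2)$ determined by $\frac{1}{s}+\frac{1}{2} = \frac{1}{p'}$, i.e.\ $s = \frac{2p}{p-2}$. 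This yields
\[
\|\mathcal{L}_\rho u\|_{L_{p'}} \leq \bigl\||\i\cdot+\rho|^{-\alpha}\bigr\|_{L_s(\mathbb{R})}\,\|(\i\m+\rho)^{\alpha}\mathcal{L}_\rho u\|_{L_2(\mathbb{R};H)} = \bigl\||\i\cdot+\rho|^{-\alpha}\bigr\|_{L_s(\mathbb{R})}\,\|u\|_{\rho,\alpha},
\]
where the final equality uses that $\mathcal{L}_\rho : H_\rho^\alpha(\mathbb{R};H)\to H^\alpha(\i\m+\rho)$ is unitary.

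It remains to verify finiteness of $\bigl\||\i\cdot+\rho|^{-\alpha}\bigr\|_{L_s(\mathbb{R})}$. Since $|\i t+\rho|^{-\alpha s} = (t^2+\rho^2)^{-\alpha s/2}$ is integrable on $\mathbb{R}$ iff $\alpha s > 1$, we need $\frac{2\alpha p}{p-2}>1$, which rearranges to $p(1-2\alpha)<2$, i.e.\ $p<\frac{2}{1-2\alpha}$ (and to any $p<\infty$ if $\alpha=1/2$). This is precisely the range assumed in the statement, so combining the inequalities proves the embedding. No step is a genuine obstacle; the only point requiring care is tracking the H\"older exponent so that the integrability threshold matches the claimed range of $p$.
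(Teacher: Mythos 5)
Your proof is correct and takes essentially the same route as the paper: first $\|u\|_{L_{p,\rho}} = \|\mathcal{F}^*\mathcal{L}_\rho u\|_{L_p} \lesssim \|\mathcal{L}_\rho u\|_{L_{p'}}$ by vector-valued Hausdorff--Young, then the factorization $\mathcal{L}_\rho u = (\i\m+\rho)^{-\alpha}(\i\m+\rho)^\alpha\mathcal{L}_\rho u$ combined with H\"older for the pair $(s,2)$ with $\frac{1}{s}+\frac{1}{2}=\frac{1}{p'}$, ending in the same integrability threshold $\alpha s > 1$. Your exponent $s = \frac{2p}{p-2}$ coincides with the paper's $q = \frac{2p'}{2-p'}$, so there is no substantive difference.
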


\begin{proof}
For $p=2$ there is nothing to show. Let $p\in]2,\frac{2}{1-2\alpha}[$
and $p'\in]\frac{2}{1+2\alpha},2[$ denote the conjugate exponent
to $p$; i.e. $\frac{1}{p}+\frac{1}{p'}=1$. From \prettyref{rem:FourierIP},
we know that $\mathcal{F}^{*}:L_{p'}\to L_{p}$ is continuous. Hence,
for $u\in C_{c}^{\infty}(\mathbb{R};H)$ we estimate
\begin{align*}
\|u\|_{L_{p,\rho}} & =\|\e^{-\rho\cdot}u\|_{L_{p}}\\
 & =\|\mathcal{F}^{*}\mathcal{L}_{\rho}u\|_{L_{p}}\\
 & \lesssim\|\mathcal{L}_{\rho}u\|_{L_{p'}}.
\end{align*}
Next, let $q\coloneqq\frac{2p'}{2-p'}.$ Then by Hölder's inequality
\begin{align*}
\|\mathcal{L}_{\rho}u\|_{L_{p'}} & =\|(\i\m+\rho)^{-\alpha}(\i\m+\rho)^{\alpha}\mathcal{L}_{\rho}u\|_{L_{p'}}\\
 & \leq\|(\i\m+\rho)^{-\alpha}\|_{L_{q}}\|(\i\m+\rho)^{\alpha}\mathcal{L}_{\rho}u\|_{L_{2}}.
\end{align*}
Note that 
\[
\|(\i\m+\rho)^{-\alpha}\|_{q}=\left(\int_{\R}\frac{1}{(t^{2}+\rho^{2})^{-\frac{\alpha q}{2}}}\d t\right)^{\frac{1}{q}}<\infty,
\]
since $q=\frac{2p'}{2-p'}=\frac{2}{(2/p')-1}>\frac{1}{\alpha}$ and
hence, the claim follows.
\end{proof}
The next statement contains an approximation result, which has been
employed in \cite{Waurick2015,PTW2017_maxreg} for the particular
case $\alpha=0$. To have a corresponding result for the case when
$\alpha>0$ (and particularly when $\alpha=1/2$), will turn out to
be useful in the next section, where we provide a well-posedness result
for evolutionary equations in $H_{\rho}^{1/2}(\R;H)$.
\begin{lem}
\label{lem:approx}Let $\rho>0$ and $\alpha\geq0.$ We consider the
time derivative operator on $H_{\rho}^{\alpha}(\R;H)$; that is, 
\[
\partial_{t,\rho}:H_{\rho}^{\alpha+1}(\R;H)\subseteq H_{\rho}^{\alpha}(\R;H)\to H_{\rho}^{\alpha}(\R;H).
\]
Then for each $\varepsilon>0$ the operator $1+\varepsilon\partial_{t,\rho}$
is continuously invertible on $H_{\rho}^{\alpha}(\R;H)$ and $(1+\varepsilon\partial_{t,\rho})^{-1}\to1$
strongly in $H_{\rho}^{\alpha}(\R;H)$ as $\varepsilon\to0.$ 
\end{lem}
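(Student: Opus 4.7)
The plan is to reduce the claim to a statement about a scalar multiplication operator by means of the spectral representation of $\partial_{t,\rho}$. By the proposition on fractional Sobolev spaces stated above, $\mathcal{L}_{\rho}$ is a unitary map from $H_{\rho}^{\alpha}(\R;H)$ onto $H^{\alpha}(\i\m+\rho)$ intertwining $\partial_{t,\rho}$ with multiplication by $\i\m+\rho$. Thus it suffices to analyse the multiplication operator $1+\varepsilon(\i\m+\rho)$ on $H^{\alpha}(\i\m+\rho)$, where everything becomes an elementary estimate on a scalar symbol.

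For the continuous invertibility, I would observe that
\[
|1+\varepsilon(\i t+\rho)|\geq\Re(1+\varepsilon(\i t+\rho))=1+\varepsilon\rho\geq 1\qquad(t\in\R),
\]
so multiplication by the bounded scalar function $g_{\varepsilon}(t)\coloneqq 1/(1+\varepsilon(\i t+\rho))$ is a contraction on $L_{2}(\R;H)$. Being scalar-valued, $g_{\varepsilon}$ commutes with multiplication by $(\i t+\rho)^{\alpha}$, so it also acts as a contraction on $H^{\alpha}(\i\m+\rho)$, and transporting back through $\mathcal{L}_{\rho}^{\ast}$ yields $(1+\varepsilon\partial_{t,\rho})^{-1}$ with norm at most one. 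For the strong convergence, fix $u\in H_{\rho}^{\alpha}(\R;H)$ and set $v\coloneqq\mathcal{L}_{\rho}u\in H^{\alpha}(\i\m+\rho)$; by unitarity
\begin{align*}
\|(1+\varepsilon\partial_{t,\rho})^{-1}u-u\|_{\rho,\alpha}^{2}=\int_{\R}|\i t+\rho|^{2\alpha}\left|\frac{1}{1+\varepsilon(\i t+\rho)}-1\right|^{2}\|v(t)\|_{H}^{2}\,\d t.
\end{align*}
The integrand tends to $0$ pointwise as $\varepsilon\to 0$ and is dominated by $4|\i t+\rho|^{2\alpha}\|v(t)\|_{H}^{2}$, which is integrable because $v\in H^{\alpha}(\i\m+\rho)$; Lebesgue's dominated convergence theorem then delivers the conclusion.

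There is no substantial obstacle once the spectral representation is invoked, since the whole argument is a routine functional-calculus manipulation. The one point warranting attention is that the multiplier $g_{\varepsilon}$ is not merely bounded but uniformly bounded by one independently of $\varepsilon$, as this single observation both secures the uniform bound on $(1+\varepsilon\partial_{t,\rho})^{-1}$ in the operator norm on $H_{\rho}^{\alpha}(\R;H)$ and simultaneously provides the integrable majorant required to invoke dominated convergence.
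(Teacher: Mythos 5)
Your argument is correct, but it takes a genuinely different route from the paper's. The paper proves the lemma abstractly: it first establishes the accretivity estimate $\Re\langle(1+\varepsilon\partial_{t,\rho})u,u\rangle_{\rho,\alpha}\geq\|u\|_{\rho,\alpha}^{2}$ on $H_{\rho}^{\alpha+1}(\R;H)$, deduces injectivity, closed range and a norm-one inverse on that range, then computes the adjoint $\partial_{t,\rho}^{\ast}=-\partial_{t,\rho}+2\rho$ on $H_{\rho}^{\alpha}(\R;H)$ to show that $1+\varepsilon\partial_{t,\rho}^{\ast}$ is likewise injective, whence the range is dense; strong convergence is then obtained on the dense subspace $H_{\rho}^{\alpha+1}(\R;H)$ and extended via the uniform bound. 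You instead diagonalise $\partial_{t,\rho}$ through $\mathcal{L}_{\rho}$ and reduce everything to the scalar multiplier $g_{\varepsilon}(t)=(1+\varepsilon(\i t+\rho))^{-1}$, noting $|g_{\varepsilon}(t)|\le 1$ and that $g_{\varepsilon}$, being scalar, commutes with $(\i\m+\rho)^{\alpha}$, so the same contraction bound holds in $H^{\alpha}(\i\m+\rho)$; dominated convergence then gives strong convergence directly for every $u\in H_{\rho}^{\alpha}(\R;H)$ without an intermediate density step. Your approach is more concrete and exploits the explicit spectral representation the paper has already set up (making the invertibility and the uniform norm bound visible at a glance), while the paper's accretivity/adjoint argument is self-contained at the operator-theoretic level and would carry over to settings where no such spectral picture is available. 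Both are complete and correct; one minor remark is that in your version the uniform bound $|g_{\varepsilon}|\le1$ is not strictly needed to conclude strong convergence (the dominated-convergence argument applies pointwise for each fixed $u$), though it is of course exactly what yields the asserted uniform operator-norm bound.
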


\begin{proof}
For $u\in H_{\rho}^{\alpha+1}(\R;H)$ we have that 
\[
\Re\langle(1+\varepsilon\partial_{t,\rho})u,u\rangle_{\rho,\alpha}=\|u\|_{\rho,\alpha}^{2}+\varepsilon\Re\langle\partial_{t,\rho}\partial_{t,\rho}^{\alpha}u,\partial_{t,\rho}^{\alpha}u\rangle_{\rho,0}\geq\|u\|_{\rho,\alpha}^{2}.
\]
Thus, $(1+\varepsilon\partial_{t,\rho})$ is injective, posseses a
closed range and its inverse (defined on the range) is continuous
with operator norm bounded by 1. Thus, to prove the continuous invertibility,
we have to show that $\ran(1+\varepsilon\partial_{t,\rho})$ is dense
in $H_{\rho}^{\alpha}(\R;H)$. For doing so, we first compute the
adjoint of $\partial_{t,\rho}$. For elements $v,w\in H_{\rho}^{\alpha}(\R;H)$
we have that $v\in\dom(\partial_{t,\rho}^{\ast})$ with $\partial_{t,\rho}^{\ast}v=w$
if and only if 
\[
\langle\partial_{t,\rho}u,v\rangle_{\rho,\alpha}=\langle u,w\rangle_{\rho,\alpha}\quad(u\in H_{\rho}^{\alpha+1}(\R;H)).
\]
The latter is equivalent to 
\[
\langle\partial_{t,\rho}x,\partial_{t,\rho}^{\alpha}v\rangle_{\rho,0}=\langle x,\partial_{t,\rho}^{\alpha}w\rangle_{\rho,0}\quad(x\in H_{\rho}^{1}(\R;H)),
\]
which in turn is equivalent to $\partial_{t,\rho}^{\alpha}v\in H_{\rho}^{1}(\R;H)$
and $\partial_{t,\rho}^{\alpha}w=(-\partial_{t,\rho}+2\rho)\partial_{t,\rho}^{\alpha}v,$
where we used $\partial_{t,\rho}^{*}=-\partial_{t,\rho}+2\rho$ in
$L_{2,\rho}(\R;H)$, see \prettyref{prop:elementary facts td}. Thus,
\[
\partial_{t,\rho}^{\ast}=-\partial_{t,\rho}+2\rho,
\]
where both operators are considered as operators on $H_{\rho}^{\alpha}(\R;H).$
Thus, 
\[
\Re\langle(1+\varepsilon\partial_{t,\rho}^{\ast})u,u\rangle_{\rho,\alpha}=\|u\|_{\rho,\alpha}^{2}+\varepsilon\Re\langle u,\partial_{t,\rho}u\rangle_{\rho,\alpha}\geq\|u\|_{\rho,\alpha}^{2}\quad(u\in H_{\rho}^{\alpha+1}(\R;H)=\dom(\partial_{t,\rho}^{\ast}))
\]
and hence, $1+\varepsilon\partial_{t,\rho}^{\ast}$ is injective,
which shows the density of $\ran(1+\varepsilon\partial_{t,\rho})$
in $H_{\rho}^{\alpha+1}(\R;H).$ To prove the strong convergence,
it suffices to show the convergence for elements in $H_{\rho}^{\alpha+1}(\R;H),$
since $\sup_{\varepsilon>0}\|(1+\varepsilon\partial_{t,\rho})^{-1}\|_{L(H_{\rho}^{\alpha}(\mathbb{R};H))}\leq1$
by what we have shown above. For $u\in H_{\rho}^{\alpha+1}(\R;H)$
we compute 
\begin{align*}
\|(1+\varepsilon\partial_{t,\rho})^{-1}u-u\|_{\rho,\alpha} & =\|(1+\varepsilon\partial_{t,\rho})^{-1}(u-(u+\varepsilon\partial_{t,\rho}u))\|_{\rho,\alpha}\\
 & \leq\varepsilon\|u\|_{\rho,\alpha+1}\to0\quad(\varepsilon\to0).\tag*{\qedhere}
\end{align*}
\end{proof}
We conclude this section, by citing the main result of \cite{Waurick2015}.
\begin{thm}[{\cite[Theorem 3.4]{Waurick2015}}]
\label{thm:sol_theory_basic} Let $\rho>0$ and $\mathcal{M},\mathcal{N}\in L(L_{2,\rho}(\R;H)).$
Moreover, assume there exists $\mathcal{M}'\in L(L_{2,\rho}(\R;H))$
such that 
\[
\mathcal{M}\partial_{t,\rho}\subseteq\partial_{t,\rho}\mathcal{M}-\mathcal{M}'.
\]
Let $A:\dom(A)\subseteq H\to H$ be skew-selfadjoint. Furthermore,
assume there exists $c>0$ such that
\[
\Re\langle(\partial_{t,\rho}\mathcal{M}+\mathcal{N})u,u\rangle_{\rho,0}\geq c\|u\|_{\rho,0}^{2}\quad(u\in H_{\rho}^{1}(\R;H)).
\]
Then the operator $\partial_{t,\rho}\mathcal{M}+\mathcal{N}+A$ is
closable, and its closure is continuously invertible. Here, $A$ is
identified with its canonical extension to a skew-selfadjoint operator
on $L_{2,\rho}(\R;H)$ with domain $L_{2,\rho}(\R;\dom(A)).$ 
\end{thm}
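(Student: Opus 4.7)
The plan is to work with the operator $T_{0}:=\partial_{t,\rho}\mathcal{M}+\mathcal{N}+A$ on the natural common domain $D:=H_{\rho}^{1}(\R;H)\cap L_{2,\rho}(\R;\dom(A))$ (dense in $L_{2,\rho}(\R;H)$, since it contains $C_{c}^{\infty}(\R;\dom(A))$). The strategy splits into two complementary estimates: first, a lower bound $\|T_{0}u\|_{\rho,0}\geq c\|u\|_{\rho,0}$ on $D$, which yields closability and already provides injectivity and closed range for the closure $T:=\overline{T_{0}}$; second, the same lower bound for the formal adjoint of $T_{0}$ on $D$, which, combined with a regularization, forces density of range and hence the continuous invertibility claimed. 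The first bound is straightforward: the canonical extension of the skew-selfadjoint $A$ to $L_{2,\rho}(\R;H)$ is still skew-selfadjoint, so $\Re\langle Au,u\rangle_{\rho,0}=0$; the positive definiteness hypothesis then directly gives $\Re\langle T_{0}u,u\rangle_{\rho,0}\geq c\|u\|_{\rho,0}^{2}$, and Cauchy--Schwarz furnishes the claimed estimate.

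For the adjoint bound, I first take adjoints in $\mathcal{M}\partial_{t,\rho}\subseteq\partial_{t,\rho}\mathcal{M}-\mathcal{M}'$ and use $\partial_{t,\rho}^{*}=-\partial_{t,\rho}+2\rho$ to obtain the mirror relation $\mathcal{M}^{*}\partial_{t,\rho}\subseteq\partial_{t,\rho}\mathcal{M}^{*}-(\mathcal{M}')^{*}$; in particular both $\mathcal{M}$ and $\mathcal{M}^{*}$ map $H_{\rho}^{1}(\R;H)$ into itself. Set $T_{0}^{\sharp}:=\mathcal{M}^{*}(-\partial_{t,\rho}+2\rho)+\mathcal{N}^{*}-A$ on $D$; for $v\in D$, the identity
\[
\langle\partial_{t,\rho}v,\mathcal{M}v\rangle_{\rho,0}=-\langle v,\partial_{t,\rho}\mathcal{M}v\rangle_{\rho,0}+2\rho\langle v,\mathcal{M}v\rangle_{\rho,0}
\]
is legitimate since $\mathcal{M}v\in H_{\rho}^{1}(\R;H)$, and substituting into the expansion of $\Re\langle T_{0}^{\sharp}v,v\rangle_{\rho,0}$ the two $2\rho\Re\langle\mathcal{M}v,v\rangle_{\rho,0}$ contributions cancel exactly, leaving
\[
\Re\langle T_{0}^{\sharp}v,v\rangle_{\rho,0}=\Re\langle(\partial_{t,\rho}\mathcal{M}+\mathcal{N})v,v\rangle_{\rho,0}\geq c\|v\|_{\rho,0}^{2}.
\]

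To conclude, let $w\in\ran(T)^{\perp}$, so that $w\in\dom(T^{*})$ with $T^{*}w=0$, and the task is to deduce $w=0$. Since $\dom(T^{*})$ may be strictly larger than $D$, I regularize in both time and space: with $R_{\varepsilon}:=(1+\varepsilon\partial_{t,\rho}^{*})^{-1}$ (supplied by the $\partial_{t,\rho}^{*}$-analogue of \prettyref{lem:approx} with $\alpha=0$) and the Yosida approximant $J_{\varepsilon}:=(1+\varepsilon^{2}A^{2})^{-1}$, set $w_{\varepsilon}:=R_{\varepsilon}J_{\varepsilon}w\in D$. Since $A$ acts only in the spatial variable, $R_{\varepsilon}$ and $J_{\varepsilon}$ commute, are contractions, and tend strongly to the identity, so $w_{\varepsilon}\to w$ in $L_{2,\rho}(\R;H)$. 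Rewriting $T_{0}^{\sharp}w_{\varepsilon}$ as $R_{\varepsilon}J_{\varepsilon}T^{*}w$ plus commutator errors of the form $[\mathcal{M}^{*},J_{\varepsilon}]$ and $[\mathcal{N}^{*},J_{\varepsilon}]$ applied to smoothed vectors, the first summand vanishes, and the commutator errors tend to zero strongly; applying the adjoint coercivity to $w_{\varepsilon}$ and passing to the limit forces $w=0$, giving density of range. I expect the main obstacle to be exactly this last step: $J_{\varepsilon}$ commutes with $\partial_{t,\rho}$ and with $A$ but not with $\mathcal{M}$ or $\mathcal{N}$, so controlling $[\mathcal{M}^{*},J_{\varepsilon}]w,[\mathcal{N}^{*},J_{\varepsilon}]w\to 0$ must rely on the strong convergence $J_{\varepsilon}\to 1$ together with the uniform operator-norm bounds on $\mathcal{M}$, $\mathcal{N}$ and $\mathcal{M}'$.
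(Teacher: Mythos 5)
Your overall strategy is the standard one and the first two thirds are sound: restricting to the dense core $D=H_{\rho}^{1}(\R;H)\cap L_{2,\rho}(\R;\dom(A))$ is legitimate, the coercivity of $T_{0}$ on $D$ is correct, the mirror commutator relation for $\mathcal{M}^{*}$ is correctly derived, and your computation of $\Re\langle T_{0}^{\sharp}v,v\rangle_{\rho,0}=\Re\langle(\partial_{t,\rho}\mathcal{M}+\mathcal{N})v,v\rangle_{\rho,0}$ checks out (I note in passing the typo $J_{\varepsilon}=(1+\varepsilon^{2}A^{2})^{-1}$, which should read $(1-\varepsilon^{2}A^{2})^{-1}$ since $A^{2}\leq 0$ for skew--selfadjoint $A$, and that the paper itself only cites this theorem from \cite{Waurick2015} and does not reprove it).

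The gap is precisely where you flag it, but it is sharper than a technicality. When you expand $T_{0}^{\sharp}w_{\varepsilon}-R_{\varepsilon}J_{\varepsilon}T^{*}w$, the contribution coming from $\mathcal{M}^{*}\partial_{t,\rho}^{*}R_{\varepsilon}J_{\varepsilon}w$ produces the error term $[\mathcal{M}^{*},J_{\varepsilon}]\,\partial_{t,\rho}^{*}R_{\varepsilon}w$, not $[\mathcal{M}^{*},J_{\varepsilon}]w$. The commutator $[\mathcal{M}^{*},J_{\varepsilon}]$ tends to zero only strongly, whereas $\partial_{t,\rho}^{*}R_{\varepsilon}w=\tfrac{1}{\varepsilon}(1-R_{\varepsilon})w$ is unbounded as $\varepsilon\to0$ whenever $w\notin H_{\rho}^{1}(\R;H)$ --- which is exactly the case you cannot exclude. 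A strongly-null operator applied to a norm-divergent argument need not vanish, so the claimed convergence of the commutator errors fails, and the argument does not close.

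The fix is to use the spatial resolvent only as a bootstrap, never in the final estimate. Decouple the two parameters: test $\langle T_{0}u,w\rangle_{\rho,0}=0$ with $u=(1+\delta\partial_{t,\rho})^{-1}J_{\varepsilon}v$ and use the commutator relations for $\mathcal{M},\mathcal{M}'$ together with the fact that $J_{\varepsilon}$ commutes with everything temporal to obtain, for every $v$,
\[
AJ_{\varepsilon}R_{\delta}w=J_{\varepsilon}\bigl(\partial_{t,\rho}^{*}R_{\delta}\mathcal{M}^{*}w+R_{\delta}(\mathcal{M}')^{*}w+R_{\delta}\mathcal{N}^{*}w\bigr),
\]
where $R_{\delta}:=(1+\delta\partial_{t,\rho}^{*})^{-1}$. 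For fixed $\delta$, the right-hand side converges strongly as $\varepsilon\to0$; since $A$ is closed, this forces $R_{\delta}w\in\dom(A)$, hence $R_{\delta}w\in D$, and yields an explicit formula for $AR_{\delta}w$. Now apply the adjoint coercivity to $R_{\delta}w$ alone; using $[\mathcal{M}^{*},R_{\delta}]=-\delta R_{\delta}(\mathcal{M}')^{*}R_{\delta}$ one computes
\[
T_{0}^{\sharp}R_{\delta}w=R_{\delta}(\mathcal{M}')^{*}R_{\delta}w+\mathcal{N}^{*}R_{\delta}w-R_{\delta}(\mathcal{M}')^{*}w-R_{\delta}\mathcal{N}^{*}w\longrightarrow 0\quad(\delta\to0),
\]
and the commutator of $\mathcal{M}^{*}$ with $J_{\varepsilon}$ never enters. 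Passing to the limit in $c\|R_{\delta}w\|_{\rho,0}^{2}\leq\Re\langle T_{0}^{\sharp}R_{\delta}w,R_{\delta}w\rangle_{\rho,0}$ then gives $w=0$ as intended.
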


\begin{rem}
\begin{enumerate}[(a)]

\item In \cite[Theorem 3.4]{Waurick2015} the assumptions in the
operators involved are a little bit weaker, but for our purposes,
this version of the theorem is sufficient; for a comprehensive discussion,
see \cite[Theorems 3.3.2]{W16_H} for the version above and \cite[Theorems 3.4.6]{W16_H}
for the corresponding variant in \cite{Waurick2015}.

\item \prettyref{thm:sol_theory_basic} provides a unified solution
theory for a broad class of non-autonomous problems. Due to the flexibility
of the choice of the operators $\mathcal{M}$ and $\mathcal{N},$
which act in space-time, the problem class comprises many different
types of differential equations, like delay equations, fractional
differential equations, integro-differential equations and coupled
problems thereof (see e.g. \cite{PTW2015_survey,seifert2020evolutionary}
for some survey in the autonomous case and \cite{W16_H,Trostorff2020a,PTWW13_NA}
for some non-autonomous and/or nonlinear examples). 

\end{enumerate}
\end{rem}

\section{The solution theory in $H_{\rho}^{1/2}(\R;H)$\label{sec:The-solution-theory H12}}

In this section, we have a closer look at the solution theory for
evolutionary equations in $H_{\rho}^{1/2}(\R;H)$; that is, we prove
an analogous statement to \prettyref{thm:sol_theory_basic}, but now
the equation is considered as an equation on $H_{\rho}^{1/2}(\R;H).$
The basic setting is the following. Let $\mathcal{M},\mathcal{N},\mathcal{M}'\in L(L_{2,\rho}(\R;H))$
with the following properties:
\[
\Re\langle\left(\partial_{t,\rho}\mathcal{M}+\mathcal{N}\right)\phi,\phi\rangle_{\rho,1/2}\geq c\langle\phi,\phi\rangle_{\rho,1/2}
\]
for all $\phi\in H_{\rho}^{3/2}(\R;H)$ some $\rho>0$ and $c>0$.
Moreover, we assume that 
\[
\mathcal{M}\partial_{t,\rho}\subseteq\partial_{t,\rho}\mathcal{M}-\mathcal{M}'
\]
and $\mathcal{N}|_{H_{\rho}^{1/2}(\R;H)},\mathcal{M}'|_{H_{\rho}^{1/2}(\R;H)}\in L(H_{\rho}^{1/2}(\R;H))$.

We discuss operators on $H_{\rho}^{1/2}(\R;H)$ in more detail next.
For this we recall the notation of the commutator of two operators
$S$ and $T$ on some Hilbert space $H$,
\[
[S,T]\coloneqq ST-TS,\quad\text{ with }\dom([S,T])=\dom(ST)\cap\dom(TS).
\]
In the case that $[S,T]$ is densely defined in $H$ and extends to
a bounded linear operator on $H$, we omit the closure bar and just
write $[S,T]\in L(H)$. Consequently, we also use $[S,T]$ to denote
the (then continuous operator) $\overline{[S,T]}$.
\begin{lem}
\label{lem:12comm}

(a) Let $\mathcal{C}\in L(L_{2,\rho}(\R;H)).$ Then $\mathcal{C}|_{H_{\rho}^{1/2}}\in L(H_{\rho}^{1/2}(\R;H))$
if and only if $\partial_{t,\rho}^{1/2}\mathcal{\mathcal{C}}\partial_{t,\rho}^{-1/2}\in L(L_{2,\rho}(\R;H))$
in either case, we have 
\[
\|\mathcal{C}|_{H_{\rho}^{1/2}}\|_{L(H_{\rho}^{1/2}(\R;H))}=\|\partial_{t,\rho}^{1/2}\mathcal{C}|_{H_{\rho}^{1/2}}\partial_{t,\rho}^{-1/2}\|_{L(L_{2,\rho}(\R;H))}.
\]
Either of the alternative conditions is particularly satisfied if
$[\mathcal{C},\partial_{t,\rho}^{1/2}]\in L(L_{2,\rho}(\R;H))$. Moreover,
in this case we have
\begin{align*}
\|\mathcal{C}|_{H_{\rho}^{1/2}}\|_{L(H_{\rho}^{1/2}(\R;H))} & \leq\|\mathcal{C}\|_{L(L_{2,\rho}(\R;H))}+\frac{1}{\sqrt{\rho}}\|[\mathcal{C},\partial_{t,\rho}^{1/2}]\|_{L(L_{2,\rho}(\R;H))}.
\end{align*}

(b) $[\mathcal{N},(1+\varepsilon\partial_{t,\rho})^{-1}]\to0$ strongly
in $L(H_{\rho}^{1/2}(\R;H))$ as $\varepsilon\to0+$. 

(c) $\left[\partial_{t,\rho}\mathcal{M},(1+\varepsilon\partial_{t,\rho})^{-1}\right]\in L(H_{\rho}^{1/2}(\R;H))$
and $\left[\partial_{t,\rho}\mathcal{M},(1+\varepsilon\partial_{t,\rho})^{-1}\right]\to0$
strongly in $L(H_{\rho}^{1/2}(\R;H))$ as $\varepsilon\to0+$.
\end{lem}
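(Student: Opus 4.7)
My plan is to handle (a), (b), (c) in turn, leveraging two prior results: the unitarity of $\partial_{t,\rho}^{1/2}\colon H_{\rho}^{1/2}(\R;H)\to L_{2,\rho}(\R;H)$ (established in the proposition immediately preceding \prettyref{rem:interpolation}), and \prettyref{lem:approx} applied at level $\alpha = 1/2$, which supplies $R_\varepsilon := (1+\varepsilon\partial_{t,\rho})^{-1} \in L(H_{\rho}^{1/2}(\R;H))$ with $\|R_\varepsilon\|\leq 1$ and $R_\varepsilon \to 1$ strongly on $H_{\rho}^{1/2}(\R;H)$.

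For (a), the equivalence and norm identity follow by conjugating through the unitary $\partial_{t,\rho}^{1/2}$: if $\mathcal{C}$ leaves $H_{\rho}^{1/2}(\R;H)$ invariant, then for every $u \in L_{2,\rho}(\R;H)$ one has $\mathcal{C}\partial_{t,\rho}^{-1/2}u \in H_{\rho}^{1/2}(\R;H)$ and $\|\partial_{t,\rho}^{1/2}\mathcal{C}\partial_{t,\rho}^{-1/2}u\|_{\rho,0} = \|\mathcal{C}\partial_{t,\rho}^{-1/2}u\|_{\rho,1/2}$, from which the isometric identification is immediate. The reverse implication uses closedness of $\partial_{t,\rho}^{1/2}$ together with density of the natural domain of $\partial_{t,\rho}^{1/2}\mathcal{C}\partial_{t,\rho}^{-1/2}$. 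For the commutator-based sufficient condition, I decompose $\partial_{t,\rho}^{1/2}\mathcal{C}\partial_{t,\rho}^{-1/2} = \mathcal{C} + [\partial_{t,\rho}^{1/2},\mathcal{C}]\partial_{t,\rho}^{-1/2}$ on a dense subspace and combine with the bound $\|\partial_{t,\rho}^{-1/2}\|\leq \rho^{-1/2}$ from \prettyref{prop:fractional_integral} to recover the stated inequality.

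For (b), since $\mathcal{N}|_{H_{\rho}^{1/2}}\in L(H_{\rho}^{1/2}(\R;H))$ by assumption, splitting $[\mathcal{N},R_\varepsilon]u = \mathcal{N}(R_\varepsilon - 1)u - (R_\varepsilon - 1)\mathcal{N}u$ shows that both terms tend to $0$ in $H_{\rho}^{1/2}(\R;H)$ by continuity of $\mathcal{N}|_{H_{\rho}^{1/2}}$ and the strong convergence $R_\varepsilon\to 1$ from \prettyref{lem:approx}.

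The bulk of the work is (c). I first derive the resolvent identity $\mathcal{M}R_\varepsilon = R_\varepsilon\mathcal{M} + \varepsilon R_\varepsilon\mathcal{M}'R_\varepsilon$ from the commutator relation $\mathcal{M}\partial_{t,\rho}\subseteq \partial_{t,\rho}\mathcal{M} - \mathcal{M}'$. Multiplying by $\partial_{t,\rho}$ from the left and using both $\partial_{t,\rho}R_\varepsilon = R_\varepsilon\partial_{t,\rho}$ (functional calculus) and $\varepsilon\partial_{t,\rho}R_\varepsilon = 1 - R_\varepsilon$, this produces the core identity $[\partial_{t,\rho}\mathcal{M},R_\varepsilon] = (1 - R_\varepsilon)\mathcal{M}'R_\varepsilon$. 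I would first justify this computation on the core $H_{\rho}^1(\R;H)$, where $\mathcal{M}v \in \dom(\partial_{t,\rho})$ is available for every $v$ in the space, and then extend by density because the right-hand side is bounded on $H_{\rho}^{1/2}(\R;H)$ under the standing hypothesis $\mathcal{M}'|_{H_{\rho}^{1/2}}\in L(H_{\rho}^{1/2}(\R;H))$ together with \prettyref{lem:approx}. For the strong convergence, writing $(1-R_\varepsilon)\mathcal{M}'R_\varepsilon u = (1-R_\varepsilon)\mathcal{M}'u + (1-R_\varepsilon)\mathcal{M}'(R_\varepsilon - 1)u$ and exploiting the uniform bound $\|1-R_\varepsilon\|_{L(H_{\rho}^{1/2})}\leq 2$ reduces everything to $R_\varepsilon \to 1$ strongly on $H_{\rho}^{1/2}(\R;H)$. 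The main obstacle is making the initial commutator identity rigorous on a dense subspace on which every operator involved acts legitimately; this is resolved by observing that $R_\varepsilon$ regularises, sending $H_{\rho}^{1/2}(\R;H)$ into $H_{\rho}^{3/2}(\R;H)\subseteq \dom(\partial_{t,\rho}\mathcal{M})$, where the relation $\partial_{t,\rho}\mathcal{M} = \mathcal{M}\partial_{t,\rho} + \mathcal{M}'$ can be exploited without closure bars.
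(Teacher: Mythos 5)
Your proposal is correct and takes essentially the same approach as the paper: in (a), conjugation by the unitary $\partial_{t,\rho}^{1/2}$ together with the decomposition $\partial_{t,\rho}^{1/2}\mathcal{C}\partial_{t,\rho}^{-1/2}=\mathcal{C}+[\partial_{t,\rho}^{1/2},\mathcal{C}]\partial_{t,\rho}^{-1/2}$; in (b) and (c), the resolvent identity $[\partial_{t,\rho}\mathcal{M},R_\varepsilon]=(1-R_\varepsilon)\mathcal{M}'R_\varepsilon$ combined with the strong convergence $R_\varepsilon\to1$ from \prettyref{lem:approx}. The only difference is presentational — you estimate directly in $H_\rho^{1/2}(\R;H)$ whereas the paper transports the commutators to $L_{2,\rho}(\R;H)$ via part (a) before applying \prettyref{lem:approx} — and in (c) the core on which the identity is first verified should be $H_\rho^{3/2}(\R;H)$ rather than $H_\rho^1(\R;H)$, a slip you in fact correct yourself when invoking the regularising effect of $R_\varepsilon$.
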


\begin{proof}
(a) Let $\phi\in C_{c}^{\infty}(\R;H)$. Assume that $\partial_{t,\rho}^{1/2}\mathcal{\mathcal{C}}\partial_{t,\rho}^{-1/2}\in L(L_{2,\rho}(\R;H)).$
Then we compute
\begin{align*}
\|\mathcal{C}\phi\|_{\rho,\frac{1}{2}} & =\|\partial_{t,\rho}^{1/2}\mathcal{C}\partial_{t,\rho}^{-1/2}\partial_{t,\rho}^{1/2}\phi\|_{\rho,0}\leq\|\partial_{t,\rho}^{1/2}\mathcal{C}\partial_{t,\rho}^{-1/2}\|_{L(L_{2,\rho}(\R;H))}\|\phi\|_{\rho,\frac{1}{2}}.
\end{align*}
If, on the other hand, $\mathcal{C}|_{H_{\rho}^{1/2}}\in L(H_{\rho}^{1/2}(\R;H))$.
Then $\mathcal{\partial}_{t,\rho}^{1/2}\mathcal{C}|_{H_{\rho}^{1/2}}\partial_{t,\rho}^{-1/2}\in L(L_{2,\rho}(\R;H))$
since $\partial_{t,\rho}^{-1/2}\in L(L_{2,\rho}(\R;H),H_{\rho}^{1/2}(\R;H))$
and $\partial_{t,\rho}^{1/2}\in L(H_{\rho}^{1/2}(\R;H),L_{2,\rho}(\R;H))$
are unitary.\\
Assume now that $[\mathcal{C},\partial_{t,\rho}^{1/2}]\in L(L_{2,\rho}(\R;H))$.
Then 
\[
\partial_{t,\rho}^{1/2}\mathcal{C}\partial_{t,\rho}^{-1/2}=[\partial_{t,\rho}^{1/2},\mathcal{C}]\partial_{t,\rho}^{-1/2}+\mathcal{C}\in L(L_{2,\rho}(\R;H))
\]
and, using \prettyref{prop:fractional_integral}, we get
\begin{align*}
\|\mathcal{C}|_{H_{\rho}^{1/2}}\|_{L(H_{\rho}^{1/2}(\R;H))} & =\|\partial_{t,\rho}^{1/2}\mathcal{C}|_{H_{\rho}^{1/2}}\partial_{t,\rho}^{-1/2}\|_{L(L_{2,\rho}(\R;H))}\\
 & \leq\|\mathcal{C}\|_{L(L_{2,\rho}(\R;H)}+\|[\mathcal{C},\partial_{t,\rho}^{1/2}]\partial_{t,\rho}^{-1/2}\|_{L(L_{2,\rho}(\R;H))}\\
 & \leq\|\mathcal{C}\|_{L(L_{2,\rho}(\R;H)}+\frac{1}{\sqrt{\rho}}\|[\mathcal{C},\partial_{t,\rho}^{1/2}]\|_{L(L_{2,\rho}(\R;H))}.
\end{align*}

(b) Let $\varepsilon>0$. By (a) together with the proved inequality,
it suffices to show that $\partial_{t,\rho}^{1/2}[\mathcal{N},(1+\varepsilon\partial_{t,\rho})^{-1}]\partial_{t,\rho}^{-1/2}\to0$
strongly in $L(L_{2,\rho}(\R;H))$. For this, we compute 
\begin{align*}
\partial_{t,\rho}^{1/2}[\mathcal{N},(1+\varepsilon\partial_{t,\rho})^{-1}]\partial_{t,\rho}^{-1/2} & =\partial_{t,\rho}^{1/2}\mathcal{N}(1+\varepsilon\partial_{t,\rho})^{-1}\partial_{t,\rho}^{-1/2}-\partial_{t,\rho}^{1/2}(1+\varepsilon\partial_{t,\rho})^{-1}\mathcal{N}\partial_{t,\rho}^{-1/2}\\
 & =\left[\partial_{t,\rho}^{1/2}\mathcal{N}\partial_{t,\rho}^{-1/2},(1+\varepsilon\partial_{t,\rho})^{-1}\right],
\end{align*}
the latter tends to $0$ since $\partial_{t,\rho}^{1/2}\mathcal{N}\partial_{t,\rho}^{-1/2}\in L(L_{2,\rho}(\R;H))$,
by part (a) and $(1+\varepsilon\partial_{t,\rho})^{-1}\to1$ strongly
as $\varepsilon\to0$ by \prettyref{lem:approx}.

(c) Let $\varepsilon>0$. Then we compute using $\mathcal{M}'=\left[\partial_{t,\rho},\mathcal{M}\right]$
\begin{align*}
\left[\partial_{t,\rho}\mathcal{M},(1+\varepsilon\partial_{t,\rho})^{-1}\right] & =\left(\partial_{t,\rho}\mathcal{M}(1+\varepsilon\partial_{t,\rho})^{-1}-(1+\varepsilon\partial_{t,\rho})^{-1}\partial_{t,\rho}\mathcal{M}\right)\\
 & =\partial_{t,\rho}\left(\mathcal{M}(1+\varepsilon\partial_{t,\rho})^{-1}-(1+\varepsilon\partial_{t,\rho})^{-1}\mathcal{M}\right)\\
 & =\partial_{t,\rho}(1+\varepsilon\partial_{t,\rho})^{-1}\left((1+\varepsilon\partial_{t,\rho})\mathcal{M}-\mathcal{M}(1+\varepsilon\partial_{t,\rho})\right)(1+\varepsilon\partial_{t,\rho})^{-1}\\
 & =\varepsilon\partial_{t,\rho}(1+\varepsilon\partial_{t,\rho})^{-1}\mathcal{M}'(1+\varepsilon\partial_{t,\rho})^{-1}.
\end{align*}
Hence, we deduce
\begin{align*}
 & \partial_{t,\rho}^{1/2}\left[\partial_{t,\rho}\mathcal{M},(1+\varepsilon\partial_{t,\rho})^{-1}\right]\partial_{t,\rho}^{-1/2}\\
 & =\varepsilon\partial_{t,\rho}(1+\varepsilon\partial_{t,\rho})^{-1}\partial_{t,\rho}^{1/2}\mathcal{M}'\partial_{t,\rho}^{-1/2}(1+\varepsilon\partial_{t,\rho})^{-1}\in L(L_{2,\rho}(\R;H))
\end{align*}
and thus, $\left[\partial_{t,\rho}\mathcal{M},(1+\varepsilon\partial_{t,\rho})^{-1}\right]\in L(H_{\rho}^{1/2}(\R;H))$
by (a). Moreover, by \prettyref{lem:approx}
\begin{align*}
\partial_{t,\rho}^{1/2}\left[\partial_{t,\rho}\mathcal{M},(1+\varepsilon\partial_{t,\rho})^{-1}\right]\partial_{t,\rho}^{-1/2} & =\varepsilon\partial_{t,\rho}(1+\varepsilon\partial_{t,\rho})^{-1}\partial_{t,\rho}^{1/2}\mathcal{M}'\partial_{t,\rho}^{-1/2}(1+\varepsilon\partial_{t,\rho})^{-1}\\
 & =(1-(1+\varepsilon\partial_{t,\rho})^{-1})\partial_{t,\rho}^{1/2}\mathcal{M}'\partial_{t,\rho}^{-1/2}(1+\varepsilon\partial_{t,\rho})^{-1}\\
 & \to0\quad(\varepsilon\to0)
\end{align*}
strongly in $L(L_{2,\rho}(\R;H))$, which yields the asserted convergence
again by part (a).
\end{proof}
\begin{prop}
\label{prop:range_d0M0}We have $\partial_{t,\rho}\mathcal{M}[H_{\rho}^{3/2}(\R;H)]\subseteq H_{\rho}^{1/2}(\R;H)$. 
\end{prop}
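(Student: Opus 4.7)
The plan is to leverage the commutator identity together with the interpolation result from \prettyref{rem:interpolation}(b) in order to deduce that $\mathcal{M}$ acts continuously on $H_\rho^{1/2}(\R;H)$; once this is available, the inclusion is immediate from the commutator relation.

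The first step will be to show that $\mathcal{M}$ maps $H_\rho^1(\R;H)$ into itself continuously. Given $u\in H_\rho^1(\R;H)=\dom(\partial_{t,\rho})$, we have $u\in\dom(\mathcal{M}\partial_{t,\rho})$ because $\mathcal{M}\in L(L_{2,\rho}(\R;H))$; by the assumed inclusion $\mathcal{M}\partial_{t,\rho}\subseteq\partial_{t,\rho}\mathcal{M}-\mathcal{M}'$ we then obtain $\mathcal{M}u\in\dom(\partial_{t,\rho})=H_\rho^1(\R;H)$ together with the identity
\[
\partial_{t,\rho}\mathcal{M}u=\mathcal{M}\partial_{t,\rho}u+\mathcal{M}'u.
\]
Taking $\|\cdot\|_{\rho,0}$-norms and using the bounds $\|\mathcal{M}\|_{L(L_{2,\rho})}$, $\|\mathcal{M}'\|_{L(L_{2,\rho})}$ together with $\|u\|_{\rho,0}\leq\rho^{-1}\|u\|_{\rho,1}$ from \prettyref{prop:elementary facts td}(a) gives a constant $C=\|\mathcal{M}\|+\rho^{-1}\|\mathcal{M}'\|$ with $\|\mathcal{M}u\|_{\rho,1}\leq C\|u\|_{\rho,1}$; thus $\mathcal{M}|_{H_\rho^1}\in L(H_\rho^1(\R;H))$.

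The second step will be to apply \prettyref{rem:interpolation}(b): since $\mathcal{M}\in L(L_{2,\rho}(\R;H))\cap L(H_\rho^1(\R;H))$, complex interpolation yields $\mathcal{M}|_{H_\rho^{1/2}}\in L(H_\rho^{1/2}(\R;H))$.

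The third step wraps everything up. Given $u\in H_\rho^{3/2}(\R;H)\subseteq H_\rho^1(\R;H)$, the commutator identity produces
\[
\partial_{t,\rho}\mathcal{M}u=\mathcal{M}\partial_{t,\rho}u+\mathcal{M}'u,
\]
and $\partial_{t,\rho}u\in H_\rho^{1/2}(\R;H)$ because the map $\partial_{t,\rho}\colon H_\rho^{3/2}\to H_\rho^{1/2}$ is unitary. By the second step, $\mathcal{M}\partial_{t,\rho}u\in H_\rho^{1/2}(\R;H)$, while $\mathcal{M}'u\in H_\rho^{1/2}(\R;H)$ by the standing hypothesis that $\mathcal{M}'|_{H_\rho^{1/2}}\in L(H_\rho^{1/2}(\R;H))$. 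Summing these yields the desired conclusion $\partial_{t,\rho}\mathcal{M}u\in H_\rho^{1/2}(\R;H)$. I do not anticipate any serious obstacle; the only mildly delicate point is to spell out, at the first step, why the domain-inclusion $\mathcal{M}\partial_{t,\rho}\subseteq\partial_{t,\rho}\mathcal{M}-\mathcal{M}'$ automatically forces $\mathcal{M}u$ into $\dom(\partial_{t,\rho})$ whenever $u\in\dom(\partial_{t,\rho})$, so that $\mathcal{M}$ is well-defined as an endomorphism of $H_\rho^1(\R;H)$ before invoking interpolation.
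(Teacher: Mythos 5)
Your proposal is correct and follows essentially the same route as the paper: both use the identity $\partial_{t,\rho}\mathcal{M}\phi=\mathcal{M}'\phi+\mathcal{M}\partial_{t,\rho}\phi$ together with complex interpolation (Remark \ref{rem:interpolation}(b)) to get $\mathcal{M}\in L(H_\rho^{1/2}(\R;H))$. The one place you add value is Step 1, where you spell out why the commutator inclusion $\mathcal{M}\partial_{t,\rho}\subseteq\partial_{t,\rho}\mathcal{M}-\mathcal{M}'$ forces $\mathcal{M}\in L(H_\rho^1(\R;H))$ — the paper simply asserts this membership without writing out the short derivation.
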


\begin{proof}
Let $\phi\in\dom(\partial_{t,\rho}^{3/2})$. Then
\begin{align*}
\partial_{t,\rho}\mathcal{M}\phi & =\mathcal{M}'\phi+\mathcal{M}\partial_{t,\rho}\phi.
\end{align*}
Since $\mathcal{M}'\in L(H_{\rho}^{1/2}(\R;H))$, we obtain $\mathcal{M}'\phi\in H_{\rho}^{1/2}(\R;H).$
Furthermore, since $\mathcal{M}\in L(L_{2,\rho}(\R;H))\cap L(H_{\rho}^{1}(\R;H)),$
we deduce $\mathcal{M}\in L(H_{\rho}^{1/2}(\R;H))$ by complex interpolation
(see \prettyref{rem:interpolation} (b)). Hence, we also have $\mathcal{M}\partial_{t,\rho}\phi\in H_{\rho}^{1/2}(\R;H),$
which shows the assertion. 
\end{proof}
\begin{lem}
\label{lem:core_M}For $\varepsilon>0$ we set $R_{\varepsilon}\coloneqq(1+\varepsilon\partial_{t,\rho})^{-1}.$
Let $u\in\dom(\partial_{t,\rho}\mathcal{M})\subseteq H_{\rho}^{1/2}(\R;H).$
Then for each $\varepsilon>0$ and $k\in\N$ we have $R_{\varepsilon}^{k}u\in\dom(\partial_{t,\rho}\mathcal{M})$
and 
\[
\partial_{t,\rho}\mathcal{M}R_{\varepsilon}^{k}u\to\partial_{t,\rho}\mathcal{M}u\quad(\varepsilon\to0+).
\]
In particular $H_{\rho}^{k+1/2}(\R;H)$ is a core for $\partial_{t,\rho}\mathcal{M}$
for each $k\in\N$. 
\end{lem}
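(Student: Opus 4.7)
The plan hinges on the commutator identity established in the proof of \prettyref{lem:12comm}(c),
\[
[\partial_{t,\rho}\mathcal{M},R_{\varepsilon}] = \varepsilon\,\partial_{t,\rho}R_{\varepsilon}\mathcal{M}'R_{\varepsilon},
\]
which defines a bounded operator on $H_{\rho}^{1/2}(\R;H)$ whose norm is uniformly bounded in $\varepsilon$ and which tends to $0$ strongly as $\varepsilon\to 0+$. Coupled with \prettyref{lem:approx} applied at $\alpha=1/2$, giving $R_\varepsilon\to 1$ strongly on $H_{\rho}^{1/2}(\R;H)$, this supplies all the analytic input.

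First I treat $k=1$. Sandwiching the commutator between two copies of $R_\varepsilon$ gives the elementary identity $\mathcal{M}R_\varepsilon u = R_\varepsilon\mathcal{M}u + \varepsilon R_\varepsilon\mathcal{M}'R_\varepsilon u$ valid for every $u\in L_{2,\rho}(\R;H)$. For $u\in\dom(\partial_{t,\rho}\mathcal{M})$ each summand lies in $H_{\rho}^{3/2}(\R;H)$: $\mathcal{M}u\in H_{\rho}^{3/2}$ by definition of the domain, $R_\varepsilon$ boosts regularity by one full order, and $\mathcal{M}'$ preserves $H_{\rho}^{1/2}$ by hypothesis. Hence $\mathcal{M}R_\varepsilon u\in H_{\rho}^{3/2}$, which shows $R_\varepsilon u\in\dom(\partial_{t,\rho}\mathcal{M})$, and applying $\partial_{t,\rho}$ yields
\[
\partial_{t,\rho}\mathcal{M}R_{\varepsilon}u = R_{\varepsilon}\partial_{t,\rho}\mathcal{M}u + [\partial_{t,\rho}\mathcal{M},R_{\varepsilon}]u,
\]
whose right-hand side converges to $\partial_{t,\rho}\mathcal{M}u$ in $H_{\rho}^{1/2}(\R;H)$ by the two ingredients above.

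For general $k$ I argue by induction. Given the statement for $k$, I apply the identity of Step 1 to $R_\varepsilon^k u$ in place of $u$ (permissible since, by the inductive hypothesis, $R_\varepsilon^k u\in\dom(\partial_{t,\rho}\mathcal{M})$) to obtain $R_\varepsilon^{k+1} u\in\dom(\partial_{t,\rho}\mathcal{M})$ and
\[
\partial_{t,\rho}\mathcal{M}R_{\varepsilon}^{k+1}u = R_{\varepsilon}\partial_{t,\rho}\mathcal{M}R_{\varepsilon}^{k}u + [\partial_{t,\rho}\mathcal{M},R_{\varepsilon}]R_{\varepsilon}^{k}u.
\]
The first summand tends to $\partial_{t,\rho}\mathcal{M}u$ by the induction hypothesis together with $\|R_\varepsilon\|_{L(H_{\rho}^{1/2})}\leq 1$. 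For the second, I split $[\partial_{t,\rho}\mathcal{M},R_\varepsilon]R_\varepsilon^k u = [\partial_{t,\rho}\mathcal{M},R_\varepsilon](R_\varepsilon^k u - u) + [\partial_{t,\rho}\mathcal{M},R_\varepsilon]u$; the first piece vanishes in the limit by the uniform-in-$\varepsilon$ bound on the commutator combined with $R_\varepsilon^k u\to u$ in $H_{\rho}^{1/2}$, the second by strong convergence of the commutator to $0$.

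For the core statement, since $R_\varepsilon$ maps $H_{\rho}^{\alpha}(\R;H)$ into $H_{\rho}^{\alpha+1}(\R;H)$, iterating yields $R_\varepsilon^k u\in H_{\rho}^{k+1/2}(\R;H)$ whenever $u\in H_{\rho}^{1/2}(\R;H)$. For $k\geq 1$, \prettyref{prop:range_d0M0} shows $H_{\rho}^{k+1/2}\subseteq H_{\rho}^{3/2}\subseteq\dom(\partial_{t,\rho}\mathcal{M})$, while for $k=0$ there is nothing to check. The graph-norm convergence established above then exhibits $H_{\rho}^{k+1/2}(\R;H)$ as a core. The only point requiring care—and so the closest thing to an obstacle—is that the commutator identity, initially derived on $L_{2,\rho}(\R;H)$, has to be interpreted and controlled on $H_{\rho}^{1/2}(\R;H)$ with uniform-in-$\varepsilon$ bounds; but this is exactly what \prettyref{lem:12comm}(c) furnishes.
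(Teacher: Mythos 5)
Your proposal is correct and follows the same route as the paper: the identity $\mathcal{M}R_\varepsilon = R_\varepsilon\mathcal{M} + \varepsilon R_\varepsilon\mathcal{M}'R_\varepsilon$, the decomposition $\partial_{t,\rho}\mathcal{M}R_\varepsilon^{k}u = R_\varepsilon\partial_{t,\rho}\mathcal{M}R_\varepsilon^{k-1}u + [\partial_{t,\rho}\mathcal{M},R_\varepsilon]R_\varepsilon^{k-1}u$, and an induction on $k$ invoking \prettyref{lem:12comm}(c) and \prettyref{lem:approx}. You supply more detail than the paper on two points that the paper leaves implicit — that $R_\varepsilon u$ actually belongs to $\dom(\partial_{t,\rho}\mathcal{M})$, and that the $L(H_\rho^{1/2})$-norms of the commutators are uniformly bounded in $\varepsilon$ (needed to pass to the limit against the $\varepsilon$-dependent vectors $R_\varepsilon^{k-1}u$) — both of which do follow from the proof of \prettyref{lem:12comm}(c), as you note.
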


\begin{proof}
The proof follows by induction on $k$. For $k=0$ there is nothing
to show. Assume now that the assertion holds for $k-1$. Then we compute,
using \prettyref{lem:12comm} (c) and \prettyref{lem:approx}
\[
\partial_{t,\rho}\mathcal{M}R_{\varepsilon}^{k}u=[\partial_{t,\rho}\mathcal{M},R_{\varepsilon}]R_{\varepsilon}^{k-1}u+R_{\varepsilon}\partial_{t,\rho}\mathcal{M}R_{\varepsilon}^{k-1}u\to\partial_{t,\rho}\mathcal{M}u\quad(\varepsilon\to0+).\tag*{\qedhere}
\]
 
\end{proof}
\begin{thm}
\label{thm:sol_theory_1/2}The operator $\partial_{t,\rho}\mathcal{M}+\mathcal{N}+A$
considered as an operator on $H_{\rho}^{1/2}(\R;H)$ is closable and
its closure is continuously invertible.
\end{thm}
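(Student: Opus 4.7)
The most efficient approach is to transfer the statement to an instance of \prettyref{thm:sol_theory_basic} via the unitary $U\coloneqq\partial_{t,\rho}^{1/2}\colon H_{\rho}^{1/2}(\R;H)\to L_{2,\rho}(\R;H)$. Since $U$ is unitary, an operator $T$ in $H_{\rho}^{1/2}(\R;H)$ is closable and its closure continuously invertible if and only if the same is true of $UTU^{-1}$ on $L_{2,\rho}(\R;H)$. The plan is to set
\[
\tilde{\mathcal{M}}\coloneqq U\mathcal{M}U^{-1},\qquad\tilde{\mathcal{N}}\coloneqq U\mathcal{N}U^{-1},\qquad\tilde{\mathcal{M}}'\coloneqq U\mathcal{M}'U^{-1}
\]
and reduce to the $L_{2,\rho}$-solution theory applied to $(\tilde{\mathcal{M}},\tilde{\mathcal{N}},A)$. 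Note first that the commutator relation forces $\partial_{t,\rho}\mathcal{M}\partial_{t,\rho}^{-1}=\mathcal{M}+\mathcal{M}'\partial_{t,\rho}^{-1}\in L(L_{2,\rho}(\R;H))$, so $\mathcal{M}\in L(L_{2,\rho}(\R;H))\cap L(H_{\rho}^{1}(\R;H))$, whence $\mathcal{M}\in L(H_{\rho}^{1/2}(\R;H))$ by \prettyref{rem:interpolation}(b). Combined with the standing hypotheses on $\mathcal{N}$ and $\mathcal{M}'$, \prettyref{lem:12comm}(a) then yields $\tilde{\mathcal{M}},\tilde{\mathcal{N}},\tilde{\mathcal{M}}'\in L(L_{2,\rho}(\R;H))$.

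The next step is to verify the hypotheses of \prettyref{thm:sol_theory_basic} for $(\tilde{\mathcal{M}},\tilde{\mathcal{N}},A)$. The commutator identity $\tilde{\mathcal{M}}\partial_{t,\rho}\subseteq\partial_{t,\rho}\tilde{\mathcal{M}}-\tilde{\mathcal{M}}'$ follows by a direct calculation: for $\phi\in\dom(\partial_{t,\rho})=H_{\rho}^{1}(\R;H)$ one has $\partial_{t,\rho}^{-1/2}\phi\in H_{\rho}^{3/2}(\R;H)\subseteq\dom(\mathcal{M}\partial_{t,\rho})$, to which the assumption $\mathcal{M}\partial_{t,\rho}\subseteq\partial_{t,\rho}\mathcal{M}-\mathcal{M}'$ applies. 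For the positive definiteness, take $\psi\in H_{\rho}^{1}(\R;H)$ and set $\phi\coloneqq\partial_{t,\rho}^{-1/2}\psi\in H_{\rho}^{3/2}(\R;H)$; exploiting that $U$ is an isometry from $H_{\rho}^{1/2}$ onto $L_{2,\rho}$,
\[
\Re\langle(\partial_{t,\rho}\tilde{\mathcal{M}}+\tilde{\mathcal{N}})\psi,\psi\rangle_{\rho,0}=\Re\langle(\partial_{t,\rho}\mathcal{M}+\mathcal{N})\phi,\phi\rangle_{\rho,1/2}\geq c\|\phi\|_{\rho,1/2}^{2}=c\|\psi\|_{\rho,0}^{2}.
\]
Finally, $A$ in its canonical extension acts pointwise in the $H$-variable, whereas $U$ is a Fourier--Laplace multiplier in time by \prettyref{prop:elementary facts td}, so $U$ commutes with $A$; in particular $UAU^{-1}=A$ on $L_{2,\rho}(\R;H)$.

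Invoking \prettyref{thm:sol_theory_basic} with $(\tilde{\mathcal{M}},\tilde{\mathcal{N}},A)$ then produces closability and continuous invertibility of the closure of $\partial_{t,\rho}\tilde{\mathcal{M}}+\tilde{\mathcal{N}}+A$ on $L_{2,\rho}(\R;H)$. Unitary conjugation by $U^{-1}$ translates this verbatim back to the desired statement on $H_{\rho}^{1/2}(\R;H)$. The main obstacle I anticipate is bookkeeping of the domains: one must check that, on a joint core for $\partial_{t,\rho}\mathcal{M}$ and $A$ inside $H_{\rho}^{1/2}(\R;H)$, the conjugation under $U$ truly produces $\partial_{t,\rho}\tilde{\mathcal{M}}+\tilde{\mathcal{N}}+A$ on a corresponding core inside $L_{2,\rho}(\R;H)$. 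A natural choice is $H_{\rho}^{3/2}(\R;\dom(A))$, whose image under $U$ is $H_{\rho}^{1}(\R;\dom(A))$; that this is a core for $\partial_{t,\rho}\mathcal{M}$ on $H_{\rho}^{1/2}$ is exactly \prettyref{lem:core_M}, while the mapping property $\partial_{t,\rho}\mathcal{M}[H_{\rho}^{3/2}(\R;H)]\subseteq H_{\rho}^{1/2}(\R;H)$ ensuring that the expression is well-defined on this core is guaranteed by \prettyref{prop:range_d0M0}.
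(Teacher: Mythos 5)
Your proof is correct, and it takes a genuinely different route from the paper. The paper's argument is a direct one: it shows that $\partial_{t,\rho}\mathcal M+\mathcal N+A$ is strictly accretive on $H_\rho^{3/2}(\R;\dom(A))$, uses \prettyref{lem:core_M} to propagate the estimate to the whole domain (giving closability and injectivity with continuous inverse on the range), then computes the adjoint via the resolvent approximation $(1+\varepsilon\partial_{t,\rho}^*)^{-1}$ together with \prettyref{lem:12comm}(b),(c), shows that $H_\rho^{3/2}\cap\dom\bigl((\partial_{t,\rho}\mathcal M+\mathcal N+A)^*\bigr)$ is a core contained in $H_\rho^{1/2}(\R;\dom(A))$, and deduces injectivity of the adjoint, hence surjectivity. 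Your argument instead conjugates by the unitary $U=\partial_{t,\rho}^{1/2}\colon H_\rho^{1/2}(\R;H)\to L_{2,\rho}(\R;H)$ and checks that the transferred triple $(\tilde{\mathcal M},\tilde{\mathcal N},A)$ satisfies the hypotheses of \prettyref{thm:sol_theory_basic}, thereby inheriting closability and continuous invertibility on $L_{2,\rho}$ and pulling it back. This is conceptually cleaner: it exhibits the $H_\rho^{1/2}$-theory as the unitary conjugate of the $L_{2,\rho}$-theory, and it avoids redoing the adjoint/core analysis inside the proof, which is what makes the paper's version comparatively longer. The price is some domain bookkeeping, but as you partially observe, it all works: the hypotheses $\mathcal M',\mathcal N\in L(H_\rho^{1/2})$ plus interpolation and \prettyref{lem:12comm}(a) give $\tilde{\mathcal M},\tilde{\mathcal N},\tilde{\mathcal M}'\in L(L_{2,\rho}(\R;H))$; \prettyref{prop:range_d0M0} (equivalently, the identity $\mathcal M\partial_{t,\rho}\subseteq\partial_{t,\rho}\mathcal M-\mathcal M'$ applied to $H_\rho^{3/2}$-elements) shows $\mathcal M[H_\rho^{3/2}]\subseteq H_\rho^{3/2}$, which is exactly what is needed both for the transferred commutator inclusion and for the $H_\rho^{3/2}\to H_\rho^1$ change of variable in the positive definiteness estimate; and $UAU^{-1}=A$ because $A$ (in its canonical extension) commutes with the Fourier--Laplace multiplier $\partial_{t,\rho}^{1/2}$.

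One small remark: your final worry about cores is unnecessary. Once one has checked the equality of natural domains
\[
\dom\bigl(U(\partial_{t,\rho}\mathcal M+\mathcal N+A)U^{-1}\bigr)=\dom\bigl(\partial_{t,\rho}\tilde{\mathcal M}+\tilde{\mathcal N}+A\bigr)
=\{\psi\in L_{2,\rho}(\R;H): \tilde{\mathcal M}\psi\in H_\rho^1(\R;H)\}\cap L_{2,\rho}(\R;\dom(A))
\]
(which follows directly from $U[\{\phi\in H_\rho^{1/2}:\mathcal M\phi\in H_\rho^{3/2}\}]=\{\psi:\tilde{\mathcal M}\psi\in H_\rho^1\}$ and $U[H_\rho^{1/2}(\R;\dom(A))]=L_{2,\rho}(\R;\dom(A))$), the identity $U(\partial_{t,\rho}\mathcal M+\mathcal N+A)U^{-1}=\partial_{t,\rho}\tilde{\mathcal M}+\tilde{\mathcal N}+A$ holds as an exact equality of unbounded operators, not merely on a common core, and then closability and continuous invertibility of the closure transfer automatically under the unitary. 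So an appeal to \prettyref{lem:core_M} is not needed in your route, whereas it is genuinely used in the paper's direct proof to extend the accretivity estimate from $H_\rho^{3/2}(\R;\dom(A))$ to the whole graph.
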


\begin{proof}
Recall that all operators are now considered as operators acting on
$H_{\rho}^{1/2}(\R;H).$ Since $\partial_{t,\rho}\mathcal{M}+\mathcal{N}$
is strictly positive definite in the Hilbert space $H_{\rho}^{1/2}(\R;H)$
with domain $H_{\rho}^{3/2}(\R;H)$ and $A$ is skew-selfadjoint,
we derive that $\partial_{t,\rho}\mathcal{M}+\mathcal{N}+A$ is strictly
positive definite in the Hilbert space $H_{\rho}^{1/2}(\R;H)$ with
domain $H_{\rho}^{3/2}(\R;\dom(A)).$ By \prettyref{lem:core_M} this
positive definiteness extends to all elements in $\dom(\partial_{t,\rho}\mathcal{M}+\mathcal{N}+A)$
and thus, $\partial_{t,\rho}\mathcal{M}+\mathcal{N}+A$ is one-to-one
and has a continuous inverse defined on the range of $\partial_{t,\rho}\mathcal{M}+\mathcal{N}+A$.
Since $H_{\rho}^{3/2}(\R;\dom(A))$ is dense in $H_{\rho}^{1/2}(\R;H),$
the latter implies that $\partial_{t,\rho}\mathcal{M}+\mathcal{N}+A$
is closable (see e.g.~\cite[Proposition 2.3.14]{W16_H} or \cite[Theorem 4.2.5]{Beyer2007}).
Moreover, it is a standard argument to show that $\overline{\partial_{t,\rho}\mathcal{M}+\mathcal{N}+A}$
is continuously invertible on its range, which is closed. Hence, for
showing that $\overline{\partial_{t,\rho}\mathcal{M}+\mathcal{N}+A}$
is onto, it suffices to compute the adjoint and confirm that this
adjoint is one-to-one, which in turn would imply the density of the
range of $\partial_{t,\rho}\mathcal{M}+\mathcal{N}+A$. For doing
so, let $\varepsilon>0$, $u\in\dom\left(\partial_{t,\rho}\mathcal{M}+\mathcal{N}+A\right)$
and $f\in\dom\left(\left(\partial_{t,\rho}\mathcal{M}+\mathcal{N}+A\right)^{*}\right)$.
We put $f_{\varepsilon}\coloneqq(1+\varepsilon\partial_{t,\rho}^{*})^{-1}f\in\dom(\partial_{t,\rho}^{3/2})$.
Then we compute
\begin{align*}
 & \langle\left(\partial_{t,\rho}\mathcal{M}+\mathcal{N}+A\right)u,f_{\varepsilon}\rangle_{\rho,1/2}\\
 & =\langle(1+\varepsilon\partial_{t,\rho})^{-1}\left(\partial_{t,\rho}\mathcal{M}+\mathcal{N}+A\right)u,f\rangle_{\rho,1/2}\\
 & =\langle\left(\partial_{t,\rho}\mathcal{M}+\mathcal{N}+A\right)(1+\varepsilon\partial_{t,\rho})^{-1}u-\\
 & \quad\quad-\left[\partial_{t,\rho}\mathcal{M},(1+\varepsilon\partial_{t,\rho})^{-1}\right]u-\left[\mathcal{N},(1+\varepsilon\partial_{t,\rho})^{-1}\right]u,f\rangle_{\rho,1/2}.
\end{align*}
Thus, invoking \prettyref{lem:12comm} (c), we obtain that 
\[
f_{\varepsilon}\in\dom\left(\partial_{t,\rho}\mathcal{M}+\mathcal{N}+A\right)^{*}
\]
and
\begin{align*}
 & \left(\partial_{t,\rho}\mathcal{M}+\mathcal{N}+A\right)^{*}f_{\varepsilon}\\
 & =(1+\varepsilon\partial_{t,\rho}^{*})^{-1}\left(\partial_{t,\rho}\mathcal{M}+\mathcal{N}+A\right)^{*}f-\left[\partial_{t,\rho}\mathcal{M},(1+\varepsilon\partial_{t,\rho})^{-1}\right]^{*}f-\left[\mathcal{N},(1+\varepsilon\partial_{t,\rho})^{-1}\right]^{*}f.
\end{align*}
Since $\left[\partial_{t,\rho}\mathcal{M},(1+\varepsilon\partial_{t,\rho})^{-1}\right]^{*}f+\left[\mathcal{N},(1+\varepsilon\partial_{t,\rho})^{-1}\right]^{*}f\to0$
weakly in $H_{\rho}^{1/2}(\R;H)$ as $\varepsilon\to0+$ by \prettyref{lem:12comm}
(b) and (c), we infer that $H_{\rho}^{3/2}(\R;H)\cap\dom\left((\partial_{t,\rho}\mathcal{M}+\mathcal{N}+A)^{\ast}\right)$
is a core for $\left(\partial_{t,\rho}\mathcal{M}+\mathcal{N}+A\right)^{*}$
(note that $f_{\varepsilon}\to f$ in $H_{\rho}^{1/2}(\R;H)$ by \prettyref{lem:approx}). 

Next, we prove that $H_{\rho}^{3/2}(\R;H)\cap\dom\left((\partial_{t,\rho}\mathcal{M}+\mathcal{N}+A)^{\ast}\right)\subseteq H_{\rho}^{1/2}(\R;\dom(A)).$
For doing so, let $f\in H_{\rho}^{3/2}(\R;H)\cap\dom\left((\partial_{t,\rho}\mathcal{M}+\mathcal{N}+A)^{\ast}\right)$
and $\psi\in H_{\rho}^{3/2}(\R;\dom(A)).$ Then $\psi\in\dom(\partial_{t,\rho}\mathcal{M})$
by \prettyref{prop:range_d0M0} and we compute 
\begin{align*}
\langle A\psi,f\rangle_{\rho,1/2} & =\langle(\partial_{t,\rho}\mathcal{M}+\mathcal{N}+A)\psi,f\rangle_{\rho,1/2}-\langle(\partial_{t,\rho}\mathcal{M}+\mathcal{N})\psi,f\rangle_{\rho,1/2}\\
 & =\langle\psi,(\partial_{t,\rho}\mathcal{M}+\mathcal{N}+A)^{\ast}f\rangle_{\rho,1/2}-\langle\psi,(\mathcal{M}^{\ast}\partial_{t,\rho}^{\ast}+\mathcal{N}^{\ast})f\rangle_{\rho,1/2},
\end{align*}
where we have used $\mathcal{M}\in L(H_{\rho}^{1/2}(\R;H)),$ see
\prettyref{rem:interpolation} (b). Since $H_{\rho}^{3/2}(\R;\dom(A))$
is dense in $H_{\rho}^{1/2}(\R;\dom(A))$, we infer that $H_{\rho}^{3/2}(\R;\dom(A))$
is a core for $A$. Thus, $f\in H_{\rho}^{1/2}(\R;\dom(A))$ with
\[
-Af=A^{\ast}f=(\partial_{t,\rho}\mathcal{M}+\mathcal{N}+A)^{\ast}f-(\mathcal{M}^{\ast}\partial_{t,\rho}^{\ast}+\mathcal{N}^{\ast})f.
\]
Hence, for $f\in H_{\rho}^{3/2}(\R;H)\cap\dom\left((\partial_{t,\rho}\mathcal{M}+\mathcal{N}+A)^{\ast}\right)$
we compute
\begin{align*}
\Re\langle\left(\partial_{t,\rho}\mathcal{M}+\mathcal{N}+A\right)^{*}f,f\rangle_{\rho,\frac{1}{2}} & =\Re\langle\mathcal{M}^{*}\partial_{t,\rho}^{*}f+\mathcal{N}^{*}f-Af,f\rangle_{\rho,\frac{1}{2}}\\
 & =\Re\langle f,\left(\partial_{t,\rho}\mathcal{M}+\mathcal{N}\right)f\rangle_{\rho,\frac{1}{2}} \geq c\langle f,f\rangle_{\rho,\frac{1}{2}}
\end{align*}
and since the set $H_{\rho}^{3/2}(\R;H)\cap\dom\left((\partial_{t,\rho}\mathcal{M}+\mathcal{N}+A)^{\ast}\right)$
is a core for $(\partial_{t,\rho}\mathcal{M}+\mathcal{N}+A)^{\ast}$,
the latter particularily implies that $\left(\partial_{t,\rho}\mathcal{M}+\mathcal{N}+A\right)^{*}$
is one-to-one. 
\end{proof}

\begin{cor}
\label{cor:density12}For $k\geq1$, the set
\[
\left(\partial_{t,\rho}\mathcal{M}+\mathcal{N}+A\right)[H_{\rho}^{k+1/2}(\R;\dom(A))]
\]
is dense in $H_{\rho}^{1/2}(\R;H)$. 
\end{cor}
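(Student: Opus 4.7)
The strategy is to derive the corollary from \prettyref{thm:sol_theory_1/2} by a standard smoothing argument. Since the closure of $T\coloneqq\partial_{t,\rho}\mathcal{M}+\mathcal{N}+A$ on $H_{\rho}^{1/2}(\R;H)$ is continuously invertible, its range is all of $H_{\rho}^{1/2}(\R;H)$; in particular, the range of $T$ itself (before closure) is dense in $H_{\rho}^{1/2}(\R;H)$. Hence it suffices to show that every $u\in\dom(T)=\dom(\partial_{t,\rho}\mathcal{M})\cap H_{\rho}^{1/2}(\R;\dom(A))$ can be approximated by a sequence in the smaller set $H_{\rho}^{k+1/2}(\R;\dom(A))$ in such a way that the image under $T$ converges in $H_{\rho}^{1/2}(\R;H)$ to $Tu$.

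The natural candidates are the smoothed elements $u_{\varepsilon}\coloneqq R_{\varepsilon}^{k}u$ with $R_{\varepsilon}\coloneqq(1+\varepsilon\partial_{t,\rho})^{-1}$. First I would verify that $u_{\varepsilon}\in H_{\rho}^{k+1/2}(\R;\dom(A))$: iterating $R_{\varepsilon}$ raises the time-regularity index by $k$ starting from $H_{\rho}^{1/2}(\R;H)$, and since $R_{\varepsilon}$ is a function of $\partial_{t,\rho}$ alone it commutes with the canonical spatial-only extension of $A$, so that $Au_{\varepsilon}=R_{\varepsilon}^{k}Au\in H_{\rho}^{k+1/2}(\R;H)$ inherits the same time regularity as $u_{\varepsilon}$.

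It then remains to check $Tu_{\varepsilon}\to Tu$ in $H_{\rho}^{1/2}(\R;H)$ term by term. For the $A$-term this is immediate: $Au_{\varepsilon}=R_{\varepsilon}^{k}Au\to Au$ by \prettyref{lem:approx} applied in $H_{\rho}^{1/2}(\R;H)$ to $Au$. For the $\mathcal{N}$-term we have $u_{\varepsilon}\to u$ in $H_{\rho}^{1/2}(\R;H)$ by \prettyref{lem:approx}, and $\mathcal{N}\in L(H_{\rho}^{1/2}(\R;H))$, so $\mathcal{N}u_{\varepsilon}\to\mathcal{N}u$. The only nontrivial contribution is the $\partial_{t,\rho}\mathcal{M}$-term, and it is delivered exactly by \prettyref{lem:core_M}. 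Thus the main obstacle, the noncommutativity of $\mathcal{M}$ with $R_{\varepsilon}$, has already been absorbed into \prettyref{lem:core_M} by way of the commutator identity in \prettyref{lem:12comm}(c); with that in hand, the present corollary is essentially bookkeeping.
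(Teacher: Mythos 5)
Your proof is correct and takes essentially the same approach as the paper: both first use Theorem \ref{thm:sol_theory_1/2} to see that the range of $\partial_{t,\rho}\mathcal{M}+\mathcal{N}+A$ (before closure) is dense in $H_{\rho}^{1/2}(\R;H)$, and then smooth an arbitrary element of the domain with $R_{\varepsilon}^{k}$, handling $\partial_{t,\rho}\mathcal{M}$ via Lemma \ref{lem:core_M} and $A$, $\mathcal{N}$ via Lemma \ref{lem:approx} and boundedness on $H_{\rho}^{1/2}$, respectively.
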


\begin{proof}
Let $f\in H_{\rho}^{1/2}(\R;H)$ and set 
\[
u\coloneqq\left(\overline{\partial_{t,\rho}\mathcal{M}+\mathcal{N}+A}\right)^{-1}f\in H_{\rho}^{1/2}(\R;H).
\]
Note that such a $u$ exists by \prettyref{thm:sol_theory_1/2}. Hence,
we find a sequence $(u_{n})_{n\in\N}$ in $\dom(\partial_{t,\rho}\mathcal{M})\cap\dom(A)$
such that 
\[
u_{n}\to u\text{ and }\left(\partial_{t,\rho}\mathcal{M}+\mathcal{N}+A\right)u_{n}\to f\text{ as }n\to\infty
\]
in $H_{\rho}^{1/2}(\R;H).$ We now define $v_{n,\varepsilon}\coloneqq(1+\varepsilon\partial_{t,\rho})^{-k}u_{n}\in H_{\rho}^{k+1/2}(\R;\dom(A))$
for $n\in\N.$ By \prettyref{lem:approx}, $Av_{n,\varepsilon}\to Au_{n}$
as well as $\mathcal{N}v_{n,\varepsilon}\to\mathcal{N}u_{n}$ as $\varepsilon\to0$
and thus, it suffices to show 
\[
\partial_{t,\rho}\mathcal{M}v_{n,\varepsilon}\to\partial_{t,\rho}\mathcal{M}u_{n}\text{ as }\varepsilon\to0.
\]
This, however, follows from \prettyref{lem:core_M} and thus, the
assertion follows.
\end{proof}
\begin{cor}
\label{cor:densityresult}Let $H=H_{0}\oplus H_{1}$ for Hilbert spaces
$H_{0}$ and $H_{1}.$ Then for $k\geq1$, the set
\[
\left(\partial_{t,\rho}\mathcal{M}+\mathcal{N}+A\right)[H_{\rho}^{k+1/2}(\R;\dom(A))]
\]
is dense in $L_{2,\rho}(\R;H_{0})\oplus H_{\rho}^{1/2}(\R;H_{1}).$
\end{cor}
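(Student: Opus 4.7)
The plan is to reduce the claim to \prettyref{cor:density12} by observing that $H_{\rho}^{1/2}(\R;H)$, identified with $H_{\rho}^{1/2}(\R;H_{0})\oplus H_{\rho}^{1/2}(\R;H_{1})$, sits densely and continuously in the target product space.

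First, I would record a continuous embedding: since $\Re\partial_{t,\rho}^{1/2}\geq\rho^{1/2}$ by \prettyref{prop:fractional_integral}, one has $\|u\|_{\rho,0}\leq\rho^{-1/2}\|u\|_{\rho,1/2}$ for all $u\in H_{\rho}^{1/2}(\R;H_{0})$, so the inclusion $H_{\rho}^{1/2}(\R;H_{0})\hookrightarrow L_{2,\rho}(\R;H_{0})$ is continuous. Consequently
\[
H_{\rho}^{1/2}(\R;H)=H_{\rho}^{1/2}(\R;H_{0})\oplus H_{\rho}^{1/2}(\R;H_{1})\hookrightarrow L_{2,\rho}(\R;H_{0})\oplus H_{\rho}^{1/2}(\R;H_{1})
\]
continuously.

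Second, I would show that $H_{\rho}^{1/2}(\R;H)$ is dense in $L_{2,\rho}(\R;H_{0})\oplus H_{\rho}^{1/2}(\R;H_{1})$. Given $(f_{0},f_{1})$ in the product space and $\varepsilon>0$, the density of $C_{c}^{\infty}(\R;H_{0})\subseteq H_{\rho}^{1/2}(\R;H_{0})$ in $L_{2,\rho}(\R;H_{0})$ yields some $\tilde{f}_{0}\in H_{\rho}^{1/2}(\R;H_{0})$ with $\|f_{0}-\tilde{f}_{0}\|_{\rho,0}<\varepsilon$, so that $(\tilde{f}_{0},f_{1})\in H_{\rho}^{1/2}(\R;H)$ approximates $(f_{0},f_{1})$ in the target norm.

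Third, I would apply \prettyref{cor:density12} to $(\tilde{f}_{0},f_{1})\in H_{\rho}^{1/2}(\R;H)$ to obtain $u\in H_{\rho}^{k+1/2}(\R;\dom(A))$ with $\|(\partial_{t,\rho}\mathcal{M}+\mathcal{N}+A)u-(\tilde{f}_{0},f_{1})\|_{\rho,1/2}<\varepsilon$. By the first step, this difference is also small in the product norm, and combining with the second step concludes the proof via the triangle inequality. The argument is entirely routine; no substantial obstacle is expected.
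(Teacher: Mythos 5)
Your proposal is correct and follows essentially the same route as the paper's proof: apply \prettyref{cor:density12} to get density in $H_{\rho}^{1/2}(\R;H)$, then use that $H_{\rho}^{1/2}(\R;H)$ embeds continuously and densely into $L_{2,\rho}(\R;H_{0})\oplus H_{\rho}^{1/2}(\R;H_{1})$. The paper states this in one sentence; you simply spell out the continuity (via $\Re\partial_{t,\rho}^{1/2}\geq\rho^{1/2}$, hence $\|u\|_{\rho,0}\leq\rho^{-1/2}\|u\|_{\rho,1/2}$) and the density of the embedding, which is fine.
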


\begin{proof}
By \prettyref{cor:density12}, the set 
\[
\left(\partial_{t,\rho}\mathcal{M}+\mathcal{N}+A\right)[H_{\rho}^{k+1/2}(\R;\dom(A))]
\]
is dense in $H_{\rho}^{1/2}(\R;H)$, which continuously and densely
embeds into $L_{2,\rho}(\R;H_{0})\oplus H_{\rho}^{1/2}(\R;H_{1})$.
This shows the assertion. 
\end{proof}

\section{Maximal regularity for evolutionary equations\label{sec:Maximal-regularity-forEE}}

In the following we provide our main result: a criterion for maximal
regularity for evolutionary equations. In a nutshell this criterion
reads:
\begin{center}
\emph{Well-posedness in both $L_{2,\rho}(\R;H)$ and $H_{\rho}^{1/2}(\R;H)$
together with a parabolic-like structure implies maximal regularity.}
\par\end{center}

Throughout, let $H_{0}$ and $H_{1}$ be two complex Hilbert spaces
and set $H\coloneqq H_{0}\oplus H_{1}.$ Moreover, let $C:\dom(C)\subseteq H_{0}\to H_{1}$
densely defined closed and linear and set 
\begin{equation}
A=\left(\begin{array}{cc}
0 & -C^{*}\\
C & 0
\end{array}\right)\label{eq:A is C star C}
\end{equation}
(which easily can be verified to be skew-selfadjoint in $H$). Finally,
we assume that $\mathcal{M}$ and $\mathcal{N}$ have the form 
\begin{equation}
\mathcal{M}=\left(\begin{array}{cc}
\mathcal{M}_{00} & 0\\
0 & 0
\end{array}\right)\label{eq:M is M00}
\end{equation}
 as well as 
\begin{equation}
\mathcal{N}=\left(\begin{array}{cc}
\mathcal{N}_{00} & \mathcal{N}_{01}\\
\mathcal{N}_{10} & \mathcal{N}_{11}
\end{array}\right)\label{eq:N is Matrix}
\end{equation}
 with appropriate linear operators in $L(L_{2,\rho}(\R;H_{j}),L_{2,\rho}(\R;H_{i}))$,
$i,j\in\{0,1\}$. 
\begin{thm}
\label{thm:main}Let $A,\mathcal{M}$ and $\mathcal{N}$ be as in
\prettyref{eq:A is C star C}, \prettyref{eq:M is M00}, and \prettyref{eq:N is Matrix}
and assume there is $\mathcal{M}'\in L(L_{2,\rho}(\R;H))$ with 
\begin{equation}
\mathcal{M}\partial_{t,\rho}\subseteq\partial_{t,\rho}\mathcal{M}-\mathcal{M}'.\label{eq:commM}
\end{equation}
Assume, in addition, that 
\[
\mathcal{M}',\mathcal{N}\in L(H_{\rho}^{1/2}(\R;H)).
\]
We shall assume the positive definiteness conditions
\begin{equation}
\Re\langle\mathcal{M}_{00}\phi,\phi\rangle_{\rho,0}\geq c\langle\phi,\phi\rangle_{\rho,0},\label{eq:M00pd}
\end{equation}
\begin{equation}
\Re\langle\left(\partial_{t,\rho}\mathcal{M}+\mathcal{N}\right)\phi,\phi\rangle_{\rho,0}\geq c\langle\phi,\phi\rangle_{\rho,0},\label{eq:pd0}
\end{equation}
and
\begin{equation}
\Re\langle\left(\partial_{t,\rho}\mathcal{M}+\mathcal{N}\right)\phi,\phi\rangle_{\rho,\frac{1}{2}}\geq c\langle\phi,\phi\rangle_{\rho,\frac{1}{2}}\label{eq:pd12}
\end{equation}
for some $c>0$ and all $\phi\in H_{\rho}^{3/2}(\R;H).$ Let $\mathcal{S}_{\rho}\coloneqq\left(\overline{\partial_{t,\rho}\mathcal{M}+\mathcal{N}+A}\right)^{-1}\in L(L_{2,\rho}(\R;H))\cap L(H_{\rho}^{1/2}(\R;H))$
(cp. \prettyref{thm:sol_theory_basic} and \prettyref{thm:sol_theory_1/2}).
Then 
\[
\mathcal{S}_{\rho}[L_{2,\rho}(\R;H_{0})\times H_{\rho}^{1/2}(\R;H_{1})]\subseteq\left(H_{\rho}^{1}(\R;H_{0})\cap L_{2,\rho}(\R;\dom(C))\right)\times L_{2,\rho}(\R;\dom(C^{\ast}));
\]
that is, for $f\in L_{2,\rho}(\R;H_{0})$ and $g\in H_{\rho}^{1/2}(\R;H_{1})$
and $(u,v)\in L_{2,\rho}(\R;H)$ satisfying 
\[
\overline{\left(\partial_{t,\rho}\mathcal{M}+\mathcal{N}+A\right)}\left(\begin{array}{c}
u\\
v
\end{array}\right)=\left(\begin{array}{c}
f\\
g
\end{array}\right),
\]
we have
\[
u\in H_{\rho}^{1}(\R;H_{0})\cap L_{2,\rho}(\R;\dom(C))\text{ and }v\in L_{2,\rho}(\R;\dom(C^{\ast})).
\]
Moreover, we have
\[
u\in H_{\rho}^{1/2}(\R;\dom(C)).
\]
\end{thm}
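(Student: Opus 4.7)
The approach is an a priori estimate on smooth approximants combined with density. By \prettyref{cor:densityresult} the range of $\partial_{t,\rho}\mathcal{M}+\mathcal{N}+A$ on $H_{\rho}^{3/2}(\R;\dom(A))$ is dense in $L_{2,\rho}(\R;H_{0})\oplus H_{\rho}^{1/2}(\R;H_{1})$, so it suffices to bound $\|u\|_{\rho,1}+\|Cu\|_{\rho,1/2}+\|v\|_{\rho,1/2}$ in terms of $\|(f,g)\|_{L_{2,\rho}\oplus H_{\rho}^{1/2}}$ whenever $(u,v)\in H_{\rho}^{3/2}(\R;\dom(A))$ and $(f,g)=(\partial_{t,\rho}\mathcal{M}+\mathcal{N}+A)(u,v)$. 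The $L_{2,\rho}$-well-posedness of \prettyref{thm:sol_theory_basic} gives $\|u\|_{\rho,0}+\|v\|_{\rho,0}\lesssim\|(f,g)\|$ for free. Once $u\in H_{\rho}^{1}(\R;H_{0})$ is established, the first equation $\partial_{t,\rho}\mathcal{M}_{00}u+\mathcal{N}_{00}u+\mathcal{N}_{01}v-C^{*}v=f$ forces $C^{*}v\in L_{2,\rho}(\R;H_{1})$; the second, $Cu+\mathcal{N}_{10}u+\mathcal{N}_{11}v=g$, forces $Cu\in L_{2,\rho}(\R;H_{1})$; the $H^{1/2}$-bound on $Cu$ then upgrades $u$ to $H_{\rho}^{1/2}(\R;\dom(C))$.

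The first ingredient is an $H^{1/2}$-coercivity estimate exploiting a skew-adjoint cancellation. Rewriting the system as $(\partial_{t,\rho}\mathcal{M}+\mathcal{N})(u,v)=(f+C^{*}v,g-Cu)$ and pairing with $(u,v)$ in the $H_{\rho}^{1/2}$-inner product, I use that $C$ commutes with $\partial_{t,\rho}^{1/2}$ (since $C$ acts only in the spatial variables) to transfer the adjoint relation from $L_{2,\rho}$ into $H_{\rho}^{1/2}$: for smooth $(u,v)$, $\langle C^{*}v,u\rangle_{\rho,1/2}=\langle v,Cu\rangle_{\rho,1/2}$, so the off-diagonal contributions cancel when taking real parts. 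Invoking \prettyref{eq:pd12} together with $\langle f,u\rangle_{\rho,1/2}=\langle f,|\partial_{t,\rho}|u\rangle_{\rho,0}$ (following from $|\partial_{t,\rho}|=(\partial_{t,\rho}^{1/2})^{*}\partial_{t,\rho}^{1/2}$ in the functional calculus of \prettyref{prop:elementary facts td}) yields
\[
c(\|u\|_{\rho,1/2}^{2}+\|v\|_{\rho,1/2}^{2})\leq\|f\|_{\rho,0}\|u\|_{\rho,1}+\|g\|_{\rho,1/2}\|v\|_{\rho,1/2},
\]
from which Young's inequality gives, for any $\varepsilon>0$,
\[
\|u\|_{\rho,1/2}^{2}+\|v\|_{\rho,1/2}^{2}\lesssim\varepsilon\|u\|_{\rho,1}^{2}+\|(f,g)\|_{L_{2,\rho}\oplus H_{\rho}^{1/2}}^{2}.
\]

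The second ingredient is the $L^{2}$-style estimate for $\|u\|_{\rho,1}$. Testing the first equation with $\partial_{t,\rho}u$ and using \prettyref{eq:M00pd} together with $\partial_{t,\rho}\mathcal{M}_{00}=\mathcal{M}_{00}\partial_{t,\rho}+\mathcal{M}_{00}'$ (the $(0,0)$-block of \prettyref{eq:commM}) gives
\[
c\|u\|_{\rho,1}^{2}\lesssim\|(f,g)\|\,\|u\|_{\rho,1}+|\Re\langle C^{*}v,\partial_{t,\rho}u\rangle_{\rho,0}|,
\]
and the cross term is reshuffled via $C\partial_{t,\rho}\subseteq\partial_{t,\rho}C$ into $|\Re\langle v,\partial_{t,\rho}Cu\rangle_{\rho,0}|\leq\|v\|_{\rho,1/2}\|Cu\|_{\rho,1/2}$. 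The second equation gives $\|Cu\|_{\rho,1/2}\lesssim\|(f,g)\|+\|u\|_{\rho,1/2}+\|v\|_{\rho,1/2}$, so the previous $H^{1/2}$-bound turns $\|v\|_{\rho,1/2}\|Cu\|_{\rho,1/2}$ into a term of order $\|(f,g)\|^{2}+\varepsilon\|u\|_{\rho,1}^{2}$. Choosing $\varepsilon$ small, absorbing the $\varepsilon\|u\|_{\rho,1}^{2}$ into the left-hand side, and applying Young's inequality to $\|(f,g)\|\|u\|_{\rho,1}$ yields $\|u\|_{\rho,1}\lesssim\|(f,g)\|$, from which the full a priori estimate and hence the theorem follow by density. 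The main obstacle I anticipate lies in the $H^{1/2}$-cancellation of the first ingredient: verifying $\Re\langle C^{*}v,u\rangle_{\rho,1/2}=\Re\langle v,Cu\rangle_{\rho,1/2}$ for smooth data hinges on commuting $C$ with $\partial_{t,\rho}^{1/2}$, and without this identity the cross term $\Re\langle Cu,v\rangle_{\rho,1/2}$ could not be absorbed into the coercivity of $\mathcal{N}_{11}$ without re-introducing $\|Cu\|_{\rho,1/2}$ circularly.
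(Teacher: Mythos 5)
Your proof is correct, and it takes a genuinely different (and in some respects slicker) route than the paper's. The key divergence is your \emph{first ingredient}: you pair the full system $(\partial_{t,\rho}\mathcal{M}+\mathcal{N}+A)(u,v)=(f,g)$ with $(u,v)$ in the $H_{\rho}^{1/2}$-inner product, use the skew-symmetry of $A$ (valid in $H_{\rho}^{1/2}$ because $C$ acts only in space and hence commutes with $\partial_{t,\rho}^{1/2}$) to cancel the spatial terms, invoke \prettyref{eq:pd12}, and crucially exploit the identity $\langle f,u\rangle_{\rho,1/2}=\langle f,|\partial_{t,\rho}|u\rangle_{\rho,0}\le\|f\|_{\rho,0}\|u\|_{\rho,1}$ so that only the $L_{2,\rho}$-norm of $f$ enters. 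This gives, for any $\varepsilon>0$, the a priori bound $\|u\|_{\rho,1/2}^2+\|v\|_{\rho,1/2}^2\lesssim\varepsilon\|u\|_{\rho,1}^2+C(\varepsilon)\|(f,g)\|^2$, which \emph{decouples} $v$ from $u$ at the $H^{1/2}$-level. The paper does not have this estimate; instead, it repeatedly substitutes $v=\mathcal{N}_{11}^{-1}(g-Cu-\mathcal{N}_{10}u)$ wherever $v$ appears, which reintroduces $Cu$ and creates a circular coupling between $\|u\|_{\rho,1}$ and $\|Cu\|_{\rho,1/2}$. To untangle this, the paper needs the $H^{1/2}$-coercivity of $\mathcal{N}_{11}^{-1}$ (\prettyref{lem:N11}), the quasi-coercivity of $\partial_{t,\rho}\mathcal{M}_{00}$ in $H^{1/2}$ (\prettyref{lem:M00}), a quadratic inequality in $\|Cu\|_{\rho,1/2}$, and the polynomial-root estimate \prettyref{lem:estimate_root}. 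Your approach bypasses all of this with a single Young-absorption: once $\|v\|_{\rho,1/2}$ is controlled, the cross term $\Re\langle C^{*}v,\partial_{t,\rho}u\rangle_{\rho,0}\le\|v\|_{\rho,1/2}\|Cu\|_{\rho,1/2}$ and the second equation give $\|v\|_{\rho,1/2}\|Cu\|_{\rho,1/2}\lesssim C(\varepsilon)\|(f,g)\|^2+\varepsilon\|u\|_{\rho,1}^2$, and $\varepsilon$ small closes the estimate. Note also that your obstacle worry at the end is unfounded: $\langle C^{*}v,u\rangle_{\rho,1/2}=\langle v,Cu\rangle_{\rho,1/2}$ indeed holds for $(u,v)\in H_{\rho}^{3/2}(\R;\dom(A))$ precisely because $C$ and $C^*$ act only in the spatial variable and so commute with $\partial_{t,\rho}^{1/2}$ — the paper implicitly uses this same commutativity several times in its own proof.
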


\begin{rem}
As we shall see in the examples section, the above nutshell description
of the \prettyref{thm:main} is visible as follows: \begin{itemize}

\item Well-posedness in $L_{2,\rho}(\mathbb{R};H)$ is guaranteed
by assumption \prettyref{eq:pd0}; see \prettyref{thm:sol_theory_basic}.

\item Well-posedness in $H_{\rho}^{1/2}(\mathbb{R};H)$ is guaranteed
by assumption \prettyref{eq:pd12}; see \prettyref{thm:sol_theory_1/2}.

\item The parabolic-like structure is visible in the block matrix
structure \prettyref{eq:A is C star C}, \prettyref{eq:M is M00}
and the positive definiteness condition \prettyref{eq:M00pd}.

\end{itemize}
\end{rem}

\begin{rem}
\label{rem:regPhen}(a) As in \cite{Auscher_Egert2016,Dier_Zacher2017},
we recover the same additional regularity phenomenon $u\in H_{\rho}^{1/2}(\mathbb{R};\dom(C)),$
which is not expected for maximal regularity of evolutionary equations.
In fact, as the proof of \prettyref{thm:main} will show an estimate
for $\|Cu\|_{\rho,\frac{1}{2}}$ is key for obtaining the main result.

(b) Note that $u\in H_{\rho}^{1/2}(\mathbb{R};\dom(C))$ also has
a consequence on the time-regularity of $v$. Indeed, taking $(f,g)$
as right-hand sides as in \prettyref{thm:main}, we see that $(u,v)$
satisfies
\begin{align*}
\partial_{t,\rho}\mathcal{M}_{00}u+\mathcal{N}_{00}u+\mathcal{N}_{01}v-C^{*}v & =f\\
\mathcal{N}_{10}u+\mathcal{N}_{11}v+Cu & =g.
\end{align*}
Focussing on the second line and multiplying by $\mathcal{N}_{11}^{-1}$,
we infer 
\[
v=\mathcal{N}_{11}^{-1}\left(g-Cu-\mathcal{N}_{11}u\right).
\]
Next, since $u\in H_{\rho}^{1}(\mathbb{R};H_{0})\subseteq H_{\rho}^{1/2}(\mathbb{R};H_{0})$,
$Cu,g\in H_{\rho}^{1/2}(\mathbb{R};H_{1})$ and both $\mathcal{N}_{01}$
and $\mathcal{N}_{11}^{-1}$ leave $H_{\rho}^{1/2}$-regular mappings
invariant (see also \prettyref{lem:N11} below), we infer $v\in H_{\rho}^{1/2}(\mathbb{R};H_{1})$.
We summarise all the regularity results in the next statement.
\end{rem}

\begin{cor}
\label{cor:main_result1}Under the assumptions of \prettyref{thm:main},
let $f\in L_{2,\rho}(\R;H_{0})$, $g\in H_{\rho}^{1/2}(\R;H_{1})$
and $(u,v)\in L_{2,\rho}(\R;H)$ satisfying 
\[
\overline{\left(\partial_{t,\rho}\mathcal{M}+\mathcal{N}+A\right)}\left(\begin{array}{c}
u\\
v
\end{array}\right)=\left(\begin{array}{c}
f\\
g
\end{array}\right).
\]
Then
\begin{align*}
u & \in H_{\rho}^{1}(\mathbb{R};H_{0})\cap H_{\rho}^{1/2}(\mathbb{R};\dom(C))\text{ and }\\
v & \in L_{2,\rho}(\mathbb{R};\dom(C^{*}))\cap H_{\rho}^{1/2}(\mathbb{R};H_{1}).
\end{align*}
\end{cor}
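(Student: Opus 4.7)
The plan is to observe that \prettyref{thm:main} already delivers every assertion of the corollary \emph{except} $v\in H_{\rho}^{1/2}(\R;H_{1})$; the remaining time-regularity of $v$ is then read off directly from the second component of the system as indicated in \prettyref{rem:regPhen}(b).

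More precisely, I would first invoke \prettyref{thm:main} to obtain $u\in H_{\rho}^{1}(\R;H_{0})\cap L_{2,\rho}(\R;\dom(C))$, the additional regularity $u\in H_{\rho}^{1/2}(\R;\dom(C))$, and $v\in L_{2,\rho}(\R;\dom(C^{*}))$. In particular $(u,v)$ lies in the domain of $\partial_{t,\rho}\mathcal{M}+\mathcal{N}+A$ as an unclosed operator in $L_{2,\rho}(\R;H)$, so the equation $\overline{\left(\partial_{t,\rho}\mathcal{M}+\mathcal{N}+A\right)}(u,v)=(f,g)$ may be interpreted row-wise, yielding in the second component the identity
\[
Cu+\mathcal{N}_{10}u+\mathcal{N}_{11}v=g
\]
as an equation in $L_{2,\rho}(\R;H_{1})$.

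The key step is then to solve for $v$ and track where each summand lives. Under the positive definiteness hypothesis \prettyref{eq:pd0}, the component $\mathcal{N}_{11}$ is boundedly invertible on $L_{2,\rho}(\R;H_{1})$, and the analogous hypothesis \prettyref{eq:pd12} together with $\mathcal{N}\in L(H_{\rho}^{1/2}(\R;H))$ yields the corresponding $H_{\rho}^{1/2}$-bounded invertibility (this is the role played by the separate lemma on $\mathcal{N}_{11}$ alluded to in \prettyref{rem:regPhen}(b)). Granted this, one writes
\[
v=\mathcal{N}_{11}^{-1}\bigl(g-Cu-\mathcal{N}_{10}u\bigr)
\]
and checks each term in $H_{\rho}^{1/2}(\R;H_{1})$: the right-hand side $g$ is in $H_{\rho}^{1/2}(\R;H_{1})$ by assumption; $Cu\in H_{\rho}^{1/2}(\R;H_{1})$ because $u\in H_{\rho}^{1/2}(\R;\dom(C))$ by \prettyref{thm:main}; and $\mathcal{N}_{10}u\in H_{\rho}^{1/2}(\R;H_{1})$ because $u\in H_{\rho}^{1}(\R;H_{0})\subseteq H_{\rho}^{1/2}(\R;H_{0})$ and $\mathcal{N}_{10}$, being a block of $\mathcal{N}\in L(H_{\rho}^{1/2}(\R;H))$, maps $H_{\rho}^{1/2}(\R;H_{0})$ continuously into $H_{\rho}^{1/2}(\R;H_{1})$. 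Applying $\mathcal{N}_{11}^{-1}$ then gives $v\in H_{\rho}^{1/2}(\R;H_{1})$, completing the proof.

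I do not anticipate a real obstacle here, since the work has all been done in \prettyref{thm:main}; the only subtlety is the $H_{\rho}^{1/2}$-boundedness of $\mathcal{N}_{11}^{-1}$, which is a standard consequence of the coercivity \prettyref{eq:pd12} combined with $\mathcal{N}\in L(H_{\rho}^{1/2}(\R;H))$ and is presumably recorded as a lemma at the appropriate place. Once this is cited, the corollary is simply an algebraic rearrangement of the second equation followed by the mapping properties noted above.
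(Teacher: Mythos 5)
Your proposal is correct and matches the paper's own argument: the paper also invokes \prettyref{thm:main} for all assertions except $v\in H_{\rho}^{1/2}(\R;H_{1})$, then reads the second component, solves $v=\mathcal{N}_{11}^{-1}(g-Cu-\mathcal{N}_{10}u)$, and uses $\mathcal{N}_{11}^{-1}\in L(H_{\rho}^{1/2}(\R;H_{1}))$ (recorded in \prettyref{lem:N11}) together with the established regularity of $u$ and $Cu$. The only difference is that you correctly write $\mathcal{N}_{10}u$ whereas the paper's \prettyref{rem:regPhen}(b) has a typographical slip writing $\mathcal{N}_{11}u$ and $\mathcal{N}_{01}$ in places where $\mathcal{N}_{10}$ is meant.
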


\begin{rem}
\label{rem:(a)contestim(b)2ndorder}(a) Note that the regularity statement
in the latter result is also accompanied with the corresponding continuity
statement; that is, there exists a constant $\kappa\geq0$ such that
for all $f\in L_{2,\rho}(\R;H_{0}),g\in H_{\rho}^{1/2}(\R;H_{1})$
with $\mathcal{S}_{\rho}(f,g)=(u,v)$ we have
\[
\|u\|_{\rho,1}+\|Cu\|_{\rho,\frac{1}{2}}+\|C^{*}v\|_{\rho,0}+\|v\|_{\rho,\frac{1}{2}}\leq\kappa\left(\|f\|_{\rho,0}+\|g\|_{\rho,\frac{1}{2}}\right).
\]
Moreover, note that, as a consequence, the closure bar in the formulation
of the evolutionary equation can be omitted so that, indeed,
\[
\overline{\left(\partial_{t,\rho}\mathcal{M}+\mathcal{N}+A\right)}^{-1}\left(\begin{array}{c}
f\\
g
\end{array}\right)=\left(\partial_{t,\rho}\mathcal{M}+\mathcal{N}+A\right)^{-1}\left(\begin{array}{c}
f\\
g
\end{array}\right),
\]
addressing \prettyref{prob:LionsEE}.

(b) We note here that \prettyref{cor:main_result1} is in fact a result
on maximal regularity for the special case of $g=0$ and if the evolutionary
equation is viewed as a `scalar' equation in the following sense.
Assume $g=0$ and $f\in L_{2,\rho}(\mathbb{R};H_{0})$, then as in
\prettyref{rem:regPhen}(b), we have seen that then
\begin{align*}
\partial_{t,\rho}\mathcal{M}_{00}u+\mathcal{N}_{00}u+\mathcal{N}_{01}v-C^{*}v & =f\\
\mathcal{N}_{10}u+\mathcal{N}_{11}v+Cu & =0.
\end{align*}
Rearranging the second equality, we infer $v=-\mathcal{N}_{11}^{-1}(Cu+\mathcal{N}_{10}u)\in\dom(C^{*})$
and thus the first equation reads 
\begin{equation}
\partial_{t,\rho}\mathcal{M}_{00}u+\mathcal{N}_{00}u-\mathcal{N}_{01}\mathcal{N}_{11}^{-1}(Cu+\mathcal{N}_{10}u)+C^{*}\mathcal{N}_{11}^{-1}(C+\mathcal{N}_{10})u=f.\label{eq:2nd order}
\end{equation}
By \prettyref{eq:M00pd} and \prettyref{eq:commM}, it is not difficult
to see that $\dom(\partial_{t,\rho}\mathcal{M}_{00})=\dom(\partial_{t,\rho})$
and so $u$ as in \prettyref{eq:2nd order} really admits the maximal
regularity hoped for when $f\in L_{2,\rho}(\mathbb{R};H)$.
\end{rem}

In order to prove our main theorem, we need some prerequisites. 
\begin{lem}
\label{lem:N11}Assume the conditions imposed on $\mathcal{M}$ and
$\mathcal{N}$ in \prettyref{thm:main}. Then 
\[
\mathcal{N}_{11}^{-1}\in L(L_{2,\rho}(\R;H))\cap L(H_{\rho}^{1/2}(\R;H)).
\]
Moreover,
\[
\Re\langle\mathcal{N}_{11}^{-1}\phi_{1},\phi_{1}\rangle_{\rho,1/2}\geq\frac{c}{\|\mathcal{N}_{11}\|_{L(H_{\rho}^{1/2}(\R;H_{1}))}^{2}}\langle\phi_{1},\phi_{1}\rangle_{\rho,1/2}\quad(\phi_{1}\in H_{\rho}^{1/2}(\R;H_{1})).
\]
\end{lem}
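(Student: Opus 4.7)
My plan is to extract positive definiteness of $\mathcal{N}_{11}$ on both $L_{2,\rho}$ and $H_\rho^{1/2}$ by testing the hypotheses of \prettyref{thm:main} with $\phi$ in the lower component, to derive the invertibility statements via Lax--Milgram, and finally to obtain the quantitative coercivity of the inverse by a substitution.

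\textbf{Step 1 (coercivity of $\mathcal{N}_{11}$).} For any $\phi_1 \in H_\rho^{3/2}(\mathbb{R};H_1)$, take $\phi = (0,\phi_1) \in H_\rho^{3/2}(\mathbb{R};H)$. Then by \prettyref{eq:M is M00} we have $\mathcal{M}\phi = 0$, hence $\partial_{t,\rho}\mathcal{M}\phi = 0$ as well. Inserting this into \prettyref{eq:pd0} and \prettyref{eq:pd12}, I read off that
\begin{align*}
\Re\langle \mathcal{N}_{11}\phi_1,\phi_1\rangle_{\rho,0} &\geq c\|\phi_1\|_{\rho,0}^2, \\
\Re\langle \mathcal{N}_{11}\phi_1,\phi_1\rangle_{\rho,1/2} &\geq c\|\phi_1\|_{\rho,1/2}^2.
\end{align*}
Since $H_\rho^{3/2}(\mathbb{R};H_1)$ is dense in both $L_{2,\rho}(\mathbb{R};H_1)$ and $H_\rho^{1/2}(\mathbb{R};H_1)$, and $\mathcal{N}_{11}$ is bounded on each space (the boundedness on $H_\rho^{1/2}(\mathbb{R};H_1)$ follows from $\mathcal{N}\in L(H_\rho^{1/2}(\mathbb{R};H))$ after precomposing with the canonical injection and post-composing with the projection onto the second component), these estimates extend to all $\phi_1$ in the respective spaces.

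\textbf{Step 2 (invertibility).} The coercivity from Step 1, together with the boundedness of $\mathcal{N}_{11}$, puts us in the Lax--Milgram setting: $\mathcal{N}_{11}$ is injective with closed range in $L_{2,\rho}(\mathbb{R};H_1)$ and, independently, in $H_\rho^{1/2}(\mathbb{R};H_1)$. Coercivity of the adjoint (obtained by taking complex conjugates in the same estimates) gives density of the range in both settings. Hence $\mathcal{N}_{11}$ is boundedly invertible in each space, yielding $\mathcal{N}_{11}^{-1} \in L(L_{2,\rho}(\mathbb{R};H_1))\cap L(H_\rho^{1/2}(\mathbb{R};H_1))$; one then embeds this into the direct-sum operator on $H$ to obtain the statement as written.

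\textbf{Step 3 (coercivity of $\mathcal{N}_{11}^{-1}$ in $H_\rho^{1/2}$).} Given $\phi_1 \in H_\rho^{1/2}(\mathbb{R};H_1)$, set $\psi_1 \coloneqq \mathcal{N}_{11}^{-1}\phi_1 \in H_\rho^{1/2}(\mathbb{R};H_1)$. Then
\[
\Re\langle \mathcal{N}_{11}^{-1}\phi_1,\phi_1\rangle_{\rho,1/2} = \Re\langle \psi_1,\mathcal{N}_{11}\psi_1\rangle_{\rho,1/2} = \Re\langle \mathcal{N}_{11}\psi_1,\psi_1\rangle_{\rho,1/2} \geq c\|\psi_1\|_{\rho,1/2}^2.
\]
Since $\|\phi_1\|_{\rho,1/2} = \|\mathcal{N}_{11}\psi_1\|_{\rho,1/2} \leq \|\mathcal{N}_{11}\|_{L(H_\rho^{1/2}(\mathbb{R};H_1))}\|\psi_1\|_{\rho,1/2}$, rearranging gives the claimed lower bound.

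None of the steps is a genuine obstacle; the only mild subtlety is making sure that $\mathcal{N} \in L(H_\rho^{1/2}(\mathbb{R};H))$ really transfers to $\mathcal{N}_{11} \in L(H_\rho^{1/2}(\mathbb{R};H_1))$, which is immediate from the block structure in \prettyref{eq:N is Matrix} combined with the fact that the component projections are bounded on the scale of fractional spaces.
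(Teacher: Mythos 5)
Your proof is correct and follows essentially the same path as the paper's: test \prettyref{eq:pd0} and \prettyref{eq:pd12} on $\phi=(0,\phi_1)$ to get coercivity of $\mathcal{N}_{11}$ in both spaces, extend by density, invoke the Lax--Milgram-type argument for invertibility, and substitute $\psi_1=\mathcal{N}_{11}^{-1}\phi_1$ to transfer the coercivity constant to the inverse. The paper abbreviates Steps 2 and 3 by citing standard references, but the underlying argument is the same one you wrote out.
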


\begin{proof}
The positive definitness condition in \prettyref{eq:pd0} and \prettyref{eq:pd12}
applied for $\phi=(0,\phi_{1})$ with $\phi_{1}\in H_{\rho}^{3/2}(\R;H_{1})$
implies for $\alpha\in\{0,\frac{1}{2}\}$
\[
\Re\langle\mathcal{N}_{11}\phi_{1},\phi_{1}\rangle_{\rho,\alpha}\geq c\langle\phi_{1},\phi_{1}\rangle_{\rho,\alpha}.
\]
By density, this estimate extends to all $\phi_{1}\in H_{\rho}^{\alpha}(\R;H_{1})$.
Since $\mathcal{N}_{11}\in L(L_{2,\rho}(\R;H))\cap L(H_{\rho}^{1/2}(\R;H)),$
we deduce the first statement. A standard argument also reveals that
(see \cite[Proposition 6.2.3 (b)]{seifert2020evolutionary})
\[
\Re\langle\mathcal{N}_{11}^{-1}\phi_{1},\phi_{1}\rangle_{\rho,\alpha}\geq\frac{c}{\|\mathcal{N}_{11}\|_{L(H_{\rho}^{\alpha}(\R;H_{1}))}^{2}}\langle\phi_{1},\phi_{1}\rangle_{\rho,\alpha}\tag*{\qedhere}.
\]
\end{proof}
\begin{lem}
\label{lem:M00}Assume the conditions imposed on $\mathcal{M}$ and
$\mathcal{N}$ in \prettyref{thm:main}. Then 
\[
\Re\langle\partial_{t,\rho}\mathcal{M}_{00}\phi_{0},|\partial_{t,\rho}|\phi_{0}\rangle_{\rho,0}\geq\left(c-\|\mathcal{N}_{00}\|_{L(H_{\rho}^{1/2}(\R;H_{0}))}\right)\|\phi_{0}\|_{\rho,1/2}^{2}\quad(\phi_{0}\in H_{\rho}^{1}(\R;H_{0})).
\]
\end{lem}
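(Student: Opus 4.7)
The plan is to apply the $H_\rho^{1/2}$-positive definiteness hypothesis \prettyref{eq:pd12} to block vectors of the form $\phi=(\phi_0,0)$, rewrite the resulting $\langle\cdot,\cdot\rangle_{\rho,1/2}$-inner product as an $L_{2,\rho}$-pairing with $|\partial_{t,\rho}|$, and finally extend from $\phi_0\in H_\rho^{3/2}(\R;H_0)$ to $\phi_0\in H_\rho^1(\R;H_0)$ by a density argument.

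I would first fix $\phi_0\in H_\rho^{3/2}(\R;H_0)$ and set $\phi\coloneqq(\phi_0,0)\in H_\rho^{3/2}(\R;H)$. The block forms \prettyref{eq:M is M00}, \prettyref{eq:N is Matrix} together with \prettyref{prop:range_d0M0} give
\[
(\partial_{t,\rho}\mathcal{M}+\mathcal{N})\phi=(\partial_{t,\rho}\mathcal{M}_{00}\phi_0+\mathcal{N}_{00}\phi_0,\,\mathcal{N}_{10}\phi_0)\in H_\rho^{1/2}(\R;H),
\]
and pairing with $\phi$ in $\langle\cdot,\cdot\rangle_{\rho,1/2}$ kills the $H_1$-component. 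Thus \prettyref{eq:pd12} reads
\[
\Re\langle \partial_{t,\rho}\mathcal{M}_{00}\phi_0,\phi_0\rangle_{\rho,1/2}\geq c\|\phi_0\|_{\rho,1/2}^2-\Re\langle \mathcal{N}_{00}\phi_0,\phi_0\rangle_{\rho,1/2},
\]
and bounding the last term by $\|\mathcal{N}_{00}\|_{L(H_\rho^{1/2}(\R;H_0))}\|\phi_0\|_{\rho,1/2}^2$ already produces the constant on the right of the claimed inequality.

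The next step is to recognise that the $H_\rho^{1/2}$-pairing on the left equals a weighted $L_{2,\rho}$-pairing. More precisely, for $u\in H_\rho^{1/2}(\R;H_0)$ and $v\in H_\rho^1(\R;H_0)$ one has the identity
\[
\langle u,v\rangle_{\rho,1/2}=\langle \partial_{t,\rho}^{1/2}u,\partial_{t,\rho}^{1/2}v\rangle_{\rho,0}=\langle u,|\partial_{t,\rho}|v\rangle_{\rho,0},
\]
which follows at once from the spectral representation $\mathcal{L}_\rho\partial_{t,\rho}=(\i\m+\rho)\mathcal{L}_\rho$ of \prettyref{prop:elementary facts td}(b), since $\overline{(\i\m+\rho)^{1/2}}(\i\m+\rho)^{1/2}=|\i\m+\rho|$ and $\mathcal{L}_\rho$ is unitary. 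Applying this with $u=\partial_{t,\rho}\mathcal{M}_{00}\phi_0$ and $v=\phi_0$ yields the claim for $\phi_0\in H_\rho^{3/2}(\R;H_0)$.

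Finally I would extend to all $\phi_0\in H_\rho^1(\R;H_0)$ by density. Set $\phi_{0,\varepsilon}\coloneqq(1+\varepsilon\partial_{t,\rho})^{-1}\phi_0\in H_\rho^2(\R;H_0)\subseteq H_\rho^{3/2}(\R;H_0)$; by \prettyref{lem:approx}, $\phi_{0,\varepsilon}\to\phi_0$ in $H_\rho^1(\R;H_0)$. From this one reads off $\|\phi_{0,\varepsilon}\|_{\rho,1/2}\to\|\phi_0\|_{\rho,1/2}$, while $|\partial_{t,\rho}|\phi_{0,\varepsilon}\to|\partial_{t,\rho}|\phi_0$ in $L_{2,\rho}(\R;H_0)$ follows from the Fourier--Laplace representation. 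For the remaining factor we use the commutator identity \prettyref{eq:commM} on $H_\rho^1(\R;H)$, which gives
\[
\partial_{t,\rho}\mathcal{M}_{00}\psi=\mathcal{M}_{00}\partial_{t,\rho}\psi+\mathcal{M}_{00}'\psi\qquad(\psi\in H_\rho^1(\R;H_0));
\]
since $\mathcal{M}_{00},\mathcal{M}_{00}'\in L(L_{2,\rho}(\R;H_0))$ and $\partial_{t,\rho}\phi_{0,\varepsilon}\to\partial_{t,\rho}\phi_0$ in $L_{2,\rho}$, one obtains $\partial_{t,\rho}\mathcal{M}_{00}\phi_{0,\varepsilon}\to\partial_{t,\rho}\mathcal{M}_{00}\phi_0$ in $L_{2,\rho}(\R;H_0)$. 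Passing to the limit in the inequality proven for $\phi_{0,\varepsilon}$ yields the asserted bound. The only non-mechanical point of the argument is the identification $\langle u,v\rangle_{\rho,1/2}=\langle u,|\partial_{t,\rho}|v\rangle_{\rho,0}$, which however is immediate from the spectral representation; the rest is routine bookkeeping.
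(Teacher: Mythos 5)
Your proof is correct and follows essentially the same route as the paper's: apply the $H_\rho^{1/2}$-positivity \prettyref{eq:pd12} to block vectors $(\phi_0,0)$, identify $\langle\,\cdot\,,\,\cdot\,\rangle_{\rho,1/2}$ with $\langle\,\cdot\,,|\partial_{t,\rho}|\,\cdot\,\rangle_{\rho,0}$ via the spectral representation, and finish by density of $H_\rho^{3/2}$ in $H_\rho^1$. The only difference is that you spell out the density step with the explicit approximants $(1+\varepsilon\partial_{t,\rho})^{-1}\phi_0$ and the commutator identity for $\partial_{t,\rho}\mathcal{M}_{00}$, whereas the paper leaves that as routine.
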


\begin{proof}
We apply the condition \prettyref{eq:pd12} to $\phi=(\phi_{0},0)\in H_{\rho}^{3/2}(\R;H)$
and obtain
\[
\Re\langle\partial_{t,\rho}\mathcal{M}_{00}\phi_{0}+\mathcal{N}_{00}\phi_{0},\phi_{0}\rangle_{\rho,1/2}\geq c\|\phi_{0}\|_{\rho,1/2}^{2}.
\]
Rearranging terms, we obtain
\begin{align*}
\Re\langle\partial_{t,\rho}\mathcal{M}_{00}\phi_{0},|\partial_{t,\rho}|\phi_{0}\rangle_{\rho,0} & =\Re\langle\partial_{t,\rho}\mathcal{M}_{00}\phi_{0},\phi_{0}\rangle_{\rho,1/2}\\
 & \geq c\|\phi_{0}\|_{1/2}^{2}-\Re\langle\mathcal{N}_{00}\phi_{0},\phi_{0}\rangle_{\rho,1/2}\\
 & \geq\left(c-\|\mathcal{N}_{00}\|_{L(H_{\rho}^{1/2}(\R;H))}\right)\|\phi_{0}\|_{\rho,1/2}^{2}.
\end{align*}
Since $H_{\rho}^{3/2}(\R;H_{0})$ is dense in $H_{\rho}^{1}(\mathbb{R};H_{0})$
the assertion follows. 
\end{proof}
\begin{lem}
\label{lem:estimate_root}Let $P:\R\to\R$ be a polyomial of degree
$k\in\N$; that is, $P(x)=\sum_{i=0}^{k}a_{i}x^{i}$ for some $a_{i}\in\R$
with $a_{k}\ne0$. Let $x_{0}\in\R_{\geq0}$ with $x_{0}^{\ell}\leq P(x_{0})$
for some $\ell>k.$ Then 
\[
x_{0}\leq\max\left\{ \sum_{i=0}^{k}|a_{i}|,\left(\sum_{i=0}^{k}|a_{i}|\right)^{\frac{1}{\ell}}\right\} .
\]
\end{lem}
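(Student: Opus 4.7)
The plan is to reduce the claim to elementary case analysis after applying the triangle inequality. Set $S \coloneqq \sum_{i=0}^{k} |a_i|$; then
\[
x_0^{\ell} \leq P(x_0) \leq \sum_{i=0}^{k} |a_i|\, x_0^i.
\]
The right-hand side is bounded in a uniform way according to whether $x_0\leq 1$ or $x_0>1$, which is how the two alternatives $S^{1/\ell}$ and $S$ in the maximum will arise.

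First I would treat the case $x_0 \leq 1$. Then $x_0^i \leq 1$ for every $i\in\{0,\ldots,k\}$, so the previous inequality yields $x_0^{\ell}\leq S$, i.e.\ $x_0\leq S^{1/\ell}$. Second, for the case $x_0>1$, I would use $x_0^i\leq x_0^k$ for $i\leq k$, giving $x_0^{\ell}\leq S\, x_0^k$ and hence $x_0^{\ell-k}\leq S$. Here $\ell-k\geq 1$; if $S\leq 1$, this would force $x_0\leq 1$, contradicting $x_0>1$, so necessarily $S>1$, and then
\[
x_0 \leq S^{1/(\ell-k)} \leq S.
\]

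Combining the two cases, in every situation $x_0\leq\max\{S^{1/\ell},S\}$, which is exactly the stated bound. There is no real obstacle here: the estimate is completely elementary, the only subtlety is noticing that the second case automatically enforces $S>1$, which is what lets one replace $S^{1/(\ell-k)}$ by the cleaner upper bound $S$ in the final maximum.
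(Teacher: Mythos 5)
Correct, and essentially the same argument as the paper's: both split on whether the relevant quantity is at most $1$ or exceeds $1$ and read off the two branches of the maximum from the bound $\sum_i|a_i|x^i$. You work directly with $x_0$ and use $x_0^i\le x_0^k$ (then observe that $S>1$ is forced in the second case), whereas the paper first passes to a root $x_1\ge x_0$ of $x^\ell-P(x)$ and bounds $x_1^i\le x_1^{\ell-1}$; these differences are cosmetic.
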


\begin{proof}
Consider the polynomial $Q(x)\coloneqq x^{\ell}-P(x).$ Then $Q(x_{0})\leq0$
and $Q(x)\to\infty$ as $x\to\infty.$ Thus, there exists some $x_{1}\geq x_{0}$
with $Q(x_{1})=0.$ We estimate 
\[
x_{1}^{\ell}=P(x_{1})=\sum_{i=0}^{k}a_{i}x_{1}^{i}\leq\sum_{i=0}^{k}|a_{i}|x_{1}^{i}.
\]
We consider the cases $x_{1}\leq1$ and $x_{1}>1$ separately. Assume
first that $x_{1}\leq1$. Then $x_{1}^{\ell}\leq\sum_{i=0}^{k}|a_{i}|$
and hence, 
\[
x_{0}\leq x_{1}\leq\left(\sum_{i=0}^{k}|a_{i}|\right)^{\frac{1}{\ell}}.
\]
In the case that $x_{1}>1$ we can estimate 
\[
x_{1}^{\ell}\leq\sum_{i=0}^{k}|a_{i}|x_{1}^{i}\leq x_{1}^{\ell-1}\sum_{i=0}^{k}|a_{i}|,
\]
which yields 
\[
x_{0}\leq x_{1}\leq\sum_{i=0}^{k}|a_{i}|.\tag*{\qedhere}
\]
\end{proof}
\begin{proof}[Proof of \prettyref{thm:main}]
 First, let $(f,g)\in(\partial_{t,\rho}\mathcal{M}+\mathcal{N}+A)[H_{\rho}^{2}(\R;\dom(A))]$
and $(u,v)\coloneqq\mathcal{S}_{\rho}(f,g)$. Hence, by the uniqueness
of the solution in $L_{2,\rho}(\mathbb{R};H)$, we obtain 
\[
(u,v)\in H_{\rho}^{2}(\mathbb{R};\dom(A)).
\]
Then we can read off the equations line by line and obtain 
\begin{align*}
\partial_{t,\rho}\mathcal{M}_{00}u+\mathcal{N}_{00}u+\mathcal{N}_{01}v-C^{\ast}v & =f,\\
\mathcal{N}_{10}u+\mathcal{N}_{11}v+Cu & =g.
\end{align*}

Since $\mathcal{N}_{01}$ and $\mathcal{N}_{11}$ are bounded linear
operators mapping $H_{\rho}^{1/2}$ into $H_{\rho}^{1/2}$, we deduce
that the second line particularily implies $g\in H_{\rho}^{1/2}(\R;H_{1}).$
We estimate using \prettyref{eq:M00pd}
\begin{align*}
c\|u\|_{\rho,1}^{2} & \leq\Re\langle\mathcal{M}_{00}\partial_{t,\rho}u,\partial_{t,\rho}u\rangle_{\rho,0}\\
 & =\Re\langle\partial_{t,\rho}\mathcal{M}_{00}u-\mathcal{M}_{00}'u,\partial_{t,\rho}u\rangle_{\rho,0}\\
 & \leq\Re\langle\partial_{t,\rho}\mathcal{M}_{00}u,\partial_{t,\rho}u\rangle_{\rho,0}+\|\mathcal{M}_{00}'\|_{L(L_{2,\rho}(\mathbb{R};H_{0}))}\|u\|_{\rho,0}\|u\|_{\rho,1},
\end{align*}
where $\mathcal{M}_{00}'=[\partial_{t,\rho},\mathcal{M}_{00}]$, which
is bounded in $L(L_{2,\rho}(\mathbb{R};H_{0}))$ by \prettyref{eq:M is M00}
and \prettyref{eq:commM}. Moreover, 
\begin{align*}
 & \Re\langle\partial_{t,\rho}\mathcal{M}_{00}u,\partial_{t,\rho}u\rangle_{\rho,0}\\
 & =\Re\langle f-\mathcal{N}_{00}u-\mathcal{N}_{01}v+C^{\ast}v,\partial_{t,\rho}u\rangle_{\rho,0}\\
 & \leq(\|f\|_{\rho,0}+\|\mathcal{N}_{00}\|_{L(L_{2,\rho}(\mathbb{R};H_{0}))}\|u\|_{\rho,0}+\|\mathcal{N}_{01}\|_{L(L_{2,\rho}(\mathbb{R};H_{0}))}\|v\|_{\rho,0})\|u\|_{\rho,1}\\
 & \quad+\Re\langle C^{\ast}v,\partial_{t,\rho}u\rangle_{\rho,0}.
\end{align*}
The last term can further be estimated by (using \prettyref{lem:N11}
and \prettyref{lem:12comm})
\begin{align*}
\Re\langle C^{\ast}v,\partial_{t,\rho}u\rangle_{\rho,0} & =\Re\langle v,\partial_{t,\rho}Cu\rangle_{\rho,0}\\
 & =\Re\langle\mathcal{N}_{11}^{-1}(g-Cu-\mathcal{N}_{10}u),\partial_{t,\rho}Cu\rangle_{\rho,0}\\
 & =\Re\langle\left(\partial_{t,\rho}^{\ast}\right)^{1/2}\mathcal{N}_{11}^{-1}(g-Cu-\mathcal{N}_{10}u),\partial_{t,\rho}^{1/2}Cu\rangle_{\rho,0}\\
 & =\Re\langle\left(\partial_{t,\rho}^{\ast}\right)^{1/2}\partial_{t,\rho}^{-1/2}\partial_{t,\rho}^{1/2}\mathcal{N}_{11}^{-1}\partial_{t,\rho}^{-1/2}\partial_{t,\rho}^{1/2}(g-Cu-\mathcal{N}_{10}u),\partial_{t,\rho}^{1/2}Cu\rangle_{\rho,0}\\
 & \lesssim\left(\|g\|_{\rho,1/2}+\|u\|_{\rho,1/2}+\|Cu\|_{\rho,1/2}\right)\|Cu\|_{\rho,1/2},
\end{align*}
where $\lesssim$ means an estimate including constants depending
on the operators $\mathcal{N}$ and $\mathcal{M}$ and the positive
definiteness constant $c>0$; we also used $\|\left(\partial_{t,\rho}^{\ast}\right)^{1/2}\partial_{t,\rho}^{-1/2}\|_{L(L_{2,\rho}(\mathbb{R};H))}\leq1$,
which is immediately verified with the help of the Fourier--Laplace
transformation, see \prettyref{prop:fractional_integral}. Thus, summarising
we have shown 
\begin{align*}
\|u\|_{\rho,1}^{2} & \lesssim\left(\|u\|_{\rho,0}+\|f\|_{\rho,0}+\|v\|_{\rho,0}\right)\|u\|_{\rho,1}+\left(\|g\|_{\rho,1/2}+\|u\|_{\rho,1/2}+\|Cu\|_{\rho,1/2}\right)\|Cu\|_{\rho,1/2}.
\end{align*}
Using that $(u,v)=\mathcal{S}_{\rho}(f,g)$ we obtain $\|(u,v)\|_{\rho,0}\lesssim\|(f,g)\|_{\rho,0}\lesssim\|(f,g)\|_{\rho,(0,1/2)},$
where the last norm means that we take the $L_{2,\rho}$ norm of $f$
and the $H_{\rho}^{1/2}$ norm of $g.$ Hence, we can estimate further
\begin{equation}
\|u\|_{\rho,1}^{2}\lesssim\|(f,g)\|_{\rho,(0,1/2)}(\|u\|_{\rho,1}+\|Cu\|_{\rho,1/2})+\|u\|_{\rho,1/2}\|Cu\|_{\rho,1/2}+\|Cu\|_{\rho,1/2}^{2}.\label{eq:u_1}
\end{equation}
Next, we estimate the norm $\|Cu\|_{\rho,1/2}.$ First, we compute
using the positive definiteness estimate for $\mathcal{N}_{11}^{-1}$
in $H_{\rho}^{1/2}(\R;H_{1})$ from \prettyref{lem:N11}
\begin{align*}
\|Cu\|_{1/2}^{2} & \lesssim\Re\langle\mathcal{N}_{11}^{-1}Cu,Cu\rangle_{\rho,1/2}\\
 & =\Re\langle\partial_{t,\rho}^{1/2}\mathcal{N}_{11}^{-1}Cu,\partial_{t,\rho}^{1/2}Cu\rangle_{\rho,0}\\
 & =\Re\langle\partial_{t,\rho}^{1/2}\mathcal{N}_{11}^{-1}g-\partial_{t,\rho}^{1/2}\mathcal{N}_{11}^{-1}\mathcal{N}_{10}u-\partial_{t,\rho}^{1/2}v,\partial_{t,\rho}^{1/2}Cu\rangle_{\rho,0}\\
 & =\Re\langle\partial_{t,\rho}^{1/2}\mathcal{N}_{11}^{-1}\partial_{t,\rho}^{-1/2}\partial_{t,\rho}^{1/2}g-\partial_{t,\rho}^{1/2}\mathcal{N}_{11}^{-1}\mathcal{N}_{10}\partial_{t,\rho}^{-1/2}\partial_{t,\rho}^{1/2}u,\partial_{t,\rho}^{1/2}Cu\rangle_{\rho,0}\\
 & \quad-\Re\langle\partial_{t,\rho}^{1/2}v,\partial_{t,\rho}^{1/2}Cu\rangle_{\rho,0}\\
 & \lesssim(\|g\|_{\rho,1/2}+\|u\|_{\rho,1/2})\|Cu\|_{\rho,1/2}-\Re\langle\partial_{t,\rho}^{1/2}v,\partial_{t,\rho}^{1/2}Cu\rangle_{\rho,0}.
\end{align*}
Moreover, we compute using \prettyref{lem:M00}
\begin{align*}
-\Re\langle\partial_{t,\rho}^{1/2}v,\partial_{t,\rho}^{1/2}Cu\rangle_{\rho,0} & =\Re\langle-\partial_{t,\rho}^{1/2}C^{\ast}v,\partial_{t,\rho}^{1/2}u\rangle_{\rho,0}\\
 & =\Re\langle f-\partial_{t,\rho}\mathcal{M}_{00}u-\mathcal{N}_{00}u-\mathcal{N}_{01}v,|\partial_{t,\rho}|u\rangle_{\rho,0}\\
 & \lesssim\left(\|f\|_{\rho,0}+\|u\|_{\rho,0}+\|v\|_{\rho,0}\right)\|u\|_{\rho,1}-\Re\langle\partial_{t,\rho}\mathcal{M}_{00}u,|\partial_{t,\rho}|u\rangle_{\rho,0}\\
 & \lesssim\left(\|f\|_{\rho,0}+\|u\|_{\rho,0}+\|v\|_{\rho,0}\right)\|u\|_{\rho,1}+\|u\|_{\rho,1/2}^{2}.
\end{align*}
Summarising, we obtain 
\begin{align*}
\|Cu\|_{\rho,1/2}^{2} & \lesssim(\|g\|_{\rho,1/2}+\|u\|_{\rho,1/2})\|Cu\|_{\rho,1/2}+\left(\|f\|_{\rho,0}+\|u\|_{\rho,0}+\|v\|_{\rho,0}\right)\|u\|_{\rho,1}+\|u\|_{\rho,1/2}^{2}\\
 & \lesssim(\|(f,g)\|_{\rho,(0,1/2)}+\|u\|_{\rho,1/2})\|Cu\|_{\rho,1/2}+\|(f,g)\|_{\rho,(0,1/2)}\|u\|_{\rho,1}+\|u\|_{\rho,1/2}^{2},
\end{align*}
which is a quadratic inequality in $\|Cu\|_{\rho,1/2}\geq0$ and yields
\begin{align}
\|Cu\|_{\rho,1/2} & \lesssim\|(f,g)\|_{\rho,(0,1/2)}+\|u\|_{\rho,1/2}+\sqrt{\|(f,g)\|_{\rho,(0,1/2)}\|u\|_{\rho,1}+\|u\|_{\rho,1/2}^{2}}\nonumber \\
 & \lesssim\|(f,g)\|_{\rho,(0,1/2)}+\|u\|_{\rho,1/2}+\sqrt{\|(f,g)\|_{0,1/2}}\sqrt{\|u\|_{\rho,1}}.\label{eq:Cu1/2}
\end{align}
Next, observe that 
\begin{align*}
\|u\|_{\rho,1/2}^{2} & =\langle\partial_{t,\rho}^{1/2}u,\partial_{t,\rho}^{1/2}u\rangle_{\rho,0}=\langle|\partial_{t,\rho}|u,u\rangle_{\rho,0}\leq\|u\|_{\rho,1}\|u\|_{\rho,0}\lesssim\|u\|_{\rho,1}\|(f,g)\|_{\rho,(0,1/2)}.
\end{align*}
Using this inequality, the one derived in \prettyref{eq:u_1}, and
\prettyref{eq:Cu1/2}, we obtain 
\begin{align*}
\|u\|_{\rho,1}^{2} & \lesssim\|(f,g)\|_{\rho,(0,1/2)}\|u\|_{\rho,1}\\
 & \quad+\|(f,g)\|_{\rho,(0,1/2)}\left(\|(f,g)\|_{\rho,(0,1/2)}+\|u\|_{\rho,1/2}+\sqrt{\|(f,g)\|_{\rho,(0,1/2)}}\sqrt{\|u\|_{\rho,1}}\right)+\\
 & \quad+\|u\|_{\rho,1/2}\left(\|(f,g)\|_{\rho,(0,1/2)}+\|u\|_{\rho,1/2}+\sqrt{\|(f,g)\|_{\rho,(0,1/2)}}\sqrt{\|u\|_{\rho,1}}\right)\\
 & \quad+\|(f,g)\|_{\rho,(0,1/2)}^{2}+\|u\|_{\rho,1/2}^{2}+\|(f,g)\|_{\rho,(0,1/2)}\|u\|_{\rho,1}\\
 & \lesssim\|(f,g)\|_{\rho,(0,1/2)}\|u\|_{\rho,1}+\|(f,g)\|_{\rho,(0,1/2)}^{3/2}\|u\|_{\rho,1}^{1/2}+\|(f,g)\|_{\rho,(0,1/2)}^{2}.
\end{align*}
Employing \prettyref{lem:estimate_root} (applied to $x_{0}=\|u\|_{\rho,1}^{1/2}$)
we get that 
\[
\|u\|_{\rho,1}\leq F(\|(f,g)\|_{\rho,(0,1/2)}),
\]
where $F:\R_{\geq0}\to\R_{\geq0}$ is continuous with $F(0)=0.$ This
proves that 
\begin{multline*}
\mathcal{S}_{\rho}:(\partial_{t,\rho}\mathcal{M}+\mathcal{N}+A)[H_{\rho}^{2}(\R;\dom(A))]\subseteq L_{2,\rho}(\R;H_{0})\times H_{\rho}^{1/2}(\R;H_{1})\\
\to H_{\rho}^{1}(\R;H_{0})\times L_{2,\rho}(\R;H_{1})
\end{multline*}
is continuous at $0$ and hence, bounded. Since $(\partial_{t,\rho}\mathcal{M}+\mathcal{N}+A)[H_{\rho}^{2}(\R;\dom(A))]$
is dense in $L_{2,\rho}(\R;H_{0})\times H_{\rho}^{1/2}(\R;H_{1})$
by \prettyref{cor:densityresult}, the first regularity statement
holds. The additional regularity $Cu\in H_{\rho}^{1/2}$ then follows
from estimate \prettyref{eq:Cu1/2}.
\end{proof}

\section{Applications\label{sec:Applications}}

\subsection{Maximal regularity and bounded commutators}

In this section, we will apply our main result \prettyref{thm:main}
to prove maximal regularity for a broad class of evolutionary equations.
Note that the second main theorem of the present manuscript is concerned
with the case, where well-posedness in $H_{\rho}^{1/2}$ is obtained
by a bounded commutator assumption involving $\mathcal{N}$ and by
restricting $\mathcal{M}$ to the case commuting with $\partial_{t,\rho}^{-1}$.
It turns out that this situation is closer to the applications as
we shall outline below.

As above, we assume that $H_{0}$ and $H_{1}$ are two complex Hilbert
spaces and we set $H\coloneqq H_{0}\oplus H_{1}.$ Moreover, $A=\left(\begin{array}{cc}
0 & -C^{*}\\
C & 0
\end{array}\right)$ for some densely defined closed linear operator $C:\dom(C)\subseteq H_{0}\to H_{1}$
and $\mathcal{M}$ and $\mathcal{N}$ have the form $\mathcal{M}=\left(\begin{array}{cc}
\mathcal{M}_{00} & 0\\
0 & 0
\end{array}\right)$ as well as $\mathcal{N}=\left(\begin{array}{cc}
\mathcal{N}_{00} & \mathcal{N}_{01}\\
\mathcal{N}_{10} & \mathcal{N}_{11}
\end{array}\right)$ with appropriate linear operators in $L(L_{2,\rho}(\R;H_{j}),L_{2,\rho}(\R;H_{i}))$,
$i,j\in\{0,1\}$. 
\begin{thm}
\label{thm:max_reg_commuator}Let $A,\mathcal{M}$ and $\mathcal{N}$
be as in \prettyref{eq:A is C star C}, \prettyref{eq:M is M00},
and \prettyref{eq:N is Matrix}. Assume, in addition, that $\partial_{t,\rho}^{-1}\mathcal{M}=\mathcal{M}\partial_{t,\rho}^{-1}$.
Moreover, we assume that there is $c>0$ such that
\[
\Re\langle\mathcal{M}_{00}\phi_{0},\phi_{0}\rangle_{\rho,0}\geq c\langle\phi_{0},\phi_{0}\rangle_{\rho,0}\quad(\phi\in L_{2,\rho}(\mathbb{R};H_{0}))
\]

and
\[
\Re\langle\left(\partial_{t,\rho}\mathcal{M}+\mathcal{N}\right)\phi,\phi\rangle_{\rho,0}\geq c\langle\phi,\phi\rangle_{\rho,0},
\]

for all $\phi\in H_{\rho}^{3/2}(\R;H).$ Finally, we assume that there
is $0\leq\tilde{c}<c$ and $d>0$ such that
\[
\|[\partial_{t,\rho}^{1/2},\mathcal{N}]\phi\|_{\rho,0}\leq\tilde{c}\|\phi\|_{\rho,1/2}+d\|\phi\|_{\rho,0}\quad(\phi\in H_{\rho}^{1/2}(\R;H)).
\]
Then 
\begin{align*}
 & \mathcal{S}_{\rho}[L_{2,\rho}(\R;H_{0})\times H_{\rho}^{1/2}(\R;H_{1})]\\
 & \subseteq\left(H_{\rho}^{1}(\R;H_{0})\cap H_{\rho}^{1/2}(\R;\dom(C))\right)\times\left(L_{2,\rho}(\R;\dom(C^{\ast}))\cap H_{\rho}^{1/2}(\R;H_{1})\right),
\end{align*}
where $\mathcal{S}_{\rho}\coloneqq\left(\overline{\partial_{t,\rho}\mathcal{M}+\mathcal{N}+A}\right)^{-1}\in L(L_{2,\rho}(\R;H))$.
\end{thm}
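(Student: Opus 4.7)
The plan is to reduce the statement to Theorem~\ref{thm:main} by verifying its hypotheses. The assumption $\partial_{t,\rho}^{-1}\mathcal{M}=\mathcal{M}\partial_{t,\rho}^{-1}$ will replace the commutator-with-$\partial_{t,\rho}$ assumption of Theorem~\ref{thm:main} (making $\mathcal{M}'$ trivial), while the commutator bound on $\mathcal{N}$ will be used to upgrade positive definiteness from $L_{2,\rho}(\R;H)$ to $H_{\rho}^{1/2}(\R;H)$. As is customary in this framework, I expect $\rho$ to be chosen sufficiently large so that the $d$-term in the commutator bound can be absorbed.

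First, I would observe that $\mathcal{M}'=0$. Indeed, applying $\partial_{t,\rho}$ on both sides of $\mathcal{M}\partial_{t,\rho}^{-1}=\partial_{t,\rho}^{-1}\mathcal{M}$ on $\dom(\partial_{t,\rho})$ yields $\partial_{t,\rho}\mathcal{M}=\mathcal{M}\partial_{t,\rho}$ there, so $\mathcal{M}'=[\partial_{t,\rho},\mathcal{M}]=0\in L(H_{\rho}^{1/2}(\R;H))$ trivially. For the membership $\mathcal{N}\in L(H_{\rho}^{1/2}(\R;H))$ I would invoke \prettyref{lem:12comm}(a), writing
\[
\partial_{t,\rho}^{1/2}\mathcal{N}\partial_{t,\rho}^{-1/2}=\mathcal{N}+[\partial_{t,\rho}^{1/2},\mathcal{N}]\partial_{t,\rho}^{-1/2}
\]
and using $\|\partial_{t,\rho}^{-1/2}\|\le\rho^{-1/2}$ together with the assumed commutator bound (applied to $\phi\coloneqq\partial_{t,\rho}^{-1/2}\psi$, for which $\|\phi\|_{\rho,1/2}=\|\psi\|_{\rho,0}$ and $\|\phi\|_{\rho,0}\le\rho^{-1/2}\|\psi\|_{\rho,0}$) to conclude boundedness on $L_{2,\rho}(\R;H)$.

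The heart of the argument is the upgraded positive definiteness on $H_{\rho}^{1/2}(\R;H)$. Since $\mathcal{M}$ commutes with $\partial_{t,\rho}^{-1}$, the functional calculus of the normal operator $\partial_{t,\rho}$ (cf.~\prettyref{prop:fractional_integral}) implies that $\mathcal{M}$ commutes with every real power of $\partial_{t,\rho}$, in particular with $\partial_{t,\rho}^{1/2}$ on $\dom(\partial_{t,\rho}^{1/2})$. Thus, for $\phi\in H_{\rho}^{3/2}(\R;H)$ and $\psi\coloneqq\partial_{t,\rho}^{1/2}\phi\in H_{\rho}^{1}(\R;H)$, one has $\partial_{t,\rho}^{1/2}\partial_{t,\rho}\mathcal{M}\phi=\partial_{t,\rho}\mathcal{M}\psi$, so
\begin{align*}
\Re\langle(\partial_{t,\rho}\mathcal{M}+\mathcal{N})\phi,\phi\rangle_{\rho,1/2}
 &= \Re\langle(\partial_{t,\rho}\mathcal{M}+\mathcal{N})\psi,\psi\rangle_{\rho,0}+\Re\langle[\partial_{t,\rho}^{1/2},\mathcal{N}]\phi,\psi\rangle_{\rho,0}\\
 &\ge c\|\phi\|_{\rho,1/2}^{2}-\bigl(\tilde c\|\phi\|_{\rho,1/2}+d\|\phi\|_{\rho,0}\bigr)\|\phi\|_{\rho,1/2}\\
 &\ge \bigl(c-\tilde c-d\rho^{-1/2}\bigr)\|\phi\|_{\rho,1/2}^{2},
\end{align*}
where the last step uses $\|\phi\|_{\rho,0}\le\rho^{-1/2}\|\phi\|_{\rho,1/2}$. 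For $\rho>\bigl(d/(c-\tilde c)\bigr)^{2}$, this delivers the strict positive definiteness \prettyref{eq:pd12} required by \prettyref{thm:main}.

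With all hypotheses of \prettyref{thm:main} now in place---\prettyref{eq:M00pd} and \prettyref{eq:pd0} are directly assumed, \prettyref{eq:pd12} was just derived, and the boundedness of $\mathcal{M}'$ and $\mathcal{N}$ on $H_{\rho}^{1/2}(\R;H)$ was established above---a direct appeal to \prettyref{thm:main} then concludes the proof. The main obstacle is the step that commutation with $\partial_{t,\rho}^{-1}$ extends via the functional calculus to commutation with $\partial_{t,\rho}^{1/2}$, together with carefully tracking the $\rho$-dependence so that the $d$-term is absorbed into the positive-definiteness constant; both are, however, routine once the framework from \prettyref{sec:The-Framework} is invoked.
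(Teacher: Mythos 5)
Your overall strategy --- observe $\mathcal{M}'=0$, establish $\mathcal{N}\in L(H_\rho^{1/2}(\R;H))$ via \prettyref{lem:12comm}(a), promote commutation with $\partial_{t,\rho}^{-1}$ to commutation with $\partial_{t,\rho}^{1/2}$, and reduce to \prettyref{thm:main} by verifying \prettyref{eq:pd12} --- matches the paper's proof up to the final absorption step. The gap is precisely there. Your estimate yields
\[
\Re\langle(\partial_{t,\rho}\mathcal{M}+\mathcal{N})\phi,\phi\rangle_{\rho,1/2}\geq\bigl(c-\tilde c-d\rho^{-1/2}\bigr)\|\phi\|_{\rho,1/2}^{2},
\]
and you then ``choose $\rho$ large enough.'' But $\rho$ is \emph{fixed} in the theorem statement: all the hypotheses are made at this one $\rho$, and the conclusion concerns the solution operator $\mathcal{S}_\rho$ at this one $\rho$. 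There is no licence to increase $\rho$: the positive definiteness constant $c$, the commutator constants $\tilde c, d$, and the operators $\mathcal{M},\mathcal{N}$ themselves are only given at the specified $\rho$, and even in the evolutionary setting the constant $d$ in the commutator bound typically \emph{grows} like $\sqrt{\rho}$ (cf.~\prettyref{prop:bdd_commut} and \prettyref{prop:commutator_0}), so $d\rho^{-1/2}$ need not become small.

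The paper resolves this with a shift trick that works at the given $\rho$: fix $0<\varepsilon<c-\tilde c$ and $\delta\geq\frac{d^{2}}{4\varepsilon\sqrt{\rho}}$, set $\tilde{\mathcal{N}}\coloneqq\mathcal{N}+\delta\partial_{t,\rho}^{-1/2}$, and note that $(u,v)=\mathcal{S}_\rho(f,g)$ also solves
\[
\overline{\left(\partial_{t,\rho}\mathcal{M}+\tilde{\mathcal{N}}+A\right)}\left(\begin{array}{c}u\\v\end{array}\right)=\left(\begin{array}{c}f\\g\end{array}\right)+\delta\partial_{t,\rho}^{-1/2}\left(\begin{array}{c}u\\v\end{array}\right)\in L_{2,\rho}(\R;H_{0})\times H_{\rho}^{1/2}(\R;H_{1}).
\]
The extra term $\delta\Re\langle\phi,\partial_{t,\rho}^{1/2}\phi\rangle_{\rho,0}\geq\sqrt{\rho}\,\delta\|\phi\|_{\rho,0}^{2}$ is precisely what absorbs the cross term $d\|\phi\|_{\rho,0}\|\phi\|_{\rho,1/2}$ via Young's inequality, so that $\tilde{\mathcal{N}}$ satisfies \prettyref{eq:pd12}; since $\tilde{\mathcal{N}}$ still has the block form \prettyref{eq:N is Matrix} and lies in $L(H_\rho^{1/2}(\R;H))$, \prettyref{thm:main} applies. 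You need some device of this kind to close the argument without altering $\rho$; as written, your proof only establishes the statement for $\rho>\bigl(d/(c-\tilde c)\bigr)^{2}$, which is a strictly weaker claim.
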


\begin{rem}
\label{rem:oncecommualways}The condition $\partial_{t,\rho}^{-1}\mathcal{M}=\mathcal{M}\partial_{t,\rho}^{-1}$
implies $\partial_{t,\rho}^{-1/2}\mathcal{M}=\mathcal{M}\partial_{t,\rho}^{-1/2}$.
Indeed, to start off with, the fact that $\mathcal{M}$ commutes with
$\partial_{t,\rho}^{-1}$ yields 
\begin{align*}
\mathcal{M}\partial_{t,\rho} & \subseteq\partial_{t,\rho}\mathcal{M}.
\end{align*}
Hence, $\mathcal{M}\left(\partial_{t,\rho}-2\rho\right)\subseteq\left(\partial_{t,\rho}-2\rho\right)\mathcal{M}$
and, thus, using \prettyref{prop:elementary facts td} (c), we infer
\[
-\left(\partial_{t,\rho}^{*}\right)^{-1}\mathcal{M}=\left(\partial_{t,\rho}-2\rho\right)^{-1}\mathcal{M}\subseteq\mathcal{M}\left(\partial_{t,\rho}-2\rho\right)^{-1}=-\mathcal{M}\left(\partial_{t,\rho}^{*}\right)^{-1}.
\]
Since $\left(\partial_{t,\rho}^{*}\right)^{-1}\mathcal{M}$ is defined
everywhere, we deduce $\left(\partial_{t,\rho}^{*}\right)^{-1}\mathcal{M}=\mathcal{M}\left(\partial_{t,\rho}^{*}\right)^{-1}$.
Next, by the approximation theorem of Weierstraß, we have that the
polynomials in $z$ and $z^{*}$ as continuous functions on $C(V)$
are dense in $C(V)$ endowed with the $\sup$-norm, where $V\coloneqq\overline{B_{\mathbb{C}}(1/(2\rho),1/(2\rho))}$.
Hence, we find a sequence of polynomials $\left(z\mapsto p_{n}(z,z^{*})\right)_{n}$
in $z$ and $z^{*}$ such that $p_{n}\to(z\mapsto\sqrt{z})$ uniformly
on $V$ as $n\to\infty$. In consequence, using the Fourier--Laplace
transformation, we obtain that $p_{n}\left(\partial_{t,\rho}^{-1},\left(\partial_{t,\rho}^{*}\right)^{-1}\right)\to\partial_{t,\rho}^{-1/2}$
in $L(L_{2,\rho}(\mathbb{R};H)).$ Thus, we infer using the commutator
properties of $\mathcal{M}$ shown above 
\[
\partial_{t,\rho}^{-1/2}\mathcal{M}=\lim_{n\to\infty}p_{n}\left(\partial_{t,\rho}^{-1},\left(\partial_{t,\rho}^{*}\right)^{-1}\right)\mathcal{M}=\lim_{n\to\infty}\mathcal{M}p_{n}\left(\partial_{t,\rho}^{-1},\left(\partial_{t,\rho}^{*}\right)^{-1}\right)=\mathcal{M}\partial_{t,\rho}^{-1/2}.
\]
\end{rem}

\begin{proof}[Proof of \prettyref{thm:max_reg_commuator}]
 Let $f\in L_{2,\rho}(\R;H_{0})$ and $g\in H_{\rho}^{1/2}(\R;H_{1}).$
Moreover, let $(u,v)=\mathcal{S}_{\rho}(f,g)\in L_{2,\rho}(\R;H).$
We choose $0<\varepsilon<c-\tilde{c}$ and $\delta\geq\frac{d^{2}}{4\varepsilon\sqrt{\rho}}$
and consider the operator 
\[
\tilde{\mathcal{N}}\coloneqq\mathcal{N}+\delta\partial_{t,\rho}^{-1/2}.
\]
It is clear that 
\[
\overline{\left(\partial_{t,\rho}\mathcal{M}+\tilde{\mathcal{N}}+A\right)}\left(\begin{array}{c}
u\\
v
\end{array}\right)=\left(\begin{array}{c}
f\\
g
\end{array}\right)+\delta\partial_{t,\rho}^{-1/2}\left(\begin{array}{c}
u\\
v
\end{array}\right)\in L_{2,\rho}(\R;H_{0})\times H_{\rho}^{1/2}(\R;H)
\]
and hence, to show the claim, it suffices to prove that the operators
$\mathcal{M}$ and $\mathcal{\tilde{N}}$ satisfy the assumptions
of \prettyref{thm:main}. We first note that $\mathcal{M}'=0$ and
that \prettyref{eq:M00pd} holds by assumption and \prettyref{eq:pd0}
follows from the inequality assumed for $\partial_{t,\rho}\mathcal{M}+\mathcal{N}$
and the fact that $\Re\partial_{t,\rho}^{-1/2}\geq0.$ Thus, it remains
to show \prettyref{eq:pd12}. For doing so, let $\phi\in H_{\rho}^{3/2}(\R;H).$
Since $\mathcal{M}$ commutes with $\partial_{t,\rho}^{-1}$, it follows
that it also commutes with $\partial_{t,\rho}^{-1/2}$ (see \prettyref{rem:oncecommualways})
and thus
\[
\partial_{t,\rho}^{3/2}\mathcal{M}\phi=\partial_{t,\rho}^{3/2}\mathcal{M}\partial_{t,\rho}^{-1/2}\partial_{t,\rho}^{1/2}\phi=\partial_{t,\rho}\mathcal{M}\partial_{t,\rho}^{1/2}\phi.
\]

Hence, we can compute 
\begin{align*}
 & \Re\langle\left(\partial_{t,\rho}\mathcal{M}+\tilde{\mathcal{N}}\right)\phi,\phi\rangle_{\rho,1/2}\\
 & =\Re\langle\partial_{t,\rho}^{3/2}\mathcal{M}\phi+\partial_{t,\rho}^{1/2}\tilde{\mathcal{N}}\phi,\partial_{t,\rho}^{1/2}\phi\rangle_{\rho,0}\\
 & =\Re\langle\partial_{t,\rho}\mathcal{M}\partial_{t,\rho}^{1/2}\phi+\partial_{t,\rho}^{1/2}\mathcal{N}\phi,\partial_{t,\rho}^{1/2}\phi\rangle_{\rho,0}+\delta\Re\langle\phi,\partial_{t,\rho}^{1/2}\phi\rangle_{\rho,0}\\
 & \geq\Re\langle(\partial_{t,\rho}\mathcal{M}+\mathcal{N})\partial_{t,\rho}^{1/2}\phi,\partial_{t,\rho}^{1/2}\phi\rangle_{\rho,0}+\Re\langle[\partial_{t,\rho}^{1/2},\mathcal{N}]\phi,\partial_{t,\rho}^{1/2}\phi\rangle_{\rho,0}+\sqrt{\rho}\delta\|\phi\|_{\rho,0}^{2}\\
 & \geq c\|\phi\|_{\rho,1/2}^{2}-\|[\partial_{t,\rho},\mathcal{N}]\phi\|_{\rho,0}\|\phi\|_{\rho,1/2}+\sqrt{\rho}\delta\|\phi\|_{\rho,0}^{2}\\
 & \geq\left(c-\tilde{c}\right)\|\phi\|_{\rho,1/2}^{2}-d\|\phi\|_{\rho,0}\|\phi\|_{\rho,1/2}+\sqrt{\rho}\delta\|\phi\|_{\rho,0}^{2}\\
 & \geq(c-\tilde{c}-\varepsilon)\|\phi\|_{\rho,1/2}^{2}+(\sqrt{\rho}\delta-\frac{d^{2}}{4\varepsilon})\|\phi\|_{\rho,0}^{2}\\
 & \geq(c-\tilde{c}-\varepsilon)\|\phi\|_{\rho,1/2}^{2},
\end{align*}
which shows \prettyref{eq:pd12}.
\end{proof}
If the coefficient operators, $\mathcal{M}$ and $\mathcal{N}$, act
in a `physically meaningful manner'; that is, if they are causal (see
definition below), then the latter result (as well as the other main
result \prettyref{thm:main}) also imply a maximal regularity result
locally in time. For this we need a closer look into the well-posedness
result \prettyref{thm:sol_theory_basic}, which in turn prerequisites
the following notion. We define
\[
S_{c}(\mathbb{R};H)\coloneqq\lin\{f\colon\mathbb{R}\to H;f\text{ simple function with compact support}\}.
\]

\begin{defn}
Let $K_{0},K_{1}$ be Hilbert spaces, $\rho_{0}\in\mathbb{R}$, and
\[
\mathcal{C}\colon S_{c}(\mathbb{R};K_{0})\to\bigcap_{\rho\geq\rho_{0}}L_{2,\rho}(\mathbb{R};K_{1})
\]
linear. Then we call $\mathcal{C}$ \emph{evolutionary (at $\rho_{0}$),}
if, for all $\rho\geq\rho_{0}$, $\mathcal{C}$ admits a continuous
extension $\mathcal{C}_{\rho}\in L(L_{2,\rho}(\mathbb{R};K_{0}),L_{2,\rho}(\mathbb{R};K_{1}))$
satisfying
\[
\sup_{\rho\geq\rho_{0}}\|\mathcal{C}_{\rho}\|_{L(L_{2,\rho}(\mathbb{R};K_{0}),L_{2,\rho}(\mathbb{R};K_{1}))}<\infty.
\]
 We gather two results important for evolutionary mappings of the
type discussed in the latter definition. For the intricacies of the
interplay of causality and closure of operators, we refer to \cite{W15_CB}.
\end{defn}

\begin{prop}[{\cite[Remark 2.1.5]{W16_H} or \cite[Lemma 4.2.5 (a)]{seifert2020evolutionary}}]
Let $K_{0},K_{1}$ be Hilbert spaces, $\rho_{0}\in\mathbb{R}$, and
$\mathcal{C}\colon S_{c}(\mathbb{R};K_{0})\to\bigcap_{\rho\geq\rho_{0}}L_{2,\rho}(\mathbb{R};K_{1})$
linear and $\mathcal{C}$ evolutionary at $\rho_{0}$. Then $\mathcal{C}_{\rho}$
is \emph{causal} for all $\rho\geq\rho_{0}$; that is, for all $t\in\mathbb{R}$
and $f\in L_{2,\rho}(\mathbb{R};K_{0})$ we have
\[
\spt f\subseteq[t,\infty)\Rightarrow\spt\mathcal{C}_{\rho}f\subseteq[t,\infty).
\]
 
\end{prop}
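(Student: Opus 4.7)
The plan is to exploit the uniform-in-$\mu$ operator bound on $\mathcal{C}_{\mu}$ by comparing the $L_{2,\mu}$-norms of $f$ and $g:=\mathcal{C}f$ in the large-$\mu$ limit. The crucial observation is that for $f\in S_{c}(\mathbb{R};K_{0})$ the image $\mathcal{C}f$ is a \emph{single} function belonging to every $L_{2,\mu}(\mathbb{R};K_{1})$ simultaneously, and the continuous extension $\mathcal{C}_{\mu}$ must agree with $\mathcal{C}$ on $S_{c}$, so $\mathcal{C}_{\mu}f=\mathcal{C}f=g$ for every admissible $\mu$. The exponential weights $\mathrm{e}^{-2\mu r}$ then act as a ``time microscope'': moving $\mu\to\infty$ magnifies contributions from small $r$ relative to large $r$, so any mass of $g$ to the left of $t$ would blow up faster than the (compactly supported) mass of $f$ on $[t,\infty)$, contradicting the uniform operator-norm bound.

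First I would reduce to $f\in S_{c}(\mathbb{R};K_{0})$: simple compactly supported functions with support in $[t,\infty)$ are dense in $\{h\in L_{2,\rho}(\mathbb{R};K_{0}):\spt h\subseteq[t,\infty)\}$, so approximating $f_{n}\to f$ with $\spt f_{n}\subseteq[t,\infty)$ gives $\mathcal{C}_{\rho}f_{n}\to\mathcal{C}_{\rho}f$ in $L_{2,\rho}$ by continuity, and $\{g\in L_{2,\rho}(\mathbb{R};K_{1}):\spt g\subseteq[t,\infty)\}$ is closed. Thus it suffices to prove the causality statement for $f\in S_{c}(\mathbb{R};K_{0})$ with $\spt f\subseteq[t,T]$ for some finite $T>t$.

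Next I would run the contradiction. Set $g=\mathcal{C}f=\mathcal{C}_{\mu}f$ and $M\coloneqq\sup_{\mu\geq\rho_{0}}\|\mathcal{C}_{\mu}\|$. For $\mu>\max\{0,\rho_{0}\}$ we have
\[
\|g\|_{L_{2,\mu}}^{2}\leq M^{2}\|f\|_{L_{2,\mu}}^{2}\leq M^{2}\,\mathrm{e}^{-2\mu t}\int_{t}^{T}\|f(r)\|^{2}\,\mathrm{d}r,
\]
since $r\mapsto\mathrm{e}^{-2\mu r}$ is decreasing and $\spt f\subseteq[t,T]$. Suppose $\spt g\not\subseteq[t,\infty)$; then by definition of support there exists a bounded interval $[a,b]$ with $b<t$ such that $K\coloneqq\int_{a}^{b}\|g(r)\|^{2}\,\mathrm{d}r>0$. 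The same monotonicity gives
\[
\|g\|_{L_{2,\mu}}^{2}\geq\int_{a}^{b}\|g(r)\|^{2}\mathrm{e}^{-2\mu r}\,\mathrm{d}r\geq K\,\mathrm{e}^{-2\mu b}.
\]
Combining,
\[
K\leq M^{2}\Bigl(\int_{t}^{T}\|f(r)\|^{2}\,\mathrm{d}r\Bigr)\mathrm{e}^{-2\mu(t-b)},
\]
whose right-hand side tends to $0$ as $\mu\to\infty$ because $t-b>0$. This contradicts $K>0$, proving $\spt g\subseteq[t,\infty)$.

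There is no real obstacle here; the only thing that requires care is the observation that the extensions $\mathcal{C}_{\mu}$ coincide on $S_{c}$ (so the function $g$ does not depend on the weight chosen) and the correct reading of $\spt g\not\subseteq[t,\infty)$ yielding an interval $[a,b]$ with $b<t$ and $\int_{a}^{b}\|g\|^{2}>0$. Everything else is a direct exploitation of the uniform norm bound provided by the evolutionary assumption.
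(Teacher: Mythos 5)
Your proof is correct, and it is essentially the standard argument used in the cited references: the paper itself only cites \cite[Remark 2.1.5]{W16_H} and \cite[Lemma 4.2.5(a)]{seifert2020evolutionary} without reproducing the proof, and those sources use precisely this "exponential weight as time microscope" argument — letting $\mu\to\infty$ and exploiting the uniform operator-norm bound to show any mass of $\mathcal{C}f$ left of $t$ would contradict the decay rate $\mathrm{e}^{-2\mu t}$ forced by $\spt f\subseteq[t,\infty)$. The reduction to simple compactly supported $f$ and the observation that the extensions $\mathcal{C}_{\mu}$ all agree on $S_{c}$ are handled correctly.
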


\begin{thm}[{\cite[Theorem 3.4.6]{W16_H} or \cite[Theorem 3.4]{Waurick2015}}]
In addition to the assumptions in \prettyref{thm:sol_theory_basic},
assume that $\mathcal{M},\mathcal{M}'$ and $\mathcal{N}$ are evolutionary.
Then 
\[
\mathcal{S}_{\rho}=\left(\overline{\partial_{t,\rho}\mathcal{M}_{\rho}+\mathcal{N}_{\rho}+A}\right)^{-1}\in L(L_{2,\rho}(\mathbb{R};H))
\]
 is causal.
\end{thm}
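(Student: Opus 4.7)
The plan is to derive causality of $\mathcal{S}_{\rho}$ from causality of all its building blocks, combined with a weight-independence argument that allows sending the weight parameter to infinity and extracting causality from a quantitative exponential estimate.

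First, by the Proposition immediately preceding the statement, each of $\mathcal{M}_{\mu}$, $\mathcal{M}'_{\mu}$, $\mathcal{N}_{\mu}$ is causal for every $\mu\geq\rho_{0}$. The Volterra representation \( (\partial_{t,\mu}^{-1}f)(t)=\int_{-\infty}^{t}f(s)\,\d s \) in \prettyref{prop:elementary facts td} shows $\partial_{t,\mu}^{-1}$ is causal, and $A$ is causal since it acts pointwise in time.

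The key intermediate step is a weight-independence statement: for every $f\in S_{c}(\R;H)$ and all $\mu,\nu\geq\rho_{0}$, the functions $\mathcal{S}_{\mu}f$ and $\mathcal{S}_{\nu}f$ coincide a.e. To establish this, regularize $f$ by $f_{\varepsilon}\coloneqq(1+\varepsilon\partial_{t,\rho_{0}})^{-1}f$; by the explicit Volterra structure, this resolvent acts on $S_{c}(\R;H)$ by a fixed causal convolution that agrees with $(1+\varepsilon\partial_{t,\mu})^{-1}$ for every $\mu\geq\rho_{0}$. The evolutionary property of $\mathcal{M},\mathcal{M}',\mathcal{N}$ ensures that the solution construction underlying \prettyref{thm:sol_theory_basic} proceeds identically for $\mu$ and $\nu$, producing a single regularized solution in $L_{2,\mu}(\R;H)\cap L_{2,\nu}(\R;H)$; uniqueness in each weighted space forces equality, and \prettyref{lem:approx} permits passing to the limit $\varepsilon\to 0$ in both weighted topologies simultaneously.

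Granted weight-independence, fix $f\in S_{c}(\R;H)$ with $\spt f\subseteq[t,\infty)$ and denote by $u$ the common value of $\mathcal{S}_{\mu}f$ for $\mu\geq\rho_{0}$. The uniform positive-definiteness bound underlying \prettyref{thm:sol_theory_basic} yields
\[
\|u\|_{L_{2,\mu}(\R;H)}\leq\frac{1}{c}\|f\|_{L_{2,\mu}(\R;H)}\leq\frac{\e^{-\mu t}}{c}\|f\|_{L_{2}(\R;H)},
\]
the last step using $\spt f\subseteq[t,T]$ together with $\e^{-2\mu s}\leq\e^{-2\mu t}$ on $[t,T]$. For any real $a<s<t$ one estimates, using $\e^{2\mu r}\leq\e^{2\mu s}$ on $[a,s]$,
\[
\int_{a}^{s}\|u(r)\|^{2}\,\d r\leq\e^{2\mu s}\int_{a}^{s}\|u(r)\|^{2}\e^{-2\mu r}\,\d r\leq\e^{2\mu s}\|u\|_{L_{2,\mu}(\R;H)}^{2}\leq\frac{\|f\|_{L_{2}(\R;H)}^{2}}{c^{2}}\e^{-2\mu(t-s)},
\]
which vanishes as $\mu\to\infty$ since $t-s>0$. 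Hence $u=0$ a.e.\ on $(-\infty,t)$, so $\spt\mathcal{S}_{\rho}f\subseteq[t,\infty)$. A density-and-continuity argument, exploiting boundedness of $\mathcal{S}_{\rho}$ together with density of compactly supported simple functions with support in $[t,\infty)$ inside the closed subspace $\{f\in L_{2,\rho}(\R;H):\spt f\subseteq[t,\infty)\}$, extends causality to all admissible right-hand sides.

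The principal obstacle is the weight-independence assertion: since $L_{2,\mu}(\R;H)$ and $L_{2,\nu}(\R;H)$ are in general incomparable, one must carefully ensure that the regularized solutions live in both spaces simultaneously and that the Yosida limit produces the same function with respect to each topology. Once that step is settled, the exponential-weight estimate above delivers causality of $\mathcal{S}_{\rho}$ essentially in a single line.
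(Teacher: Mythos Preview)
The paper does not supply its own proof of this statement; it merely cites it from \cite[Theorem~3.4.6]{W16_H} and \cite[Theorem~3.4]{Waurick2015}. So there is no in-paper argument to compare against, and your outline is in fact the standard route taken in those references: exploit evolutionarity to obtain a $\mu$-independent resolvent bound $\|\mathcal{S}_{\mu}\|\leq 1/c$, establish that $\mathcal{S}_{\mu}f$ is independent of $\mu$ on compactly supported data, and then let $\mu\to\infty$ in the exponential estimate to force vanishing before the support of $f$.

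That said, your treatment of the weight-independence step is not yet a proof. Saying that ``the solution construction underlying \prettyref{thm:sol_theory_basic} proceeds identically for $\mu$ and $\nu$'' glosses over the real content: the closures $\overline{\partial_{t,\mu}\mathcal{M}_{\mu}+\mathcal{N}_{\mu}+A}$ are taken in different ambient spaces, and it is not automatic that an element of the graph of one closure lies in the other. The regularisation $f_{\varepsilon}=(1+\varepsilon\partial_{t,\mu})^{-1}f$ does agree across weights for $f\in S_{c}(\R;H)$, but you still need to argue that the corresponding \emph{solutions} $u_{\mu,\varepsilon}\coloneqq\mathcal{S}_{\mu}f_{\varepsilon}$ belong to $L_{2,\nu}(\R;H)$ and satisfy the equation there. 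The usual way to close this gap is to regularise on the solution side as well: set $u_{\mu,\varepsilon,\delta}\coloneqq(1+\delta\partial_{t,\mu})^{-1}u_{\mu,\varepsilon}$, observe that this lies in $H^{1}_{\mu}(\R;\dom(A))$, use the commutator identity $\mathcal{M}\partial_{t,\mu}\subseteq\partial_{t,\mu}\mathcal{M}-\mathcal{M}'$ together with causality of $\mathcal{M}_{\mu},\mathcal{M}'_{\mu},\mathcal{N}_{\mu}$ to see that the image under $\partial_{t,\mu}\mathcal{M}+\mathcal{N}+A$ is supported in $[\inf\spt f,\infty)$, and then invoke the uniform $1/c$ bound in $L_{2,\nu}$ to pass to the limit $\delta\to0$ there. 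You correctly flag this as the principal obstacle, but as written the paragraph asserts rather than proves it; one more layer of approximation (and an appeal to causality of the coefficient operators, which you already recorded) is needed to make the argument watertight.
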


Having presented the remaining technical ingredients for the localisation
on bounded time-intervals, we can present the local maximal regularity
statement next.
\begin{cor}
\label{cor:local-in-time}In addition to the assumptions in \prettyref{thm:max_reg_commuator},
assume that $\mathcal{M}$ and $\mathcal{N}$ are evolutionary. Let
$T\in]0,\infty[$. Then there exists $\kappa\geq0$ such that for
all $f\in L_{2,\rho}(\mathbb{R};H_{0})$ and $g\in H_{\rho}^{1/2}(\mathbb{R};H_{1})$
with $\spt f,\spt g\subseteq[0,T[$, $(u,v)=\mathcal{S}_{\rho}(f,g)$
are supported in $[0,\infty)$ only and satisfy
\[
\|u\|_{H^{1}[0,T)}+\|Cu\|_{H^{1/2}[0,T)}+\|C^{*}v\|_{L_{2}[0,T)}+\|v\|_{H^{1/2}[0,T)}\leq\kappa\left(\|f\|_{L_{2}[0,T)}+\|g\|_{H^{1/2}[0,T)}\right).
\]
\end{cor}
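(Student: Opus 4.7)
The plan is to combine causality with the global-in-time estimate from \prettyref{thm:max_reg_commuator}, and then to exploit the fact that on the bounded time interval $[0,T)$ the exponentially weighted and the standard Sobolev norms are comparable with constants depending only on $\rho$ and $T$.

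First, the assumption $\partial_{t,\rho}^{-1}\mathcal{M}=\mathcal{M}\partial_{t,\rho}^{-1}$ forces $\mathcal{M}'=[\partial_{t,\rho},\mathcal{M}]=0$, so that $\mathcal{M},\mathcal{M}',\mathcal{N}$ are all evolutionary. By the causality theorem cited just before the corollary, $\mathcal{S}_{\rho}$ is causal; since $\spt f,\spt g\subseteq[0,T)\subseteq[0,\infty)$, the solution $(u,v)=\mathcal{S}_{\rho}(f,g)$ must have support in $[0,\infty)$, which settles the first assertion.

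For the quantitative bound, invoke \prettyref{thm:max_reg_commuator} together with the continuity statement in \prettyref{rem:(a)contestim(b)2ndorder}(a) to produce a constant $\kappa_{1}\geq0$ independent of $(f,g)$ with
\[
\|u\|_{\rho,1}+\|Cu\|_{\rho,1/2}+\|C^{*}v\|_{\rho,0}+\|v\|_{\rho,1/2}\leq\kappa_{1}\bigl(\|f\|_{\rho,0}+\|g\|_{\rho,1/2}\bigr).
\]
It remains to compare both sides with their unweighted analogues on $[0,T)$. For any $w\colon\R\to H$ supported in $[0,\infty)$ the elementary estimate $\|w\|_{L_{2}[0,T)}^{2}\leq e^{2\rho T}\|w\|_{\rho,0}^{2}$, combined with $\partial_{t,\rho}u=u'$ in the distributional sense and $\|u\|_{\rho,0}\leq\rho^{-1}\|u\|_{\rho,1}$, yields $\|u\|_{H^{1}[0,T)}+\|C^{*}v\|_{L_{2}[0,T)}\lesssim\|u\|_{\rho,1}+\|C^{*}v\|_{\rho,0}$. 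For the order-$1/2$ terms, note that $M_{\rho}w\coloneqq e^{-\rho\cdot}w$ is an isometry $L_{2,\rho}(\R;H)\to L_{2}(\R;H)$ and, by the identity $(M_{\rho}w)'=M_{\rho}(\partial_{t,\rho}-\rho)w$, an isomorphism $H_{\rho}^{1}(\R;H)\to H^{1}(\R;H)$ with $\rho$-dependent constants; complex interpolation as in \prettyref{rem:interpolation} then gives the same at order $1/2$. Pre- or post-composing with multiplication by a smooth compactly supported function equal to $e^{\pm\rho t}$ on $[0,T]$ (a bounded multiplier on $H^{1/2}(\R;H)$) delivers $\|w\|_{H^{1/2}[0,T)}\lesssim\|w\|_{\rho,1/2}$ for $w$ supported in $[0,\infty)$, which applied to $Cu$ and $v$ handles the remaining left-hand-side terms. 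The right-hand side is controlled analogously: $\|f\|_{\rho,0}\leq\|f\|_{L_{2}[0,T)}$ is immediate since $e^{-2\rho t}\leq1$ on $[0,T)$, and the same multiplier argument applied to $g$ supported in $[0,T)$ gives $\|g\|_{\rho,1/2}\lesssim\|g\|_{H^{1/2}[0,T)}$. Chaining all inequalities produces the asserted estimate with a suitable $\kappa$.

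The main technical obstacle is the $H^{1/2}$ comparison, since $s=1/2$ is the critical exponent at which extension by zero can fail to be bounded on $H^{s}$. The reason this is not an issue here is that $g$ is given a priori as an element of the global space $H_{\rho}^{1/2}(\R;H_{1})$ with compact support in $[0,T)$, so after composing with $M_{\rho}$ and a smooth cut-off equal to one on $[0,T]$ one lands in $H^{1/2}(\R;H_{1})$ unconditionally; all norm comparisons thus reduce to bounded multiplication by smooth compactly supported functions, which is classical.
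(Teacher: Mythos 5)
You take the same overall route as the paper — causality for the support claim, the global $H_{\rho}^{1/2}$ continuity estimate from \prettyref{rem:(a)contestim(b)2ndorder}(a), and then norm comparisons between the exponentially weighted global spaces and the standard local ones. For the left-hand side (restriction to $[0,T)$), your argument via the isometry $M_{\rho}\colon w\mapsto e^{-\rho\cdot}w$, interpolation to order $1/2$, and multiplication by a smooth compactly supported cutoff is a slightly more roundabout but valid alternative to the paper's one-line appeal to complex interpolation.

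The genuine gap is in the direction you flag as the "main technical obstacle" and then dismiss: the bound $\|g\|_{\rho,1/2}\lesssim\|g\|_{H^{1/2}[0,T)}$ for $g$ supported in $[0,T)$. Multiplication by a smooth cutoff equal to $e^{-\rho t}$ on $[0,T]$ gives $\|g\|_{\rho,1/2}\lesssim\|g\|_{H^{1/2}(\mathbb{R};H_{1})}$, where the latter is the norm of the \emph{zero extension} of $g|_{[0,T)}$. That is not the same quantity as $\|g\|_{H^{1/2}[0,T)}$ (the quotient/interpolation norm on the interval), and the two are not uniformly comparable at the critical exponent $1/2$: the former is the $H^{1/2}_{00}$-type norm and dominates the latter, not vice versa. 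Your closing paragraph frames the obstruction as a question of \emph{membership} ("lands in $H^{1/2}(\mathbb{R};H_{1})$ unconditionally"), but membership is not the issue — one needs a uniform constant in the norm inequality, and smooth multiplication cannot produce it. The paper addresses exactly this step by constructing an explicit \emph{extension operator} $E$ (even reflection across $t=0$ and $t=T$ followed by a smooth cutoff), checking boundedness $L_{2}[0,T)\to L_{2,\rho}$ and $H^{1}[0,T)\to H_{\rho}^{1}$, and interpolating to get $E\colon H^{1/2}[0,T)\to H_{\rho}^{1/2}(\mathbb{R};H)$ bounded; your proposal has no analogue of $E$. To repair your argument you would need to introduce such a reflection-based extension operator (or an equivalent device) and combine it with causality to replace $g$ by $E(g|_{[0,T)})$ when bounding the right-hand side, rather than relying on multipliers alone.
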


\begin{proof}
Firstly, observe that $H_{\rho}^{1/2}(\mathbb{R};H)\ni u\mapsto u|_{[0,T)}\in H^{1/2}([0,T);H)$
continuously, by complex interpolation, see also \prettyref{rem:interpolation}.
Next, let $\phi\in C_{c}^{\infty}(\mathbb{R})$ with $0\leq\phi\leq1$
have the following properties
\[
\phi(t)=\begin{cases}
0, & t\leq-T/2,\\
1 & 0\leq t\leq T,\\
0 & t\geq3T/2.
\end{cases}
\]
For $h\in L_{2}(]0,T[;H)$ define
\[
\hat{h}\coloneqq\phi(\cdot)\begin{cases}
h(-\cdot), & \text{ on }]-\infty,0],\\
h(\cdot), & \text{ on }]0,T[,\\
h(T-\cdot) & \text{ on }[T,\infty[.
\end{cases}
\]
Then it is not difficult to see that $E\colon h\mapsto\hat{h}$ is
continuous as a mapping from $L_{2}(]0,T[;H)$ to $L_{2}(\mathbb{R};H)$
and as a mapping from $H^{1}(]0,T[;H)$ to $H_{\rho}^{1}(\mathbb{R};H)$.
Thus, by interpolation, we infer continuity as a mapping from $H^{1/2}(]0,T[;H)$
to $H_{\rho}^{1/2}(\mathbb{R};H).$ Thus, there exists $\kappa\geq0$
such that for all $g\in H_{\rho}^{1/2}(\mathbb{R};H)$ with $\spt g\subseteq[0,T[$
we have $\|g\|_{\rho,\frac{1}{2}}\leq\kappa\|g\|_{H^{1/2}[0,T[}.$
This estimate together with \prettyref{thm:max_reg_commuator} then
implies the assertion.
\end{proof}
\begin{rem}
Note that a prototype of evolutionary operators are operators defined
as multiplication by a function, see also \cite[Example 2.1.1]{W16_H}.
This prototype will be discussed next.
\end{rem}

\subsection{Commutators for multiplication operators}

In this subsection we inspect the conditions on the operator $\mathcal{N}$
assumed in \prettyref{thm:max_reg_commuator} for the concrete case
of $\mathcal{N}$ being a multiplication operator. More precisely,
we assume the following: Let $N:\R\to L(H)$ be a strongly measurable
bounded mapping. Then $N$ induces an evolutionary operator 
\[
\mathcal{N}:S_{c}(\mathbb{R};H)\to\bigcap_{\rho\geq0}L_{2,\rho}(\R;H),\quad f\mapsto\left(t\mapsto N(t)f(t)\right)
\]
with $\|\mathcal{N}_{\rho}\|_{L(L_{2,\rho}(\R;H))}=\|N\|_{\infty}$
for all $\rho\geq0.$ Note that all continuous extensions $\mathcal{N}_{\rho}$,
$\rho\geq0$, act as multiplication by $N$. We start to give a representation
for the term $\partial_{t,\rho}^{1/2}\phi$ for regular functions
$\phi.$
\begin{lem}
\label{lem:fractional_derivative}Let $\rho>0$ and $\phi\in H_{\rho}^{1}(\R;H)$.
Then 
\[
\left(\partial_{t,\rho}^{1/2}\phi\right)(t)=\frac{1}{2\Gamma(1/2)}\int_{-\infty}^{t}(t-s)^{-3/2}(\phi(t)-\phi(s))\d s\quad(t\in\R\text{ a.e.}).
\]
\end{lem}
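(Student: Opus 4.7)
The plan is to identify the right-hand side with $(\partial_{t,\rho}^{-1/2}\phi')(t)$, which equals $(\partial_{t,\rho}^{1/2}\phi)(t)$ by the functional calculus developed in \prettyref{prop:fractional_integral}. Indeed, since $(\i t+\rho)^{1/2}=(\i t+\rho)^{-1/2}(\i t+\rho)$, the operator identity $\partial_{t,\rho}^{1/2}=\partial_{t,\rho}^{-1/2}\partial_{t,\rho}$ holds on $H_{\rho}^{1}(\R;H)=\dom(\partial_{t,\rho})$, and by the explicit formula in \prettyref{prop:fractional_integral} for the fractional integral,
\[
\bigl(\partial_{t,\rho}^{1/2}\phi\bigr)(t)=\bigl(\partial_{t,\rho}^{-1/2}\phi'\bigr)(t)=\frac{1}{\Gamma(1/2)}\int_{-\infty}^{t}(t-r)^{-1/2}\phi'(r)\d r
\]
for almost every $t\in\R$.

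Next, I would turn the right-hand side of the claimed formula into the expression above. Since $\phi\in H_{\rho}^{1}(\R;H)$ admits an absolutely continuous representative with $\phi'\in L_{2,\rho}(\R;H)$, one has $\phi(t)-\phi(s)=\int_{s}^{t}\phi'(r)\d r$. Substituting this and applying Fubini to exchange the order of integration over the region $\{(s,r):-\infty<s<r<t\}$, the inner $s$-integral becomes
\[
\int_{-\infty}^{r}(t-s)^{-3/2}\d s=\bigl[-2(t-s)^{-1/2}\bigr]_{s=-\infty}^{s=r}=2(t-r)^{-1/2},
\]
so the right-hand side of the claim reduces to $\frac{1}{\Gamma(1/2)}\int_{-\infty}^{t}(t-r)^{-1/2}\phi'(r)\d r$, matching the preceding display.

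The main technical point is therefore the justification of Fubini; that is, finiteness for a.e.\ $t$ of the absolute iterated integral
\[
\int_{-\infty}^{t}(t-s)^{-3/2}\int_{s}^{t}\|\phi'(r)\|\d r\d s=2\int_{-\infty}^{t}(t-r)^{-1/2}\|\phi'(r)\|\d r,
\]
where the equality uses the same $s$-integration as above (now applied to the nonnegative integrand $\|\phi'\|$, which is allowed by Tonelli). Since $\|\phi'\|\in L_{2,\rho}(\R)$ and $\partial_{t,\rho}^{-1/2}$ is bounded on $L_{2,\rho}(\R)$ by \prettyref{prop:fractional_integral}, the right-hand side, viewed as a function of $t$, belongs to $L_{2,\rho}(\R)$ and is therefore finite for almost every $t$. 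This licenses the Fubini exchange and concludes the proof. I expect no further obstacle beyond this integrability bookkeeping; the identity itself is essentially an integration-by-parts (here disguised as a change in order of integration) transforming the classical Marchaud-type formula on the right into the Riemann--Liouville-type formula on the left.
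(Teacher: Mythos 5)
Your proposal is correct and follows essentially the same route as the paper: both begin from $\partial_{t,\rho}^{1/2}\phi=\partial_{t,\rho}^{-1/2}\phi'$ together with the explicit Riemann--Liouville formula from \prettyref{prop:fractional_integral}, and then pass to the claimed Marchaud-type expression via the fundamental theorem of calculus and a Fubini exchange over the triangular region $\{s<r<t\}$. The only cosmetic difference is that you run the computation from the claimed formula back to the fractional integral whereas the paper proceeds forward (by rewriting $s^{-1/2}=\tfrac{1}{2}\int_s^\infty r^{-3/2}\d r$), and you supply the Tonelli/boundedness-of-$\partial_{t,\rho}^{-1/2}$ justification for Fubini explicitly, which the paper leaves implicit.
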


\begin{proof}
We have $\partial_{t,\rho}^{1/2}\phi=\partial_{t,\rho}^{-1/2}\partial_{t,\rho}\phi=\partial_{t,\rho}^{-1/2}\phi'$
(see also \prettyref{prop:fractional_integral}) and thus, by \prettyref{prop:fractional_integral}
\begin{align*}
\left(\partial_{t,\rho}^{1/2}\phi\right)(t) & =\frac{1}{\Gamma(1/2)}\int_{-\infty}^{t}(t-s)^{-1/2}\phi'(s)\d s\\
 & =\frac{1}{\Gamma(1/2)}\int_{0}^{\infty}s{}^{-1/2}\phi'(t-s)\d s\\
 & =\frac{1}{2\Gamma(1/2)}\int_{0}^{\infty}\int_{s}^{\infty}r^{-3/2}\d r\:\phi'(t-s)\d s\\
 & =\frac{1}{2\Gamma(1/2)}\int_{0}^{\infty}r^{-3/2}\int_{0}^{r}\phi'(t-s)\d s\d r\\
 & =\frac{1}{2\Gamma(1/2)}\int_{0}^{\infty}r^{-3/2}(\phi(t)-\phi(t-r))\d r\\
 & =\frac{1}{2\Gamma(1/2)}\int_{-\infty}^{t}(t-s)^{-3/2}(\phi(t)-\phi(s))\d s\quad(t\in\R\text{ a.e.}).\tag*{\qedhere}
\end{align*}
\end{proof}
Using this expression, we can prove our first result on commutators
with the fractional derivative.
\begin{prop}
\label{prop:bdd_commut}Let $\rho_{0}>0$ and assume that $[\mathcal{N}_{\rho_{0}},\partial_{t,\rho_{0}}^{1/2}]$
is bounded as an operator on $L_{2,\rho_{0}}(\R;H).$ Then for each
$\rho\geq\rho_{0}$ we have $\mathcal{N}_{\rho}[H_{\rho}^{1/2}(\R;H)]\subseteq H_{\rho}^{1/2}(\R;H)$
and 
\[
\|[\mathcal{N}_{\rho},\partial_{t,\rho}^{1/2}]\|_{L(L_{2,\rho}(\R;H))}\leq\|[\mathcal{N}_{\rho_{0}},\partial_{t,\rho_{0}}^{1/2}]\|_{L(L_{2,\rho_{0}}(\R;H))}+2\|N\|_{\infty}\left(\sqrt{\rho}-\sqrt{\rho_{0}}\right).
\]
\end{prop}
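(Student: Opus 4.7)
The plan is to conjugate via the Fourier--Laplace transformation in order to transfer the commutator to an operator on $L_{2}(\R;H)$ whose only $\rho$-dependence sits in the symbol of a bounded multiplier; the difference across $\rho$ and $\rho_{0}$ then reduces to an elementary pointwise estimate.

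First I would observe the factorisation $\mathcal{L}_{\rho}=\mathcal{F}U_{\rho}$, where $U_{\rho}\colon L_{2,\rho}(\R;H)\to L_{2}(\R;H)$ is the unitary multiplication by $\e^{-\rho\cdot}$ and $\mathcal{F}$ is the $L_{2}$-Fourier transform. Since $U_{\rho}$ commutes with pointwise multiplication by $N$, conjugation gives $\mathcal{L}_{\rho}\mathcal{N}_{\rho}\mathcal{L}_{\rho}^{\ast}=\mathcal{F}\mathcal{N}_{0}\mathcal{F}^{\ast}$; call this operator $\hat{\mathcal{N}}\in L(L_{2}(\R;H))$. It has norm $\|N\|_{\infty}$ and, crucially, is independent of $\rho$. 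Combined with $\mathcal{L}_{\rho}\partial_{t,\rho}^{1/2}=(\i\m+\rho)^{1/2}\mathcal{L}_{\rho}$ from \prettyref{prop:elementary facts td} and \prettyref{prop:fractional_integral}, this yields the unitary equivalence
\[
\mathcal{L}_{\rho}[\mathcal{N}_{\rho},\partial_{t,\rho}^{1/2}]\mathcal{L}_{\rho}^{\ast}=[\hat{\mathcal{N}},(\i\m+\rho)^{1/2}]
\]
on a common dense domain, with equality of operator norms whenever either side is finite.

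Subtracting the analogous identity at $\rho_{0}$ produces
\[
[\hat{\mathcal{N}},(\i\m+\rho)^{1/2}]-[\hat{\mathcal{N}},(\i\m+\rho_{0})^{1/2}]=[\hat{\mathcal{N}},(\i\m+\rho)^{1/2}-(\i\m+\rho_{0})^{1/2}],
\]
a commutator of $\hat{\mathcal{N}}$ with the bounded multiplication operator of symbol $\xi\mapsto(\i\xi+\rho)^{1/2}-(\i\xi+\rho_{0})^{1/2}$. The elementary bound $\|[S,T]\|\leq 2\|S\|\,\|T\|$ reduces the whole claim to the pointwise multiplier estimate
\[
\sup_{\xi\in\R}\bigl|(\i\xi+\rho)^{1/2}-(\i\xi+\rho_{0})^{1/2}\bigr|\leq\sqrt{\rho}-\sqrt{\rho_{0}},
\]
which is the main technical step.

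I would prove this pointwise inequality via the factorisation
\[
(\i\xi+\rho)^{1/2}-(\i\xi+\rho_{0})^{1/2}=\frac{\rho-\rho_{0}}{(\i\xi+\rho)^{1/2}+(\i\xi+\rho_{0})^{1/2}},
\]
valid because both principal square roots have non-negative real part. Writing $(\i\xi+\rho)^{1/2}=\alpha+\i\beta$ with $\alpha\geq 0$ gives $\alpha^{2}-\beta^{2}=\rho$, whence $\alpha\geq\sqrt{\rho}$; the same bound holds at $\rho_{0}$, so the denominator is at least $\sqrt{\rho}+\sqrt{\rho_{0}}$ and the ratio is at most $(\rho-\rho_{0})/(\sqrt{\rho}+\sqrt{\rho_{0}})=\sqrt{\rho}-\sqrt{\rho_{0}}$, attained at $\xi=0$. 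Assembling everything yields the claimed norm inequality, and the inclusion $\mathcal{N}_{\rho}[H_{\rho}^{1/2}(\R;H)]\subseteq H_{\rho}^{1/2}(\R;H)$ is then immediate from \prettyref{lem:12comm}(a). The one subtlety worth verifying carefully is that the Fourier--Laplace-conjugated multiplication operator is genuinely $\rho$-independent; this is a direct computation on the dense subspace $S_{c}(\R;H)$ that extends by boundedness.
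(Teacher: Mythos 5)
Your proof is correct, but it takes a genuinely different route from the paper's. The paper works on the time side: it invokes the Marchaud-type kernel representation of $\partial_{t,\rho}^{1/2}$ from \prettyref{lem:fractional_derivative}, shows that the operator $\e^{(\rho_{0}-\rho)\cdot}\partial_{t,\rho}^{1/2}-\partial_{t,\rho_{0}}^{1/2}\e^{(\rho_{0}-\rho)\cdot}$ is convolution with an explicit \emph{positive} kernel $k_{\rho_{0}-\rho}(t)=\tfrac{1}{2\Gamma(1/2)}\chi_{\R_{\geq0}}(t)t^{-3/2}(1-\e^{(\rho_{0}-\rho)t})$, and then evaluates its weighted $L_{1}$-norm via the integral representation of $\Gamma$, obtaining $\sqrt{\rho}-\sqrt{\rho_{0}}$. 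You instead conjugate by $\mathcal{L}_{\rho}=\mathcal{F}U_{\rho}$, which removes the $\rho$-dependence from the multiplication operator entirely, reduce to the commutator of the $\rho$-independent $\hat{\mathcal{N}}$ with the bounded multiplier of symbol $\xi\mapsto(\i\xi+\rho)^{1/2}-(\i\xi+\rho_{0})^{1/2}$, and bound the symbol pointwise through the conjugate factorisation $(a-b)=(a^{2}-b^{2})/(a+b)$ together with the lower bound $\Re\,(\i\xi+\rho)^{1/2}\geq\sqrt{\rho}$. The two computations are dual: the paper's kernel is positive, so the operator norm of the convolution equals the integral of the weighted kernel, which is exactly the value of the Fourier symbol at $\xi=0$ where your supremum is attained. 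Your version is arguably more elementary since it dispenses with the kernel lemma and the $\Gamma$-function identity, at the cost of not producing the explicit kernel formula \prettyref{eq:commutator_exponential} which the paper also exploits later in the proof of \prettyref{prop:commutator_0}. One minor point worth stating explicitly: the commutator identity $[\hat{\mathcal{N}},(\i\m+\rho)^{1/2}]-[\hat{\mathcal{N}},(\i\m+\rho_{0})^{1/2}]=[\hat{\mathcal{N}},(\i\m+\rho)^{1/2}-(\i\m+\rho_{0})^{1/2}]$ must be read on the common domain $\{\phi\in\dom((\i\m+\rho_{0})^{1/2}):\hat{\mathcal{N}}\phi\in\dom((\i\m+\rho_{0})^{1/2})\}$, noting that $\dom((\i\m+\rho)^{1/2})=\dom((\i\m+\rho_{0})^{1/2})$ because the symbols differ by a bounded function; density of this set is part of the hypothesis via the convention the paper fixes before \prettyref{lem:12comm}.
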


\begin{proof}
Let $\rho\geq\rho_{0}$. To start off with, we prove the following
statement:
\[
\phi\in H_{\rho}^{1/2}(\R;H)\,\Leftrightarrow\,\e^{(\rho_{0}-\rho)\cdot}\phi\in H_{\rho_{0}}^{1/2}(\R;H).
\]
For this, let $\phi\in H_{\rho}^{1/2}(\R;H)$; that is, $(\i\m+\rho)^{\frac{1}{2}}\mathcal{L}_{\rho}\phi\in L_{2}(\R;H)$.
Note that $\mathcal{L}_{\rho_{0}}\e^{(\rho_{0}-\rho)\cdot}\phi=\mathcal{L}_{\rho}\phi$
and hence, it suffices to show that 
\[
(\i\m+\rho_{0})^{\frac{1}{2}}\mathcal{L}_{\rho}\phi\in L_{2}(\R;H).
\]
The latter however is clear, since $\left(t\mapsto(\i t+\rho_{0})^{1/2}(\i t+\rho)^{-1/2}\right)\in L_{\infty}(\R;H).$
Since this argument is completely symmetric in $\rho$ and $\rho_{0}$,
the asserted equivalence holds. \\
If now $\phi\in H_{\rho}^{1/2}(\R;H)$, we infer that 
\[
\e^{(\rho_{0}-\rho)\cdot}\mathcal{N}_{\rho}\phi=\mathcal{N}_{\rho_{0}}\e^{(\rho_{0}-\rho)\cdot}\phi\in H_{\rho_{0}}^{1/2}(\R;H)
\]
and thus, $\mathcal{N}_{\rho}[H_{\rho}^{1/2}(\R;H)]\subseteq H_{\rho}^{1/2}(\R;H)$.
To compute the norm of the commutator, let $\phi\in C_{c}^{\infty}(\R;H).$
Then we estimate 
\begin{align*}
\|[\mathcal{N}_{\rho},\partial_{t,\rho}^{1/2}]\phi\|_{\rho,0} & =\|\e^{(\rho_{0}-\rho)\cdot}\left(\mathcal{N}_{\rho}\partial_{t,\rho}^{1/2}-\partial_{t,\rho}^{1/2}\mathcal{N}_{\rho}\right)\phi\|_{\rho_{0},0}\\
 & =\|\mathcal{N}_{\rho_{0}}\e^{(\rho_{0}-\rho)\cdot}\partial_{t,\rho}^{1/2}\phi-\e^{(\rho_{0}-\rho)\cdot}\partial_{t,\rho}^{1/2}\mathcal{N}_{\rho}\phi\|_{\rho_{0},0}\\
 & \leq\|[\mathcal{N}_{\rho_{0}},\partial_{t,\rho_{0}}^{1/2}]\e^{(\rho_{0}-\rho)\cdot}\phi\|_{\rho_{0},0}+\|\mathcal{N}_{\rho_{0}}\left(\e^{(\rho_{0}-\rho)\cdot}\partial_{t,\rho}^{1/2}-\partial_{t,\rho_{0}}^{1/2}\e^{(\rho_{0}-\rho)\cdot}\right)\phi\|_{\rho_{0},0}+\\
 & \quad+\|\left(\e^{(\rho_{0}-\rho)\cdot}\partial_{t,\rho}^{1/2}-\partial_{t,\rho_{0}}^{1/2}\e^{(\rho_{0}-\rho)\cdot}\right)\mathcal{N}_{\rho}\phi\|_{\rho_{0},0}.
\end{align*}
Hence, we need to estimate the norm of the operator $\e^{(\rho_{0}-\rho)\cdot}\partial_{t,\rho}^{1/2}-\partial_{t,\rho_{0}}^{1/2}\e^{(\rho_{0}-\rho)\cdot}$.
For this, let $\psi\in C_{c}^{\infty}(\R;H)$ and compute, using \prettyref{lem:fractional_derivative}
\begin{align*}
 & (\e^{(\rho_{0}-\rho)\cdot}\partial_{t,\rho}^{1/2}\psi-\partial_{t,\rho_{0}}^{1/2}\e^{(\rho_{0}-\rho)\cdot}\psi)(t)\\
 & =\e^{(\rho_{0}-\rho)t}\frac{1}{2\Gamma(1/2)}\int_{-\infty}^{t}(t-s)^{-3/2}(\psi(t)-\psi(s))\d s-\\
\quad & -\frac{1}{2\Gamma(1/2)}\int_{-\infty}^{t}(t-s)^{-3/2}(\e^{(\rho_{0}-\rho)t}\psi(t)-\e^{(\rho_{0}-\rho)s}\psi(s))\d s\\
 & =\frac{1}{2\Gamma(1/2)}\int_{-\infty}^{t}(t-s)^{-3/2}\left(\e^{(\rho_{0}-\rho)s}-\e^{(\rho_{0}-\rho)t}\right)\psi(s)\d s\\
 & =\frac{1}{2\Gamma(1/2)}\int_{-\infty}^{t}(t-s)^{-3/2}\left(1-\e^{(\rho_{0}-\rho)(t-s)}\right)\e^{(\rho_{0}-\rho)s}\psi(s)\d s\\
 & =(k_{\rho_{0}-\rho}\ast\e^{(\rho_{0}-\rho)\cdot}\psi)(t),
\end{align*}
where $k_{\mu}(t)\coloneqq\frac{1}{2\Gamma(1/2)}\chi_{\R_{\geq0}}(t)t^{-3/2}(1-\e^{\mu t})$
for $t,\mu\in\R.$ Note that for $\mu=(\rho_{0}-\rho)\leq0$ the function
$k_{\rho_{0}-\rho}$ is positive and hence, using the convolution
theorem, we obtain
\[
\|k_{\rho_{0}-\rho}\ast\|_{L(L_{2,\rho_{0}}(\R;H))}=\int_{\R}k_{\rho_{0}-\rho}(t)\e^{-\rho_{0}t}\d t.
\]
We compute using the integral representation of the $\Gamma$-function
\begin{align*}
\int_{\R}k_{\rho_{0}-\rho}(t)\e^{-\rho_{0}t}\d t & =\frac{1}{2\Gamma(1/2)}\int_{0}^{\infty}t^{-3/2}(1-\e^{(\rho_{0}-\rho)t})\e^{-\rho_{0}t}\d t\\
 & =\frac{1}{\Gamma(1/2)}\int_{0}^{\infty}t^{-1/2}\left(-\rho_{0}\e^{-\rho_{0}t}+\rho\e^{-\rho t}\right)\d t\\
 & =\sqrt{\rho}-\sqrt{\rho_{0}}
\end{align*}
and thus, 
\begin{equation}
\|(\e^{(\rho_{0}-\rho)\cdot}\partial_{t,\rho}^{1/2}-\partial_{t,\rho_{0}}^{1/2}\e^{(\rho_{0}-\rho)\cdot})\phi\|_{\rho_{0},0}\leq\left(\sqrt{\rho}-\sqrt{\rho_{0}}\right)\|\e^{(\rho_{0}-\rho)\cdot}\phi\|_{\rho_{0},0}=\left(\sqrt{\rho}-\sqrt{\rho_{0}}\right)\|\phi\|_{\rho,0}.\label{eq:commutator_exponential}
\end{equation}
Summarising, we obtain the estimate 
\[
\|[\mathcal{N}_{\rho},\partial_{t,\rho}^{1/2}]\phi\|_{\rho,0}\leq\left(\|[\mathcal{N}_{\rho_{0}},\partial_{t,\rho_{0}}^{1/2}]\|_{L(L_{2,\rho_{0}}(\R;H))}+2\|N\|_{\infty}\left(\sqrt{\rho}-\sqrt{\rho_{0}}\right)\right)\|\phi\|_{\rho,0},
\]
which shows the claim. 
\end{proof}
The next proposition is devoted to the limit case $\rho_{0}=0$, which
is the case usually treated in the literature.
\begin{prop}
\label{prop:commutator_0}Assume that $[\mathcal{N}_{0},\partial_{t,0}^{1/2}]$
is bounded as an operator on $L_{2}(\R;H).$ Then for each $\rho\geq0$
we have $\mathcal{N}[H_{\rho}^{1/2}(\R;H)]\subseteq H_{\rho}^{1/2}(\R;H)$
and 
\[
\|[\mathcal{N_{\rho}},\partial_{t,\rho}^{1/2}]\|_{L(L_{2,\rho}(\R;H))}\leq\|[\mathcal{N}_{0},\partial_{t,0}^{1/2}]\|_{L(L_{2}(\R;H))}+2\|N\|_{\infty}\sqrt{\rho}.
\]
\end{prop}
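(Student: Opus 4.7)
The plan is to imitate the proof of Proposition \ref{prop:bdd_commut} in the limit case $\rho_{0}=0$. The only genuinely new ingredient is an analogue of Lemma \ref{lem:fractional_derivative} at $\rho=0$: for $\phi\in C_{c}^{\infty}(\R;H)$ one has
\[
(\partial_{t,0}^{1/2}\phi)(t)=\frac{1}{2\Gamma(1/2)}\int_{-\infty}^{t}(t-s)^{-3/2}\bigl(\phi(t)-\phi(s)\bigr)\d s.
\]
This follows from the Fourier identity $\mathcal{F}[\chi_{[0,\infty)}s^{-1/2}](\xi)=\Gamma(1/2)(\i\xi)^{-1/2}$, which gives $\partial_{t,0}^{1/2}\phi=\Gamma(1/2)^{-1}(\chi_{[0,\infty)}s^{-1/2})\ast\phi'$, followed by the same integration by parts used in Lemma \ref{lem:fractional_derivative}. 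The main obstacle of the proof is concentrated here; once the formula is in hand, the rest is a direct adaptation.

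As a preliminary step, I establish the equivalence $\phi\in H_{\rho}^{1/2}(\R;H)\iff\e^{-\rho\cdot}\phi\in\dom(\partial_{t,0}^{1/2})$. Since $\mathcal{L}_{0}(\e^{-\rho\cdot}\phi)=\mathcal{L}_{\rho}\phi$, this reduces to the pointwise multiplier comparison
\[
|(\i t)^{1/2}|\leq|(\i t+\rho)^{1/2}|\leq|(\i t)^{1/2}|+\sqrt{\rho},
\]
the first bound coming from $|(\i t+\rho)^{1/2}|^{2}=\sqrt{t^{2}+\rho^{2}}\geq|t|=|(\i t)^{1/2}|^{2}$, and the second from the subadditivity $\sqrt{x+y}\leq\sqrt{x}+\sqrt{y}$ applied to $|\i t+\rho|\leq|t|+\rho$.

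For $\phi\in C_{c}^{\infty}(\R;H)$, I test the commutator: using $\e^{-\rho\cdot}\mathcal{N}_{\rho}=\mathcal{N}_{0}\e^{-\rho\cdot}$ and inserting $\pm\partial_{t,0}^{1/2}\e^{-\rho\cdot}$ in both slots, I obtain
\begin{align*}
\e^{-\rho\cdot}[\mathcal{N}_{\rho},\partial_{t,\rho}^{1/2}]\phi &= \mathcal{N}_{0}\bigl(\e^{-\rho\cdot}\partial_{t,\rho}^{1/2}-\partial_{t,0}^{1/2}\e^{-\rho\cdot}\bigr)\phi \\
&\quad + [\mathcal{N}_{0},\partial_{t,0}^{1/2}]\e^{-\rho\cdot}\phi - \bigl(\e^{-\rho\cdot}\partial_{t,\rho}^{1/2}-\partial_{t,0}^{1/2}\e^{-\rho\cdot}\bigr)\mathcal{N}_{\rho}\phi.
\end{align*}
The middle piece is controlled by the hypothesis $\|[\mathcal{N}_{0},\partial_{t,0}^{1/2}]\|_{L(L_{2})}\|\phi\|_{\rho,0}$. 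For the two exchange pieces, the integral formulas at $\rho$ and at $0$ yield, for any $\psi\in C_{c}^{\infty}$,
\[
\bigl(\e^{-\rho\cdot}\partial_{t,\rho}^{1/2}\psi-\partial_{t,0}^{1/2}\e^{-\rho\cdot}\psi\bigr)(t)=\bigl(k_{-\rho}\ast\e^{-\rho\cdot}\psi\bigr)(t),\qquad k_{-\rho}(t)=\frac{\chi_{\R_{\geq0}}(t)\,t^{-3/2}(1-\e^{-\rho t})}{2\sqrt{\pi}}.
\]
Since $k_{-\rho}\geq0$, its $L_{1}(\R)$-norm, computed via $u=\rho t$ and $\int_{0}^{\infty}u^{-3/2}(1-\e^{-u})\d u=2\sqrt{\pi}$ (one integration by parts), equals $\sqrt{\rho}$. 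Young's inequality then bounds each exchange piece by $\sqrt{\rho}\|\phi\|_{\rho,0}$ respectively $\sqrt{\rho}\|\mathcal{N}_{\rho}\phi\|_{\rho,0}\leq\sqrt{\rho}\|N\|_{\infty}\|\phi\|_{\rho,0}$, and assembling everything gives the asserted inequality on $C_{c}^{\infty}(\R;H)$. The invariance $\mathcal{N}[H_{\rho}^{1/2}(\R;H)]\subseteq H_{\rho}^{1/2}(\R;H)$ together with the operator norm estimate then follow by density of $C_{c}^{\infty}(\R;H)$ in $L_{2,\rho}(\R;H)$ and closedness of $\partial_{t,\rho}^{1/2}$.
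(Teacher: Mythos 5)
Your proof is correct and, in fact, takes a genuinely different route from the paper at the one place the paper flags as delicate. The authors explicitly note that they ``do not have an explicit integral representation for $\partial_{t,0}^{1/2}$,'' since \prettyref{prop:fractional_integral} gives the Riemann--Liouville kernel only for $\rho>0$ (at $\rho=0$, $\partial_{t,0}^{-1/2}$ is an unbounded multiplier). They circumvent this by writing, for $\psi\in H^{1/2}(\R;H)$,
\[
\partial_{t,0}^{1/2}\psi=\lim_{\rho_0\to0}\e^{-\rho_0\cdot}\partial_{t,\rho_0}^{1/2}\e^{\rho_0\cdot}\psi
\]
in $L_2(\R;H)$ (by dominated convergence of the symbols $(\i\cdot+\rho_0)^{1/2}$), and then push the $(\sqrt{\rho}-\sqrt{\rho_0})$ bound from \prettyref{eq:commutator_exponential} to the limit $\rho_0\to0$. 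You instead derive the Marchaud-type formula
\[
(\partial_{t,0}^{1/2}\phi)(t)=\frac{1}{2\Gamma(1/2)}\int_{-\infty}^{t}(t-s)^{-3/2}\bigl(\phi(t)-\phi(s)\bigr)\,\mathrm{d}s
\]
directly for $\phi\in C_c^\infty(\R;H)$, noting that $\phi'=\partial_{t,0}\phi$ lies in $\dom(\partial_{t,0}^{-1/2})$ because $\widehat{\phi'}$ vanishes at the origin, and then run exactly the same decomposition and Young-type kernel estimate as in \prettyref{prop:bdd_commut}; your computation $\int_0^\infty u^{-3/2}(1-\e^{-u})\,\mathrm{d}u=2\sqrt{\pi}$ reproduces $\|k_{-\rho}\|_{L_1}=\sqrt{\rho}$, the same constant the paper obtains by its limiting argument. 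What the paper's route buys is that it avoids any appeal to a distributional Fourier transform identity and reuses the $\rho>0$ machinery unmodified; what your route buys is an explicit $\rho=0$ kernel formula and a slightly more self-contained, single-pass estimate with no approximation step. The only blemish in your write-up is the normalisation of the cited Fourier identity $\mathcal{F}[\chi_{[0,\infty)}s^{-1/2}]=\Gamma(1/2)(\i\xi)^{-1/2}$: with the paper's $\frac{1}{\sqrt{2\pi}}$-normalised $\mathcal{F}$ and the convolution theorem $\mathcal{F}(f*g)=\sqrt{2\pi}\,\mathcal{F}f\cdot\mathcal{F}g$, the correct constant is $\Gamma(1/2)/\sqrt{2\pi}$; the final convolution formula you use is nevertheless normalisation-independent and unaffected, so this is cosmetic.
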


\begin{proof}
Similar to the proof of \prettyref{prop:bdd_commut}, at first we
show 
\[
\phi\in H_{\rho}^{1/2}(\R;H)\iff\e^{-\rho\cdot}\phi\in H^{1/2}(\R;H).
\]
 For this, let $\phi\in H_{\rho}^{1/2}(\R;H).$ Then $(\i\m+\rho)^{1/2}\mathcal{L}_{\rho}\phi=(\i\m+\rho)^{1/2}\mathcal{F}\e^{-\rho\cdot}\phi\in L_{2}(\R;H).$
The latter implies $(\i\m)^{1/2}\mathcal{F}\e^{-\rho\cdot}\phi\in L_{2}(\R;H)$
and hence, $\e^{-\rho\cdot}\phi\in H^{1/2}(\R;H).$ If, on the other
hand, $\e^{-\rho\cdot}\phi\in H^{1/2}(\R;H)$, then $\phi\in L_{2,\rho}(\R;H)$
and $(\i\m)^{1/2}\mathcal{L}_{\rho}\phi=(\i\m)^{1/2}\mathcal{F}\e^{-\rho\cdot}\phi\in L_{2}(\R;H)$
and hence, 
\begin{align*}
 & \int_{\R}\|(\i t+\rho)^{1/2}\left(\mathcal{L}_{\rho}\phi\right)(t)\|_{H}^{2}\d t\\
 & =\int_{[-1,1]}\|(\i t+\rho)^{1/2}\left(\mathcal{L}_{\rho}\phi\right)(t)\|_{H}^{2}\d t+\int_{|t|>1}\|(\i t+\rho)^{1/2}\left(\mathcal{L}_{\rho}\phi\right)(t)\|_{H}^{2}\d t\\
 & \leq(1+\rho^{2})^{1/2}\|\phi\|_{L_{2,\rho}(\R;H)}^{2}+(1+\rho^{2})^{1/2}\int_{\R}\|(\i t)^{1/2}\left(\mathcal{L}_{\rho}\phi\right)(t)\|_{H}^{2}\d t<\infty
\end{align*}
and thus, $\phi\in H_{\rho}^{1/2}(\R;H).$ 

By the same argumentation as in the proof of \prettyref{prop:bdd_commut}
we infer $\mathcal{N}[H_{\rho}^{1/2}(\R;H)]\subseteq H_{\rho}^{1/2}(\R;H)$.
Following the lines of the proof of \prettyref{prop:bdd_commut},
we need to find an estimate for $\|\left(\e^{-\rho\cdot}\partial_{t,\rho}^{1/2}-\partial_{t,0}^{1/2}\e^{-\rho\cdot}\right)\phi\|_{0,0},$
for $\phi\in C_{c}^{\infty}(\R;H).$ The main problem in proving such
an estimate is that we do not have an explicit integral representation
for $\partial_{t,0}^{1/2}$ thus far. However, we have 
\begin{align*}
\partial_{t,0}^{1/2}\psi & =\mathcal{F}^{\ast}(\i\m)^{1/2}\mathcal{F}\psi\\
 & =\lim_{\rho_{0}\to0}\mathcal{F}^{\ast}(\i\m+\rho_{0})^{1/2}\mathcal{F}\psi\\
 & =\lim_{\rho_{0}\to0}\e^{-\rho_{0}\cdot}\mathcal{L}_{\rho_{0}}^{\ast}(\i\m+\rho_{0})^{1/2}\mathcal{L}_{\rho_{0}}\e^{\rho_{0}\cdot}\psi\\
 & =\lim_{\rho_{0}\to0}\e^{-\rho_{0}\cdot}\partial_{t,\rho_{0}}^{1/2}\e^{\rho_{0}\cdot}\psi
\end{align*}
with convergence in $L_{2}(\R;H)$ for each $\psi\in H^{1/2}(\R;H)$,
where we have used dominated convergence in the second line. Thus,
for $\phi\in C_{c}^{\infty}(\R;H)$ we have that 
\begin{align*}
\|\left(\e^{-\rho\cdot}\partial_{t,\rho}^{1/2}-\partial_{t,0}^{1/2}\e^{-\rho\cdot}\right)\phi\|_{0,0} & =\lim_{\rho_{0}\to0}\|\left(\e^{-\rho\cdot}\partial_{t,\rho}^{1/2}-\e^{-\rho_{0}\cdot}\partial_{t,\rho_{0}}^{1/2}\e^{(\rho_{0}-\rho)\cdot}\right)\phi\|_{0,0}\\
 & =\lim_{\rho_{0}\to0}\|\left(\e^{(\rho_{0}-\rho)\cdot}\partial_{t,\rho}^{1/2}-\partial_{t,\rho_{0}}^{1/2}\e^{(\rho_{0}-\rho)\cdot}\right)\phi\|_{\rho_{0},0}\\
 & \leq\lim_{\rho_{0}\to0}\left(\sqrt{\rho}-\sqrt{\rho_{0}}\right)\|\phi\|_{\rho,0}\\
 & =\sqrt{\rho}\|\phi\|_{\rho,0},
\end{align*}
where we have used \prettyref{eq:commutator_exponential}. Following
the lines of the proof of \prettyref{prop:bdd_commut} the assertion
follows.
\end{proof}
\begin{rem}
Note that \prettyref{thm:max_reg_commuator} in combination with \prettyref{prop:bdd_commut}
or \prettyref{prop:commutator_0} yields maximal regularity of the
corresponding evolutionary equation, if $\mathcal{N}$ has a bounded
commutator for some $\rho\geq0$. In particular, this covers the case
treated in \cite{Auscher_Egert2016} (see also \prettyref{subsec:Divergence-form-equations}
below).
\end{rem}

Our next goal is to prove the following proposition.
\begin{prop}
\label{prop:Zacher}Assume that 
\begin{equation}
C\coloneqq\int_{\R}\int_{\R}\frac{\|N(t)-N(s)\|^{2}}{|t-s|^{2+\delta}}\d t\d s<\infty\label{eq:Zacher}
\end{equation}
for some $\delta>0.$ Then 
\[
\forall\varepsilon>0\,\exists c>0\:\forall\phi\in H_{\rho}^{1/2}(\R;H):\;\|[\partial_{t,\rho}^{1/2},\mathcal{N}_{\rho}]\phi\|_{\rho,0}\leq\varepsilon\|\phi\|_{\rho,1/2}+c\|\phi\|_{\rho,0}.
\]
\end{prop}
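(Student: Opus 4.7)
The plan is to combine the integral representation of $\partial_{t,\rho}^{1/2}$ from \prettyref{lem:fractional_derivative} with a split of the integration variable at a small threshold $\eta>0$, so as to separate a near-diagonal contribution (to be controlled by $\|\phi\|_{\rho,1/2}$ with a constant of order $\eta^{\delta/2}$) from a far-diagonal contribution (to be controlled by $\|\phi\|_{\rho,0}$). The starting point is the pointwise formula
\[
\bigl([\partial_{t,\rho}^{1/2},\mathcal{N}_{\rho}]\phi\bigr)(t)=\frac{1}{2\sqrt{\pi}}\int_{-\infty}^{t}\frac{\bigl(N(t)-N(s)\bigr)\phi(s)}{(t-s)^{3/2}}\d s,
\]
obtained for $\phi\in C_{c}^{\infty}(\R;H)$ by applying \prettyref{lem:fractional_derivative} to both $\phi$ and $\mathcal{N}\phi$ (the latter after approximating $N$ by smooth multipliers, justified by dominated convergence on the basis of \prettyref{eq:Zacher}). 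Writing the integral as $I_{\text{near}}\phi+I_{\text{far}}\phi$ with $I_{\text{near}}$ carrying the integration over $(t-\eta,t)$, the far part is treated exactly as in the proof of \prettyref{prop:commutator_0}: the bound $\|N(t)-N(s)\|\leq 2\|N\|_{\infty}$ together with Young's inequality in $L_{2,\rho}$ applied to the $L_{1}(\R)$-kernel $\chi_{[\eta,\infty)}u^{-3/2}\e^{-\rho u}$ yields $\|I_{\text{far}}\phi\|_{\rho,0}\leq c_{1}(\|N\|_{\infty},\eta,\rho)\|\phi\|_{\rho,0}$ with $c_{1}=O(\eta^{-1/2})$.

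For the near part the key is pointwise Cauchy--Schwarz with the tailored exponent split $(t-s)^{-3/2}=(t-s)^{-(1+\delta/2)}\cdot (t-s)^{-(1/2-\delta/2)}$, which gives
\[
\|I_{\text{near}}\phi(t)\|^{2}\leq\frac{\eta^{\delta}}{\delta}\int_{t-\eta}^{t}\frac{\|N(t)-N(s)\|^{2}\|\phi(s)\|^{2}}{(t-s)^{2+\delta}}\d s.
\]
Integrating against $\e^{-2\rho t}\d t$, applying Fubini and substituting $u=t-s$ leads to
\[
\|I_{\text{near}}\phi\|_{\rho,0}^{2}\leq\frac{\eta^{\delta}}{\delta}\int h_{\rho}(s)\,\|\phi(s)\|^{2}\,\e^{-2\rho s}\d s,
\]
with $h_{\rho}(s)\coloneqq\int_{0}^{\eta}\|N(s+u)-N(s)\|^{2}\e^{-2\rho u}u^{-(2+\delta)}\,\d u$; crucially, $h_{\rho}\in L_{1}(\R)$ with $\|h_{\rho}\|_{L_{1}}\leq C_{N}^{2}/2$ thanks to \prettyref{eq:Zacher}.

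To recast this $L_{1}$-weighted integral in the required form $\varepsilon^{2}\|\phi\|_{\rho,1/2}^{2}+c^{2}\|\phi\|_{\rho,0}^{2}$, I would truncate the weight at level $M>0$, writing $h_{\rho}=(h_{\rho}\wedge M)+(h_{\rho}-M)_{+}$: the bounded part contributes $M\|\phi\|_{\rho,0}^{2}$, while the tail $(h_{\rho}-M)_{+}$ has $L_{1}$-norm tending to $0$ as $M\to\infty$. To estimate the tail against $\|\phi\|_{\rho,1/2}$, one invokes the Sobolev embedding $H_{\rho}^{1/2}(\R;H)\hookrightarrow L_{p,\rho}(\R;H)$ for every $p\in[2,\infty)$ from \prettyref{lem:Sobolev} together with H\"older's inequality in the time variable. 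Given $\varepsilon>0$, one would pick $M$ large enough that the tail contribution is bounded by $\varepsilon^{2}\delta/\eta^{\delta}$ times $\|\phi\|_{\rho,1/2}^{2}$, and then set $c\coloneqq c_{1}(\eta)+\sqrt{\eta^{\delta}M/\delta}$. The hard step is precisely this tail bound: since $h_{\rho}$ is only known to lie in $L_{1}$, Hölder with $\phi\in L_{p,\rho}$ at finite $p$ nominally asks for $L_{q}$-control on $(h_{\rho}-M)_{+}$ for some $q>1$, which is not automatic. The delicate point is therefore to interpolate the $L_{\infty}$-truncation bound at level $M$ against the vanishing $L_{1}$-mass of the tail while picking the Sobolev exponent $p$ large but finite, compensating for the failure of $H_{\rho}^{1/2}$ to embed into $L_{\infty}$ in a single time variable.
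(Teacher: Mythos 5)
The first half of your argument tracks the paper closely: the integral representation from \prettyref{lem:fractional_derivative} for the commutator, the split at a small threshold $\eta$, the Young--inequality bound $\|I_{\text{far}}\phi\|_{\rho,0}\lesssim \eta^{-3/2}\rho^{-1}\|\phi\|_{\rho,0}$ for the far part, and the Cauchy--Schwarz estimate for the near part are all exactly in the spirit of the paper's proof. (The regularisation of $N$ also needs more care than a passing remark: the paper does it via \prettyref{lem:regularise} and needs to verify that the mollification $N_{\varepsilon}$ still satisfies \prettyref{eq:Zacher} with a uniform constant.)

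The genuine gap is where you yourself flag it. After Fubini you are left with the weighted estimate $\|I_{\text{near}}\phi\|_{\rho,0}^{2}\lesssim\eta^{\delta}\int h_{\rho}(s)\,\|\phi(s)\|^{2}\e^{-2\rho s}\d s$, where the only information about $h_{\rho}$ is $h_{\rho}\in L_{1}(\R)$ (from \prettyref{eq:Zacher}); there is no pointwise bound and no $L_{q}$-control for any $q>1$. Pairing an $L_{1}$-weight against $\|\phi(\cdot)\|^{2}\e^{-2\rho\cdot}$ would require $L_{\infty}$-control on the latter, and $H_{\rho}^{1/2}$ does \emph{not} embed into $L_{\infty}$; the truncation $h_{\rho}=(h_{\rho}\wedge M)+(h_{\rho}-M)_{+}$ makes the $L_{1}$-mass of the tail small but, without higher integrability, Hölder against $\phi\in L_{p,\rho}$ (finite $p$) cannot close. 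This is not a merely "delicate" step to be optimised over $p$; it simply does not follow from $L_{1}$-smallness alone. Your choice of Cauchy--Schwarz split is also exactly critical (it corresponds to $\alpha=1-\delta$ in the paper's parametrisation), which leaves no room to trade decay in $t-s$ against integrability of $N$.

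The paper sidesteps this by \emph{not} integrating out $t-s$ first. It keeps the two-dimensional near-diagonal integral intact, introduces a free parameter $\alpha\in\left]1-\delta,1\right[$ in the Cauchy--Schwarz step, and then applies Hölder on the strip $\{(s,t):t-\eta<s<t\}$ with conjugate exponents $p'/2$ and $p/2$ (where $\frac{1}{p}+\frac{1}{p'}=\frac{1}{2}$, $p'\in\left]2,2\frac{2+\delta}{3-\alpha}\right]$). The $L_{p'}$-factor is then bounded by the Zacher integral by writing $(t-s)^{\frac{(-3+\alpha)}{2}p'}\|N(t)-N(s)\|^{p'}\leq(t-s)^{-(2+\delta)}\|N(t)-N(s)\|^{2}\,(2\|N\|_{\infty})^{p'-2}$ (using $t-s\leq\eta\leq1$ and the constraint on $p'$), while the $L_{p}$-factor carries $\eta^{1/p}\|\phi\|_{\rho,p}\lesssim\eta^{1/p}\|\phi\|_{\rho,1/2}$ by \prettyref{lem:Sobolev}. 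Smallness of the $\|\phi\|_{\rho,1/2}$-coefficient then comes from the explicit powers of $\eta$, with no truncation and no one-dimensional $L_{1}$-weight ever appearing. To repair your proof you would need to replace the reduction to $h_{\rho}$ by this two-dimensional Hölder argument (and take $\alpha$ strictly inside $\left]1-\delta,1\right[$ rather than at the endpoint).
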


\begin{rem}
Assumption \prettyref{eq:Zacher} is the main assumption imposed in
\cite[Corollary 1.1]{Dier_Zacher2017}.
\end{rem}

In order to prove \prettyref{prop:Zacher}, we want to apply \prettyref{lem:fractional_derivative}
to derive an integral expression for the commutator. Since \prettyref{lem:fractional_derivative}
just holds for functions in $H_{\rho}^{1}(\R;H),$ we need to regularise
$\mathcal{N}.$ 
\begin{lem}
\label{lem:regularise}For $\varepsilon>0$ we define 
\[
N_{\varepsilon}(t)\coloneqq\frac{1}{\varepsilon}\int_{t}^{t+\varepsilon}N(s)\d s\quad(t\in\R),
\]
where the integral is defined in the strong sense. We denote the associated
multiplication operator by $\mathcal{N}_{\varepsilon}$. Then the
following statements hold:

\begin{enumerate}[(a)]

\item For each $\varepsilon>0$ we have $\|N_{\varepsilon}\|_{\infty}\leq\|N\|_{\infty}$
and $\mathcal{N}_{\varepsilon,\rho}[H_{\rho}^{1}(\R;H)]\subseteq H_{\rho}^{1}(\R;H)$

\item $\mathcal{N}_{\varepsilon,\rho}\to\mathcal{N}_{\rho}$ strongly
in $L_{2,\rho}(\R;H)$ as $\varepsilon\to0$, 

\item If there exist $c_{1},c_{2}\geq0$ such that 
\[
\|[\partial_{t,\rho}^{1/2},\mathcal{N}_{\varepsilon,\rho}]\phi\|_{\rho,0}\leq c_{1}\|\phi\|_{\rho,1/2}+c_{2}\|\phi\|_{\rho,0}
\]
for each $\phi\in H_{\rho}^{1/2}(\R;H)$ and $\varepsilon>0$, then
$\mathcal{N}_{\rho}[H_{\rho}^{1/2}(\R;H)]\subseteq H_{\rho}^{1/2}(\R;H)$
with 
\[
\|[\partial_{t,\rho}^{1/2},\mathcal{N}_{\rho}]\phi\|_{\rho,0}\leq c_{1}\|\phi\|_{\rho,1/2}+c_{2}\|\phi\|_{\rho,0}
\]
for all $\phi\in H_{\rho}^{1/2}(\R;H).$

\end{enumerate}
\end{lem}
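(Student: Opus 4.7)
The plan is to handle the three parts in order, since part (c) leverages parts (a) and (b). Throughout, the key feature of $N_\varepsilon$ is that it is Lipschitz continuous with a uniform bound on the $L_\infty$-norm, and it converges to $N$ in a Lebesgue-differentiation sense.

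For part (a), the bound $\|N_\varepsilon\|_\infty \leq \|N\|_\infty$ is immediate from the triangle inequality applied to the Bochner integral defining $N_\varepsilon(t)$. For the $H_\rho^1$-invariance, I would observe that $N_\varepsilon$ is (strongly) absolutely continuous with almost-everywhere derivative $N_\varepsilon'(t) = \varepsilon^{-1}(N(t+\varepsilon)-N(t))$, which is bounded by $2\|N\|_\infty/\varepsilon$. A standard product-rule argument (first on smooth test functions, then extended by density using the continuity of all involved multiplication operators in $L_{2,\rho}$) then yields $\partial_{t,\rho}(\mathcal{N}_{\varepsilon,\rho}\phi) = \mathcal{N}_{\varepsilon,\rho}'\phi+\mathcal{N}_{\varepsilon,\rho}\partial_{t,\rho}\phi \in L_{2,\rho}(\mathbb{R};H)$ for $\phi\in H_\rho^1$.

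For part (b), I would first verify the pointwise statement $N_\varepsilon(t)x \to N(t)x$ for a.e.\ $t$ and every $x\in H$, which is a consequence of the Lebesgue differentiation theorem applied to the strongly measurable Bochner-integrable functions $s\mapsto N(s)x$. Then for $\phi$ a simple function taking only finitely many values $x_1,\dots,x_n$, one gets pointwise convergence $N_\varepsilon(t)\phi(t)\to N(t)\phi(t)$ for a.e.\ $t$, and since $\|N_\varepsilon(t)\phi(t)\|\leq\|N\|_\infty\|\phi(t)\|$, dominated convergence provides $\mathcal{N}_{\varepsilon,\rho}\phi\to\mathcal{N}_\rho\phi$ in $L_{2,\rho}(\mathbb{R};H)$. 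The general case follows from density of simple functions in $L_{2,\rho}(\mathbb{R};H)$ together with the uniform bound $\|\mathcal{N}_{\varepsilon,\rho}\|\leq\|N\|_\infty$.

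For part (c), the core obstruction is to pass the commutator estimate from the regularisations to the limit. Given $\phi\in H_\rho^{1/2}(\mathbb{R};H)$, we have
\[
\partial_{t,\rho}^{1/2}\mathcal{N}_{\varepsilon,\rho}\phi=\mathcal{N}_{\varepsilon,\rho}\partial_{t,\rho}^{1/2}\phi+[\partial_{t,\rho}^{1/2},\mathcal{N}_{\varepsilon,\rho}]\phi,
\]
and by the hypothesis of (c) together with $\|\mathcal{N}_{\varepsilon,\rho}\|\leq\|N\|_\infty$, the right-hand side is bounded in $L_{2,\rho}(\mathbb{R};H)$ uniformly in $\varepsilon>0$. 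By weak compactness of bounded sets in the Hilbert space $L_{2,\rho}(\mathbb{R};H)$, extract a sequence $\varepsilon_n\to 0$ such that $\partial_{t,\rho}^{1/2}\mathcal{N}_{\varepsilon_n,\rho}\phi\rightharpoonup w$ for some $w\in L_{2,\rho}(\mathbb{R};H)$. Part (b) gives $\mathcal{N}_{\varepsilon_n,\rho}\phi\to\mathcal{N}_\rho\phi$ strongly; applying weak-closedness of the closed operator $\partial_{t,\rho}^{1/2}$ shows $\mathcal{N}_\rho\phi\in H_\rho^{1/2}(\mathbb{R};H)$ and $\partial_{t,\rho}^{1/2}\mathcal{N}_\rho\phi=w$. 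Using (b) once more on $\mathcal{N}_{\varepsilon_n,\rho}\partial_{t,\rho}^{1/2}\phi\to\mathcal{N}_\rho\partial_{t,\rho}^{1/2}\phi$ strongly, we conclude $[\partial_{t,\rho}^{1/2},\mathcal{N}_{\varepsilon_n,\rho}]\phi\rightharpoonup [\partial_{t,\rho}^{1/2},\mathcal{N}_\rho]\phi$. The desired inequality now follows from weak lower semicontinuity of the norm:
\[
\|[\partial_{t,\rho}^{1/2},\mathcal{N}_\rho]\phi\|_{\rho,0}\leq\liminf_{n\to\infty}\|[\partial_{t,\rho}^{1/2},\mathcal{N}_{\varepsilon_n,\rho}]\phi\|_{\rho,0}\leq c_1\|\phi\|_{\rho,1/2}+c_2\|\phi\|_{\rho,0}.
\]
The main obstacle is ensuring that one can identify $w$ as $\partial_{t,\rho}^{1/2}\mathcal{N}_\rho\phi$; this rests on the (weak) closedness of $\partial_{t,\rho}^{1/2}$, which is the cleanest substitute for the absence of an explicit limiting procedure on the commutator itself.
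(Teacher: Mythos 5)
Your proof is correct and follows essentially the same route as the paper: the $L_\infty$-bound and product-rule/weak-derivative argument for (a), Lebesgue differentiation plus dominated convergence plus a density argument with the uniform operator bound for (b), and a uniform $H_\rho^{1/2}$-bound yielding a weakly convergent (sub)sequence whose limit is identified via closedness of $\partial_{t,\rho}^{1/2}$ and strong $L_{2,\rho}$-convergence, closed off by weak lower semicontinuity of the norm for (c). The only slight cosmetic difference is that you make the appeal to weak closedness and the use of $\liminf$ explicit, which the paper leaves a little more implicit.
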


\begin{proof}
\begin{enumerate}[(a)]

\item Let $\varepsilon>0$. The estimate $\|N_{\varepsilon}\|_{\infty}\leq\|N\|_{\infty}$
is obvious. Let $u\in H_{\rho}^{1}(\R;H).$ In order to show $\mathcal{N}_{\varepsilon,\rho}u\in H_{\rho}^{1}(\R;H)$,
let $\phi\in C_{c}^{\infty}(\R).$ Then we compute 
\begin{align*}
 & \int_{\R}\left(\mathcal{N}_{\varepsilon,\rho}u\right)(t)\phi'(t)\d t\\
 & =\frac{1}{\varepsilon}\int_{\R}\int_{t}^{t+\varepsilon}N(s)u(t)\phi'(t)\d s\d t\\
 & =\frac{1}{\varepsilon}\int_{\R}\int_{0}^{\varepsilon}N(s+t)u(t)\phi'(t)\d s\d t\\
 & =\frac{1}{\varepsilon}\int_{0}^{\varepsilon}\int_{\R}N(s+t)u(t)\phi'(t)\d t\d s\\
 & =\frac{1}{\varepsilon}\int_{0}^{\varepsilon}\int_{\R}N(r)u(r-s)\phi'(r-s)\d r\d s\\
 & =\frac{1}{\varepsilon}\int_{\R}N(r)\int_{0}^{\varepsilon}u(r-s)\phi'(r-s)\d s\d r\\
 & =-\frac{1}{\varepsilon}\int_{\R}N(r)\int_{0}^{\varepsilon}\partial_{t,\rho}u(r-s)\phi(r-s)\d s\d r\\
 & \quad-\frac{1}{\varepsilon}\int_{\R}N(r)\left(u(r-\varepsilon)\phi(r-\varepsilon)-u(r)\phi(r)\right)\d r\\
 & =-\int_{\R}\left(\mathcal{N}_{\varepsilon,\rho}\partial_{t,\rho}u\right)(t)\phi(t)\d t-\frac{1}{\varepsilon}\int_{\R}(N(t+\varepsilon)-N(t))u(t)\phi(t)\d t.
\end{align*}
Since $\left(t\mapsto\left(\mathcal{N}_{\varepsilon,\rho}\partial_{t,\rho}u\right)(t)+\left(N(t+\varepsilon)-N(t)\right)u(t)\right)\in L_{2,\rho}(\R;H)$,
the claim follows from \cite[Proposition 4.1.1]{seifert2020evolutionary}. 

\item Let $\psi\in L_{2,\rho}(\R)$ and $x\in H$. Then 
\[
N_{\varepsilon}(t)x\psi(t)-N(t)x\psi(t)=\frac{1}{\varepsilon}\int_{t}^{t+\varepsilon}(N(s)-N(t))x\d s\:\psi(t)\to0\quad(\varepsilon\to0)
\]
for almost every $t\in\R$ by Lebesgue's differentiation theorem.
Moreover, 
\[
\|N_{\varepsilon}(t)x\psi(t)-N(t)x\psi(t)\|_{H}\leq2\|N\|_{\infty}\|\psi(t)x\|_{H}
\]
and thus, $\mathcal{N}_{\varepsilon}(\psi x)\to\mathcal{N}(\psi x)$
in $L_{2,\rho}(\R;H)$ by dominated convergence. Since 
\[
\|\mathcal{N}_{\varepsilon,\rho}\|_{L(L_{2,\rho}(\R;H))}=\|N_{\varepsilon}\|_{\infty}\leq\|N\|_{\infty}
\]
 for each $\varepsilon>0$, the strong convergence follows, since
$\lin\{\psi x\,;\,\psi\in L_{2,\rho}(\R),x\in H\}$ lies dense in
$L_{2,\rho}(\R;H)$. 

\item For $\phi\in H_{\rho}^{1/2}(\R;H)$ we estimate 
\begin{align*}
\|\partial_{t,\rho}^{1/2}\mathcal{N}_{\varepsilon,\rho}\phi\|_{\rho,0} & \leq\|\mathcal{N}_{\varepsilon,\rho}\partial_{t,\rho}^{1/2}\phi\|_{\rho,0}+c_{1}\|\phi\|_{\rho,1/2}+c_{2}\|\phi\|_{\rho,0}\\
 & \leq(\|N\|_{\infty}+c_{1})\|\phi\|_{\rho,1/2}+c_{2}\|\phi\|_{\rho,0}.
\end{align*}
Hence, the family $(\mathcal{N}_{\varepsilon,\rho}\phi)_{\varepsilon>0}$
is bounded in $H_{\rho}^{1/2}(\R;H)$ and thus, w.l.o.g. it converges
weakly in $H_{\rho}^{1/2}(\R;H)$ as $\varepsilon\to0$. Since $\mathcal{N}_{\varepsilon,\rho}\phi\to\mathcal{N}_{\rho}\phi$
in $L_{2,\rho}(\R;H)$ as $\varepsilon\to0$ by (b), we derive that
$\mathcal{N_{\mathbf{\rho}}}\phi\in H_{\rho}^{1/2}(\R;H)$ and $\partial_{t,\rho}^{1/2}\mathcal{N}_{\varepsilon,\rho}\phi\rightharpoonup\partial_{t,\rho}^{1/2}\mathcal{N_{\rho}}\phi$
in $L_{2,\rho}(\R;H).$ Hence, 
\[
\|[\partial_{t,\rho}^{1/2},\mathcal{N}_{\rho}]\phi\|_{\rho,0}\leq\lim_{\varepsilon\to0}\|[\partial_{t,\rho}^{1/2},\mathcal{N}_{\varepsilon,\rho}]\phi\|_{\rho,0}\leq c_{1}\|\phi\|_{\rho,1/2}+c_{2}\|\phi\|_{\rho,0}.\tag*{\qedhere}
\]

\end{enumerate}
\end{proof}
\begin{proof}[Proof of \prettyref{prop:Zacher}]
For the proof, we follow the rationale presented in \cite[Lemma 5.3]{Dier_Zacher2017}.
We first prove this assertion for the case $\mathcal{N}[H_{\rho}^{1}(\R;H)]\subseteq H_{\rho}^{1}(\R;H).$
Let $\phi\in H_{\rho}^{1}(\R;H).$ Then we get by \prettyref{lem:fractional_derivative}
\[
[\partial_{t,\rho}^{1/2},\mathcal{N}_{\rho}]\phi(t)=\frac{1}{2\Gamma(1/2)}\int_{-\infty}^{t}(t-s)^{-3/2}(N(t)-N(s))\phi(s)\d s\quad(t\in\R).
\]
Let now $0<\varepsilon\leq1$. Then
\begin{align*}
\|[\partial_{t,\rho}^{1/2},\mathcal{N_{\rho}}]\phi\|_{\rho,0} & =\frac{1}{2\Gamma(1/2)}\left(\int_{\R}\left\Vert \int_{-\infty}^{t}(t-s)^{-3/2}(N(t)-N(s))\phi(s)\d s\right\Vert _{H}^{2}\e^{-2\rho t}\d t\right)^{\frac{1}{2}}\\
 & \leq\frac{1}{2\Gamma(1/2)}\left(\int_{\R}\left\Vert \int_{t-\varepsilon}^{t}(t-s)^{-3/2}(N(t)-N(s))\phi(s)\d s\right\Vert _{H}^{2}\e^{-2\rho t}\d t\right)^{\frac{1}{2}}+\\
 & \quad+\frac{1}{2\Gamma(1/2)}\left(\int_{\R}\left\Vert \int_{-\infty}^{t-\varepsilon}(t-s)^{-3/2}(N(t)-N(s))\phi(s)\d s\right\Vert _{H}^{2}\e^{-2\rho t}\d t\right)^{\frac{1}{2}}.
\end{align*}
With the help of Young's inequality, the second integral can be estimated
by 
\begin{align*}
 & \left(\int_{\R}\left\Vert \int_{-\infty}^{t-\varepsilon}(t-s)^{-3/2}(N(t)-N(s))\phi(s)\d s\right\Vert _{H}^{2}\e^{-2\rho t}\d t\right)^{\frac{1}{2}}\\
 & \leq2\|N\|_{\infty}\|\left(t\mapsto\chi_{\R_{\geq\varepsilon}}(t)t^{-3/2}\right)\ast\|\phi(\cdot)\|_{H}\|_{\rho,0}\\
 & \leq2\|N\|_{\infty}c\|\phi\|_{\rho,0},
\end{align*}
where 
\[
c\coloneqq\int_{\varepsilon}^{\infty}s^{-3/2}\e^{-\rho s}\d s\leq\varepsilon^{-3/2}\frac{1}{\rho}.
\]
For estimating the first integral, let $\alpha\in]1-\delta,1[.$ Then
\begin{align*}
 & \left\Vert \int_{t-\varepsilon}^{t}(t-s)^{-3/2}(N(t)-N(s))\phi(s)\d s\right\Vert _{H}^{2}\\
 & \leq\int_{t-\varepsilon}^{t}(t-s)^{-\alpha}\d s\int_{t-\varepsilon}^{t}(t-s)^{-3+\alpha}\|N(t)-N(s)\|_{L(H)}^{2}\|\phi(s)\|_{H}^{2}\d s\\
 & =\frac{1}{1-\alpha}\varepsilon^{1-\alpha}\int_{t-\varepsilon}^{t}(t-s)^{-3+\alpha}\|N(t)-N(s)\|_{L(H)}^{2}\|\phi(s)\|_{H}^{2}\d s
\end{align*}
and thus, 
\begin{align*}
 & \left(\int_{\R}\left\Vert \int_{t-\varepsilon}^{t}(t-s)^{-3/2}(N(t)-N(s))\phi(s)\d s\right\Vert _{H}^{2}\e^{-2\rho t}\d t\right)^{\frac{1}{2}}\\
 & \leq\left(\frac{1}{1-\alpha}\varepsilon^{1-\alpha}\right)^{\frac{1}{2}}\left(\int_{\R}\int_{t-\varepsilon}^{t}(t-s)^{-3+\alpha}\|N(t)-N(s)\|_{L(H)}^{2}\|\phi(s)\|_{H}^{2}\d s\;\e^{-2\rho t}\d t\right)^{\frac{1}{2}}.
\end{align*}
Now we choose $p'\in]2,2\frac{2+\delta}{3-\alpha}]$ and $p\in]2,\infty[$
such that $\frac{1}{p}+\frac{1}{p'}=\frac{1}{2}.$ Then we apply the
Hölder inequality (see \prettyref{rem:FourierIP}(b)) and obtain 
\begin{align*}
 & \left(\int_{\R}\int_{t-\varepsilon}^{t}(t-s)^{-3+\alpha}\|N(t)-N(s)\|_{L(H)}^{2}\|\phi(s)\|_{H}^{2}\d s\;\e^{-2\rho t}\d t\right)^{\frac{1}{2}}\\
\leq & \left(\int_{\R}\int_{t-\varepsilon}^{t}\left((t-s)^{\left(-3+\alpha\right)/2}\|N(t)-N(s)\|_{L(H)}\right)^{2}\|\phi(s)\e^{-\rho s}\|_{H}^{2}\d s\;\d t\right)^{\frac{1}{2}}\\
\leq & \left(\int_{\R}\int_{t-\varepsilon}^{t}(t-s)^{\frac{\left(-3+\alpha\right)}{2}p'}\|N(t)-N(s)\|_{L(H)}^{p'}\d s\d t\right)^{\frac{1}{p'}}\left(\int_{\R}\int_{t-\varepsilon}^{t}\|\phi(s)\e^{-\rho s}\|_{H}^{p}\d s\d t\right)^{\frac{1}{p}}\\
= & \varepsilon^{\frac{1}{p}}\left(\int_{\R}\int_{t-\varepsilon}^{t}(t-s)^{\frac{\left(-3+\alpha\right)}{2}p'}\|N(t)-N(s)\|_{L(H)}^{p'}\d s\d t\right)^{\frac{1}{p'}}\|\phi\|_{\rho,p}.
\end{align*}
Since, on the one hand, $t-s\leq\varepsilon\leq1$ for $s\in[t-\varepsilon,t]$
and $\frac{\left(-3+\alpha\right)}{2}p'\geq-(2+\delta)$ and, on the
other hand, $\|N(t)-N(s)\|^{p'}\leq\|N(t)-N(s)\|^{2}\left(2\|N\|_{\infty}\right)^{p'-2}$,
we obtain the estimate 
\begin{align*}
 & \left(\int_{\R}\int_{t-\varepsilon}^{t}(t-s)^{\frac{\left(-3+\alpha\right)}{2}p'}\|N(t)-N(s)\|_{L(H)}^{p'}\d s\d t\right)^{\frac{1}{p'}}\\
 & \leq\left(\int_{\R}\int_{t-\varepsilon}^{t}(t-s)^{\frac{\left(-3+\alpha\right)}{2}p'+2+\delta}\frac{\|N(t)-N(s)\|_{L(H)}^{2}}{\left|t-s\right|^{2+\delta}}\left(2\|N\|_{\infty}\right)^{p'-2}\d s\d t\right)^{\frac{1}{p'}}\\
 & \leq(2\|N\|_{\infty})^{1-\frac{2}{p'}}C^{\frac{1}{p'}}.
\end{align*}
Summarising we have shown 
\begin{multline*}
\|[\partial_{t,\rho}^{1/2},\mathcal{N}]\phi\|_{\rho,0}\leq\frac{1}{2\Gamma(1/2)}\left(\frac{1}{1-\alpha}\varepsilon^{1-\alpha}\right)^{\frac{1}{2}}\varepsilon^{\frac{1}{p}}(2\|N\|_{\infty})^{1-\frac{2}{p'}}C^{\frac{1}{p'}}\|\phi\|_{\rho,p}\\+\frac{1}{2\Gamma(1/2)}2\|N\|_{\infty}\varepsilon^{-3/2}\frac{1}{\rho}\|\phi\|_{\rho,0}.
\end{multline*}
Under the assumption that $\mathcal{N}$ leaves $H_{\rho}^{1}(\R;H)$
invariant, the assertion follows from \prettyref{lem:Sobolev} and
the fact that $H_{\rho}^{1}(\R;H)$ is dense in $H_{\rho}^{1/2}(\R;H).$
\\
If $\mathcal{N}$ does not leave $H_{\rho}^{1}(\R;H)$ invariant,
we may replace it by $\mathcal{N}_{\varepsilon}$ as it is defined
in \prettyref{lem:regularise}. For each $\varepsilon>0$ we thus
obtain
\[
\|[\partial_{t,\rho}^{1/2},\mathcal{N}_{\varepsilon,\rho}]\phi\|_{\rho,0}\leq\left(\frac{1}{1-\alpha}\varepsilon^{1-\alpha}\right)^{\frac{1}{2}}\varepsilon^{\frac{1}{p}}(2\|N_{\varepsilon}\|_{\infty})^{1-\frac{2}{p'}}C_{\varepsilon}^{\frac{1}{p'}}\|\phi\|_{\rho,p}+2\|N_{\varepsilon}\|_{\infty}\varepsilon^{-3/2}\frac{1}{\rho}\|\phi\|_{\rho,0},
\]
where 
\begin{align*}
C_{\varepsilon} & \coloneqq\int_{\R}\int_{\R}\frac{\|N_{\varepsilon}(t)-N_{\varepsilon}(s)\|_{L(H)}^{2}}{|t-s|^{2+\delta}}\d t\d s\\
 & =\int_{\R}\int_{\R}\frac{\frac{1}{\varepsilon^{2}}\|\int_{0}^{\varepsilon}N(r+t)\d r-\int_{0}^{\varepsilon}N(r+s)\d r\|_{L(H)}^{2}}{(t-s)^{2+\delta}}\d s\d t\\
 & \leq\int_{\R}\int_{\R}\frac{1}{\varepsilon^{2}}\frac{\varepsilon\int_{0}^{\varepsilon}\|N(r+t)-N(r+s)\|_{L(H)}^{2}\d r}{(t-s)^{2+\delta}}\d s\d t\\
 & =\frac{1}{\varepsilon}\int_{0}^{\varepsilon}\int_{\R}\int_{\R}\frac{\|N(r+t)-N(r+s)\|_{L(H)}^{2}}{(t-s)^{2+\delta}}\d s\d t\d r=C.
\end{align*}
Since furthermore $\|N_{\varepsilon}\|_{\infty}\leq\|N\|_{\infty}$
by \prettyref{lem:regularise} (a), we can apply \prettyref{lem:regularise}
(c) and thus, the claim follows.
\end{proof}

\section{Examples\label{sec:Examples}}

\subsection{Divergence form equations\label{subsec:Divergence-form-equations}}

In order to treat a first standard example, we consider heat type
equations in this section and analyse the relationship to available
results in the literature. For this, we need to introduce the following
operators.
\begin{defn}
Let $\Omega\subseteq\mathbb{R}^{n}$ be open. We define
\begin{align*}
\grad_{(0)}\colon H_{(0)}^{1}(\Omega)\subseteq L_{2}(\Omega) & \to L_{2}(\Omega)^{n},\\
\phi & \mapsto\nabla\phi,
\end{align*}
and
\begin{align*}
\dive_{(0)}\colon H_{(0)}(\dive,\Omega)\subseteq L_{2}(\Omega)^{n} & \to L_{2}(\Omega),\\
\psi & \mapsto\nabla\cdot\psi,
\end{align*}
where $H^{1}(\Omega)$ is the standard Sobolev space of weakly differentiable
$L_{2}(\Omega)$ functions, $H_{0}^{1}(\Omega)$ the closure of $C_{c}^{\infty}(\Omega)$
in $H^{1}(\Omega)$. Similarly, $H(\dive,\Omega)$ is the space of
$L_{2}(\Omega)$-vector fields with distributional divergence in $L_{2}(\Omega)$
and $H_{0}(\dive,\Omega)$ is the closure of $C_{c}^{\infty}(\Omega)^{n}$
in $H(\dive,\Omega)$. 
\end{defn}

It is not difficult to see that $\dive_{0}^{*}=-\grad$ and $\grad_{0}^{*}=-\dive$,
see \cite[Chapter 6]{seifert2020evolutionary}. 

Next, we rephrase a sufficient criterion from \cite{Auscher_Egert2016},
which guarantees that $\left[\mathcal{N}_{0},\partial_{t,0}^{1/2}\right]$
is bounded. The result itself is a combination of the techniques used
in \cite{Auscher_Egert2016}, the BMO-characterisation by Strichartz
and the commutator estimate by Murray \cite{Murray1985}.
\begin{thm}
\label{thm:egertInt}Let $H=L_{2}(\Omega)^{n}$ for some open $\Omega\subseteq\mathbb{R}^{n}$,
$N\colon\mathbb{R}\times\Omega\to\mathbb{C}^{n\times n}$ measurable
and bounded. Assume there exists $C\geq0$ such that we have for a.e.~$x\in\Omega$
and for all intervals $I\subseteq\mathbb{R}$, 
\begin{equation}
\frac{1}{\ell(I)}\int_{I}\int_{I}\frac{\|N(t,x)-N(s,x)\|_{\mathbb{C}^{n\times n}}^{2}}{|t-s|^{2}}\d s\d t\leq C.\label{eq:egert}
\end{equation}
Then $[\mathcal{N}_{0},\partial_{t,0}^{1/2}]$ is bounded as an operator
in $L_{2}(\mathbb{R};H)$.
\end{thm}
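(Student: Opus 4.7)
The plan is to reduce the joint space-time commutator estimate to a pointwise-in-$x$, scalar commutator estimate on $L_{2}(\mathbb{R})$ via Fubini, and then to combine the Strichartz characterization of the BMO-Sobolev space $I_{1/2}(\operatorname{BMO})$ with Murray's commutator theorem for the fractional derivative.

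First I would identify $L_{2}(\mathbb{R};L_{2}(\Omega)^{n})\cong L_{2}(\Omega;L_{2}(\mathbb{R})^{n})$ via Fubini and observe that, since $\mathcal{N}_{0}$ is pointwise multiplication by $N(t,x)$ and $\partial_{t,0}^{1/2}$ acts only in the $t$-variable, for a.e.\ $x\in\Omega$ the commutator restricts fiberwise to the scalar commutator
\[
\bigl([\mathcal{N}_{0},\partial_{t,0}^{1/2}]u\bigr)(\cdot,x)=[M_{N(\cdot,x)},\partial_{t,0}^{1/2}]u(\cdot,x).
\]
It thus suffices to bound $\|[M_{N(\cdot,x)},\partial_{t,0}^{1/2}]\|_{L(L_{2}(\mathbb{R})^{n})}$ uniformly in $x\in\Omega$ and then to integrate the resulting pointwise estimate in $x$.

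Next, I would invoke Strichartz's characterization of the Bessel-potential space $I_{1/2}(\operatorname{BMO})$: the Strichartz seminorm $\sup_{I}\ell(I)^{-1}\iint_{I\times I}|f(t)-f(s)|^{2}|t-s|^{-(1+2s)}\,\d s\,\d t$ is equivalent to the $I_{s}(\operatorname{BMO})$-norm, and with $s=1/2$ this yields the denominator $|t-s|^{2}$ appearing in \eqref{eq:egert}. The hypothesis therefore says exactly that, entry-wise and uniformly in $x$, $t\mapsto N(t,x)\in I_{1/2}(\operatorname{BMO})$ with norm bounded by $\sqrt{C}$. Murray's theorem \cite{Murray1985} then identifies $I_{1/2}(\operatorname{BMO})$ with the space of symbols $b$ for which $[M_{b},\partial_{t,0}^{1/2}]$ extends boundedly on $L_{2}(\mathbb{R})$, with operator norm controlled by the $I_{1/2}(\operatorname{BMO})$-norm of $b$. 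Applied entry-wise to $N(\cdot,x)$, this yields $\sup_{x\in\Omega}\|[M_{N(\cdot,x)},\partial_{t,0}^{1/2}]\|_{L(L_{2}(\mathbb{R})^{n})}\lesssim\sqrt{C}$, and integrating in $x$ closes the argument.

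The main obstacle is verifying that Murray's theorem is stated in a form directly applicable to $\partial_{t,0}^{1/2}$, whose Fourier symbol is $(\i\xi)^{1/2}$, as opposed to the self-adjoint $|D|^{1/2}$ with symbol $|\xi|^{1/2}$. Since the two operators differ by multiplication with a bounded Fourier multiplier that is a linear combination of the identity and the Hilbert transform, one can either cite Murray's result in the appropriate formulation or decompose the commutator and treat the remaining bounded-multiplier piece via the Coifman--Rochberg--Weiss commutator theorem, which is valid under the strictly weaker hypothesis $b\in\operatorname{BMO}$. A minor secondary check is joint measurability of the fiberwise norm function, which follows from the strong measurability of $N$ and suffices for the concluding Fubini step.
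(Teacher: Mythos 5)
Your primary route is essentially the paper's: the paper also reduces to the commutators with $|\partial_{t,0}|^{1/2}$ and with $\sgn(-\i\partial_{t,0})|\partial_{t,0}|^{1/2}$, invoking the Strichartz characterisation (via \cite[Corollary 7]{Auscher_Egert2016}) for the first and Murray \cite{Murray1985} for the second, and then recombining these into $\partial_{t,0}^{1/2}$ by a linear identity; your fibre-wise-in-$x$ Fubini organisation is a fine alternative to delegating the vector-valued step to Auscher--Egert, and ``cite Murray in the appropriate formulation'' is exactly what the paper does, since Murray's theorem treats the anti-symmetric fractional derivative with symbol $\sgn(\xi)|\xi|^{1/2}$. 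The one ingredient you omit is the regularisation of $N$ by $N_{\varepsilon}$ (the paper's \prettyref{lem:regularise}), which the paper uses to ensure that $\mathcal N$ maps $H^1$ into $H^1$ so that the commutator is honestly defined on a dense set before the bound is proved; if you appeal to Murray verbatim on each fibre this is unproblematic, but it is worth making explicit.

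Your fallback route via Coifman--Rochberg--Weiss, however, does not close. Writing $\partial_{t,0}^{1/2}=T\,|\partial_{t,0}|^{1/2}$ with $T=\tfrac{1}{\sqrt2}\left(I+\i\,\sgn(-\i\partial_{t,0})\right)$ a bounded Fourier multiplier, Leibniz gives
\[
\bigl[\mathcal N_0,\partial_{t,0}^{1/2}\bigr]=\bigl[\mathcal N_0,T\bigr]\,|\partial_{t,0}|^{1/2}+T\,\bigl[\mathcal N_0,|\partial_{t,0}|^{1/2}\bigr].
\]
The second summand is controlled (Strichartz plus the boundedness of $T$), but the first is $\bigl[\mathcal N_0,T\bigr]\,|\partial_{t,0}|^{1/2}$: CRW only yields that $\bigl[\mathcal N_0,T\bigr]$ is bounded on $L_2$, whereas what is needed is boundedness of $\bigl[\mathcal N_0,T\bigr]$ post-composed with the unbounded operator $|\partial_{t,0}|^{1/2}$, i.e.\ a bound from $\dot H^{-1/2}$ into $L_2$, which $N\in\mathrm{BMO}$ alone does not supply. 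This is precisely why the paper decomposes $\partial_{t,0}^{1/2}$ as a \emph{linear combination} of the two full operators $|\partial_{t,0}|^{1/2}$ and $\sgn(-\i\partial_{t,0})|\partial_{t,0}|^{1/2}$ and takes commutators with each whole factor, rather than further splitting $\sgn(-\i\partial_{t,0})|\partial_{t,0}|^{1/2}$ into a multiplier times $|\partial_{t,0}|^{1/2}$. So keep the Murray route; drop the CRW alternative.
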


\begin{proof}
A direct computation shows that $N_{\varepsilon}$ defined in \prettyref{lem:regularise}
satisfies the same condition imposed on $N$ in the present theorem
(with the same $C$). Thus, using \prettyref{lem:regularise} it suffices
to treat the case of Lipschitz continuous $N$. In this case, the
arguments in \cite[Corollary 7]{Auscher_Egert2016} show that both
$[\mathcal{N}_{0},\left|\partial_{t,0}\right|^{1/2}]$ and (using
\cite{Murray1985}) $[\mathcal{N}_{0},\sgn(-i\partial_{t,0})\left|\partial_{t,0}\right|^{1/2}]$
are bounded. Since 
\begin{align*}
\partial_{t,0}^{1/2} & =\chi_{[0,\infty)}(-i\partial_{t,0})\partial_{t,0}^{1/2}+\chi_{(-\infty,0)}(-i\partial_{t,0})\partial_{t,0}^{1/2}\\
 & =\frac{1}{2}\e^{\i\pi/4}\left(\left|\partial_{t,0}\right|^{1/2}+\sgn(-i\partial_{t,0})\left|\partial_{t,0}\right|^{1/2}\right)+\frac{1}{2}\e^{-\i\pi/4}\left(\left|\partial_{t,0}\right|^{1/2}-\sgn(-i\partial_{t,0})\left|\partial_{t,0}\right|^{1/2}\right)
\end{align*}
and due to linearity of the commutator in the second argument, we
infer the assertion.
\end{proof}
\begin{rem}
\label{rem:NinveEgert} Assume $N$ is given as in \prettyref{thm:egertInt}.
Moreover, assume that $\tilde{N}\colon(t,x)\mapsto N(t,x)^{-1}$ is
well-defined and bounded. Then $\tilde{N}$ satisfies the same integral
condition \prettyref{eq:egert} as $N$ does. Indeed, we compute for
a non-empty interval $I\subseteq\mathbb{R}$
\begin{align*}
 & \frac{1}{\ell(I)}\int_{I}\int_{I}\frac{\|\tilde{N}(t,x)-\tilde{N}(s,x)\|_{\mathbb{C}^{n\times n}}^{2}}{|t-s|^{2}}\d s\d t\\
 & =\frac{1}{\ell(I)}\int_{I}\int_{I}\frac{\|N(t,x)^{-1}-N(s,x)^{-1}\|_{\mathbb{C}^{n\times n}}^{2}}{|t-s|^{2}}\d s\d t\\
 & =\frac{1}{\ell(I)}\int_{I}\int_{I}\frac{\|N(t,x)^{-1}\left(N(s,x)-N(t,x)\right)N(s,x)^{-1}\|_{\mathbb{C}^{n\times n}}^{2}}{|t-s|^{2}}\d s\d t\\
 & \leq\|\tilde{N}\|_{\infty}^{4}\frac{1}{\ell(I)}\int_{I}\int_{I}\frac{\|\left(N(s,x)-N(t,x)\right)\|_{\mathbb{C}^{n\times n}}^{2}}{|t-s|^{2}}\d s\d t\leq\|\tilde{N}\|_{\infty}^{4}C.
\end{align*}
\end{rem}

At first we provide a proof of the main theorem in \cite{Auscher_Egert2016}
with the present tools.
\begin{thm}[{\cite[Theorem 2]{Auscher_Egert2016}}]
\label{thm:maxregEgert} Let $\Omega\subseteq\mathbb{R}^{n}$ be
open, $N\colon\mathbb{R}\times\Omega\to\mathbb{C}^{n\times n}$ measurable
and bounded, satisfying \prettyref{eq:egert}. Furthermore assume
that there exists $c>0$ such that for a.e. $(t,x)\in\mathbb{R}\times\Omega$:
\[
\Re\langle\xi,N(t,x)\xi\rangle_{\mathbb{C}^{n}}\geq c\|\xi\|_{\mathbb{C}^{n}}^{2}.
\]

Let $C\colon\dom(C)\subseteq L_{2}(\Omega)\to L_{2}(\Omega)^{n}$
be densely defined and closed such that $\grad_{0}\subseteq C\subseteq\grad,$
$\rho>0$ and let $f\in L_{2,\rho}(\mathbb{R};L_{2}(\Omega)).$ Then
the (unique) solution $u\in L_{2,\rho}(\mathbb{R};L_{2}(\Omega))$
of
\[
\partial_{t,\rho}u+C^{*}\mathcal{N}_{\rho}Cu=f
\]
admits maximal regularity, that is, 
\[
u\in H_{\rho}^{1}(\mathbb{R};L_{2}(\Omega))\cap H_{\rho}^{1/2}(\mathbb{R};\dom(C))\cap L_{2,\rho}(\mathbb{R};\dom(C^{*}\mathcal{N}_{\rho}C)).
\]
Moreover, the solution mapping $f\mapsto u$ is continuous as an operator
from $L_{2,\rho}(\mathbb{R};L_{2}(\Omega))$ into $H_{\rho}^{1}(\mathbb{R};L_{2}(\Omega))\cap H_{\rho}^{1/2}(\mathbb{R};\dom(C))\cap L_{2,\rho}(\mathbb{R};\dom(C^{*}\mathcal{N}_{\rho}C)).$
\end{thm}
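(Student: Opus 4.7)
The plan is to recast the scalar divergence-form equation as a first-order evolutionary system so that Theorem~\ref{thm:max_reg_commuator} applies. Set $H_{0}\coloneqq L_{2}(\Omega)$, $H_{1}\coloneqq L_{2}(\Omega)^{n}$ and introduce the auxiliary unknown $q\coloneqq-\mathcal{N}_{\rho}Cu\in L_{2,\rho}(\R;H_{1})$. Then the original equation is equivalent to
\[
\left(\partial_{t,\rho}\begin{pmatrix}1 & 0\\0 & 0\end{pmatrix}+\begin{pmatrix}0 & 0\\0 & \mathcal{N}_{\rho}^{-1}\end{pmatrix}+\begin{pmatrix}0 & -C^{*}\\C & 0\end{pmatrix}\right)\begin{pmatrix}u\\q\end{pmatrix}=\begin{pmatrix}f\\0\end{pmatrix},
\]
which has the block structure \eqref{eq:A is C star C}, \eqref{eq:M is M00}, \eqref{eq:N is Matrix} with $\mathcal{M}_{00}=1$, $\mathcal{N}_{11}$ given by pointwise multiplication by $N^{-1}$ and all other off-diagonal $\mathcal{N}_{ij}=0$. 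Since $\mathcal{M}=\operatorname{diag}(1,0)$ obviously commutes with $\partial_{t,\rho}^{-1}$ and $\mathcal{M}_{00}=1$ is coercive, the first two structural hypotheses of \prettyref{thm:max_reg_commuator} are immediate.

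Next I verify the two positive-definiteness bounds. Coercivity of $N$ together with the standard inversion argument (used in \prettyref{lem:N11}) yields $\Re\langle N^{-1}(t,x)\xi,\xi\rangle\geq c\|N\|_{\infty}^{-2}\|\xi\|^{2}$ pointwise, so integrating over space-time gives
\[
\Re\langle(\partial_{t,\rho}\mathcal{M}+\mathcal{N})\phi,\phi\rangle_{\rho,0}\geq\rho\|\phi_{0}\|_{\rho,0}^{2}+c\|N\|_{\infty}^{-2}\|\phi_{1}\|_{\rho,0}^{2}\geq c_{0}\|\phi\|_{\rho,0}^{2}
\]
for $c_{0}\coloneqq\min\{\rho,c\|N\|_{\infty}^{-2}\}>0$, which is \prettyref{eq:pd0}. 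For the commutator bound, the BMO-type hypothesis \eqref{eq:egert} together with \prettyref{thm:egertInt} shows that $[\mathcal{N}_{0},\partial_{t,0}^{1/2}]$ is bounded on $L_{2}(\R;H_{1})$ when $\mathcal{N}$ is understood as multiplication by $N$. By \prettyref{rem:NinveEgert} the same integral condition holds for $N^{-1}$, so $[\mathcal{N}_{0}^{-1},\partial_{t,0}^{1/2}]\in L(L_{2}(\R;H_{1}))$ as well. Transferring to exponential weight via \prettyref{prop:commutator_0} gives boundedness of $[\mathcal{N}_{\rho}^{-1},\partial_{t,\rho}^{1/2}]$ on $L_{2,\rho}(\R;H_{1})$ for every $\rho>0$, hence
\[
\|[\partial_{t,\rho}^{1/2},\mathcal{N}]\phi\|_{\rho,0}\leq0\cdot\|\phi\|_{\rho,1/2}+d\,\|\phi\|_{\rho,0}
\]
with $\tilde{c}=0<c_{0}$, so the final hypothesis of \prettyref{thm:max_reg_commuator} holds trivially.

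Applying \prettyref{thm:max_reg_commuator} therefore yields $u\in H_{\rho}^{1}(\R;H_{0})\cap H_{\rho}^{1/2}(\R;\dom(C))$ and $q\in L_{2,\rho}(\R;\dom(C^{*}))\cap H_{\rho}^{1/2}(\R;H_{1})$, together with the corresponding continuity estimate (\prettyref{rem:(a)contestim(b)2ndorder}). Reading off the first row of the block equation, $C^{*}\mathcal{N}_{\rho}Cu=-C^{*}q=f-\partial_{t,\rho}u\in L_{2,\rho}(\R;L_{2}(\Omega))$, so $u\in\dom(C^{*}\mathcal{N}_{\rho}C)$, and the continuous dependence on $f$ is a direct consequence of the continuous dependence of $(u,q)$ on $(f,0)$ in the full system (alternatively invoke the closed graph theorem). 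The only point requiring any delicacy in execution is the commutator transfer from $\rho=0$ to general $\rho>0$, but this is entirely supplied by \prettyref{prop:commutator_0} together with the observation in \prettyref{rem:NinveEgert}; beyond that, the argument is an essentially mechanical verification of the hypotheses of \prettyref{thm:max_reg_commuator}.
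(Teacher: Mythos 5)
Your proof is correct and follows essentially the same route as the paper: reformulate as the first-order block system with $q=-\mathcal{N}_{\rho}Cu$, invoke \prettyref{thm:egertInt}, \prettyref{rem:NinveEgert} and \prettyref{prop:commutator_0} to obtain boundedness of $[\mathcal{N}_{\rho}^{-1},\partial_{t,\rho}^{1/2}]$, and then apply \prettyref{thm:max_reg_commuator} with $\tilde c=0$. The only difference is that you spell out the positive-definiteness verifications and the back-substitution giving $u\in\dom(C^{*}\mathcal{N}_{\rho}C)$, which the paper leaves implicit.
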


\begin{proof}
In order to put ourselves into the framework of evolutionary equations,
we introduce the variable $q\coloneqq-\mathcal{N}_{\rho}Cu$ and consider
\begin{equation}
\left(\partial_{t,\rho}\left(\begin{array}{cc}
1 & 0\\
0 & 0
\end{array}\right)+\left(\begin{array}{cc}
0 & 0\\
0 & \mathcal{N}_{\rho}^{-1}
\end{array}\right)+\left(\begin{array}{cc}
0 & -C^{*}\\
C & 0
\end{array}\right)\right)\left(\begin{array}{c}
u\\
q
\end{array}\right)=\left(\begin{array}{c}
f\\
0
\end{array}\right).\label{eq:egertsystem}
\end{equation}
The latter equation is equivalent to 
\[
\partial_{t,\rho}u+C^{*}\mathcal{N}_{\rho}Cu=f\text{ and }q=-\mathcal{N}_{\rho}Cu.
\]
For a more detailed rationale on this we refer to \cite[Chapter 6]{seifert2020evolutionary}.
It is not diffcult to see that \prettyref{eq:egertsystem} satisfies
the well-posedness condition yielding unique existence of $(u,q)\in L_{2,\rho}(\mathbb{R};L_{2}(\Omega)^{1+n})$
, see e.g.~\cite{Picard2009,seifert2020evolutionary}. Moreover,
by \prettyref{thm:egertInt} together with \prettyref{rem:NinveEgert},
we infer that $\left[\mathcal{N}_{0}^{-1},\partial_{t,0}^{1/2}\right]$
is bounded in $L(L_{2}(\mathbb{R};L_{2}(\Omega)^{n})).$ Thus, \prettyref{prop:commutator_0}
leads to $\left[\mathcal{N}_{\rho}^{-1},\partial_{t,0}^{1/2}\right]$
being bounded in $L(L_{2,\rho}(\mathbb{R};L_{2}(\Omega)^{n})).$ Hence,
\prettyref{thm:max_reg_commuator} applies with $\tilde{c}=0$ to
\prettyref{eq:egertsystem}, which implies the assertion.
\end{proof}
\begin{rem}
(a) Using the extension result in \cite[Lemma 11]{Auscher_Egert2016}
and \prettyref{cor:local-in-time}, we obtain the corresponding local-in-time
result stated in \cite[Theorem 2]{Auscher_Egert2016}.

(b) The assumption of $C$ to be sandwiched inbetween $\grad_{0}$
and $\grad$ is the same assumption as in \cite{Auscher_Egert2016}
asking for either Dirichlet, Neumann or mixed boundary conditions.
\end{rem}

Next we provide our perspective on a main implication of the work
in \cite{Dier_Zacher2017} for homogeneous initial values.
\begin{thm}[{\cite[Corollary 1.1]{Dier_Zacher2017}}]
\label{thm:Zacher}Let $\Omega\subseteq\mathbb{R}^{n}$ be open,
$N\colon\mathbb{R}\to L(L_{2}(\Omega)^{n})$ strongly measurable and
bounded, satisfying \prettyref{eq:Zacher}. Furthermore assume that
there exists $c>0$ such that for a.e. $t\in\mathbb{R}$: 
\[
\Re\langle\xi,N(t)\xi\rangle_{L_{2}(\Omega)^{n}}\geq c\|\xi\|_{L_{2}(\Omega)^{n}}^{2}.
\]

Let $C\colon\dom(C)\subseteq L_{2}(\Omega)\to L_{2}(\Omega)^{n}$
be densely defined and closed such that $\grad_{0}\subseteq C\subseteq\grad,$
$\rho>0$ and let $f\in L_{2,\rho}(\mathbb{R};L_{2}(\Omega)).$ Then
the (unique) solution $u\in L_{2,\rho}(\mathbb{R};L_{2}(\Omega))$
of
\[
\partial_{t,\rho}u+C^{*}\mathcal{N}_{\rho}Cu=f
\]
admits maximal regularity, that is, 
\[
u\in H_{\rho}^{1}(\mathbb{R};L_{2}(\Omega))\cap H_{\rho}^{1/2}(\mathbb{R};\dom(C))\cap L_{2,\rho}(\mathbb{R};\dom(C^{*}\mathcal{N}_{\rho}C)).
\]
Moreover, the solution mapping $f\mapsto u$ is continuous as an operator
from $L_{2,\rho}(\mathbb{R};L_{2}(\Omega))$ into $H_{\rho}^{1}(\mathbb{R};L_{2}(\Omega))\cap H_{\rho}^{1/2}(\mathbb{R};\dom(C))\cap L_{2,\rho}(\mathbb{R};\dom(C^{*}\mathcal{N}_{\rho}C)).$
\end{thm}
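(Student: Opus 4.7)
I would mirror the proof strategy of \prettyref{thm:maxregEgert} almost verbatim, with the sole replacement of the commutator input: instead of \prettyref{thm:egertInt} combined with \prettyref{prop:commutator_0}, I will feed \prettyref{prop:Zacher} into \prettyref{thm:max_reg_commuator}. First, following \cite[Chapter 6]{seifert2020evolutionary}, I introduce the flux variable $q \coloneqq -\mathcal{N}_\rho C u$ and rewrite the scalar heat-type equation as the evolutionary system
\[
\left(\partial_{t,\rho}\begin{pmatrix} 1 & 0 \\ 0 & 0 \end{pmatrix} + \begin{pmatrix} 0 & 0 \\ 0 & \mathcal{N}_\rho^{-1} \end{pmatrix} + \begin{pmatrix} 0 & -C^* \\ C & 0 \end{pmatrix}\right)\begin{pmatrix} u \\ q \end{pmatrix} = \begin{pmatrix} f \\ 0 \end{pmatrix}.
\]
The coercivity of $N$ yields $\Re\langle N(t)\xi,\xi\rangle \geq c\|\xi\|^2$, from which Lax--Milgram (applied pointwise) gives that $N(t)^{-1}$ exists with $\|N(t)^{-1}\|\leq c^{-1}$, and the lower bound $\Re\langle N(t)^{-1}w,w\rangle\geq \frac{c}{\|N\|_\infty^2}\|w\|^2$ holds pointwise and hence for the associated multiplier $\mathcal{N}_\rho^{-1}$. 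This supplies the $L_2$-coercivity \prettyref{eq:pd0} as well as the strict positivity \prettyref{eq:M00pd} (trivially, since $\mathcal{M}_{00}=1$) required by \prettyref{thm:max_reg_commuator}; moreover $\partial_{t,\rho}^{-1}\mathcal{M}=\mathcal{M}\partial_{t,\rho}^{-1}$ holds trivially because $\mathcal{M}_{00}=1$ (so $\mathcal{M}'=0$).

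The main technical step is to verify the commutator hypothesis of \prettyref{thm:max_reg_commuator} for the off-diagonal block $\mathcal{N}_\rho^{-1}$. To this end I check that the pointwise inverse $\widetilde{N}(t)\coloneqq N(t)^{-1}$ satisfies the Dier--Zacher Slobodetski\u{\i}-type condition \prettyref{eq:Zacher}. Indeed, using the resolvent identity,
\[
\|\widetilde{N}(t)-\widetilde{N}(s)\|_{L(L_2(\Omega)^n)} = \|N(t)^{-1}\bigl(N(s)-N(t)\bigr)N(s)^{-1}\| \leq c^{-2}\|N(t)-N(s)\|,
\]
so
\[
\int_\R\int_\R \frac{\|\widetilde{N}(t)-\widetilde{N}(s)\|^2}{|t-s|^{2+\delta}}\,\d t\,\d s \leq c^{-4}\,C < \infty,
\]
exactly as observed in \prettyref{rem:NinveEgert} for the matrix-valued case. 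Hence \prettyref{prop:Zacher} applies to $\mathcal{N}_\rho^{-1}$ and yields, for every $\varepsilon>0$, a constant $c_\varepsilon>0$ with
\[
\|[\partial_{t,\rho}^{1/2},\mathcal{N}_\rho^{-1}]\phi\|_{\rho,0}\leq \varepsilon\|\phi\|_{\rho,1/2}+c_\varepsilon\|\phi\|_{\rho,0}\quad(\phi\in H_\rho^{1/2}(\R;L_2(\Omega)^n)).
\]

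Choosing $\varepsilon$ strictly smaller than the positive definiteness constant $c/\|N\|_\infty^2$ secured in the first paragraph, the assumptions of \prettyref{thm:max_reg_commuator} are met, and its conclusion delivers exactly the claimed regularity $u\in H_\rho^1(\R;L_2(\Omega))\cap H_\rho^{1/2}(\R;\dom(C))$ together with $q\in L_{2,\rho}(\R;\dom(C^*))\cap H_\rho^{1/2}(\R;L_2(\Omega)^n)$, along with the continuity of the solution map. Reading off $q=-\mathcal{N}_\rho Cu$ in the second line then translates $q\in\dom(C^*)$ into $u\in\dom(C^*\mathcal{N}_\rho C)$, finishing the proof. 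The only conceptually nontrivial points are the passage from $N$ to $N^{-1}$ in \prettyref{eq:Zacher} (handled above) and the careful bookkeeping of the coercivity constant so that the $\varepsilon$ in the commutator estimate can be absorbed; everything else is a direct translation of the Egert proof.
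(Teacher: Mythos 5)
Your proposal is correct and follows essentially the same route as the paper: both pass to the first-order system in $(u,q)$ with $q=-\mathcal{N}_\rho Cu$, transfer the Slobodetski\u{\i} condition \prettyref{eq:Zacher} from $N$ to $N^{-1}$ via the resolvent identity (as in \prettyref{rem:NinveEgert}), and then feed \prettyref{prop:Zacher} into \prettyref{thm:max_reg_commuator}. The only difference is that you spell out the Lax--Milgram bookkeeping for the coercivity constant, which the paper leaves implicit.
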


\begin{proof}
Reformulating the equation in the variables $u$ and $q=-\mathcal{N}_{\rho}Cu,$
we obtain
\[
\left(\partial_{t,\rho}\left(\begin{array}{cc}
1 & 0\\
0 & 0
\end{array}\right)+\left(\begin{array}{cc}
0 & 0\\
0 & \mathcal{N}_{\rho}^{-1}
\end{array}\right)+\left(\begin{array}{cc}
0 & -C^{*}\\
C & 0
\end{array}\right)\right)\left(\begin{array}{c}
u\\
q
\end{array}\right)=\left(\begin{array}{c}
f\\
0
\end{array}\right).
\]
With the same argument as in \prettyref{rem:NinveEgert}, we infer
that $\tilde{N}\colon t\mapsto N(t)^{-1}$ satisfies \prettyref{eq:Zacher}.
Thus, with the help of \prettyref{prop:Zacher}, \prettyref{thm:max_reg_commuator}
is applicable, which yields the assertion. 
\end{proof}
\begin{rem}
(a) Even though the conditions \prettyref{eq:Zacher} and \prettyref{eq:egert}
do not compare (see \cite[Introduction]{Auscher_Egert2016}), we have
established that both of the results in \cite{Auscher_Egert2016}
and \cite{Dier_Zacher2017} applied to standard divergence form equations
can be obtained by the same overriding principle of suitably bounded
commutators with $\partial_{t,\rho}^{1/2}$. Note that \prettyref{eq:egert}
implies boundedness as an operator in $L_{2}$, whereas \prettyref{eq:Zacher}
yields infinitesimal boundedness relative to $\partial_{t,\rho}^{1/2}$
only.

(b) The condition on the regularity of the coefficient $N$ leading
to maximal regularity of the considered divergence form equation obtained
in \cite{Achache2019} seems to be weaker than the one of (infinitesimal)
boundedness of the commutator with $\partial_{t,\rho}^{1/2}$. However,
note that in order to apply the maximal regularity theorem in \cite{Achache2019},
one needs to assume Kato's square root property (potentially) resulting
in undue regularity requirements of the boundary of $\Omega$, which
we do not want to impose here.
\end{rem}

The above results (and the corresponding proofs) provide potential
for the following maximal regularity result, which invokes both lower
order terms and (time-)nonlocal effects in the time derivative term.
This will be addressed next.

\subsection{Maximal regularity for integro-differential equations\label{subsec:Maximal-regularity-integro}}

In this section, we consider equations of the following form (see
\cite{Trostorff2015} or \cite{Pruess1993}), which has applications
for instance in visco-elasticity. We need the following notion.
\begin{defn}
Let $G$ be a Hilbert space, $T\in L_{1,\mu}(\mathbb{R}_{\geq0};L(G))$
for some $\mu\geq0.$ We call $T$  \emph{admissible}, if the following
conditions are met:\begin{enumerate}

\item for all $t\geq0$, $T(t)$ is selfadjoint,

\item there exists $d\ge0$ and $\rho_{0}\geq\text{\ensuremath{\mu}}$
such that for all $t\in\mathbb{R}$ we have
\[
t\Im\hat{T}(t-\i\rho_{0})\leq d,
\]
where 
\[
\langle\hat{T}(t-\i\rho)\phi,\psi\rangle_{G}\coloneqq\frac{1}{\sqrt{2\pi}}\int\e^{-\i ts}\e^{-\rho s}\langle T(s)\phi,\psi\rangle_{G}\d s\quad(\phi,\psi\in G).
\]

\end{enumerate}
\end{defn}

\begin{rem}
As highlighted in \cite[Remark 3.6]{Trostorff2015}; $T$ being admissible
generalises the standard assumption for convolution kernels for the
class of integro-differential equations considered in the literature.
\end{rem}

\begin{prop}[{\cite[Proposition 3.9 (and its proof)]{Trostorff2015}}]
\label{prop:integroPD} Let $G$ be a Hilbert space, $T\in L_{1,\mu}(\mathbb{R}_{\geq0};L(G))$
for some $\mu\geq0.$ Assume that $T$ is admissible. Then there exists
$c_{1},c_{2}>0$, $\rho_{1}\geq\rho_{0}$ such that for all $\rho\geq\rho_{1}$
we have
\[
\Re\langle\partial_{t,\rho}(1+T*)\phi,\phi\rangle_{\rho,0}\geq\left(c_{1}\rho-c_{2}\right)\langle\phi,\phi\rangle_{\rho,0}\quad(\phi\in H_{\rho}^{1}(\mathbb{R};G)),
\]
where $T*\in L(L_{2,\rho}(\mathbb{R};G))$ is defined as the operator
of convolving with $T$ (extended by zero to $\mathbb{R}$).
\end{prop}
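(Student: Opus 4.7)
The plan is to apply the Fourier--Laplace transformation $\mathcal{L}_{\rho}$ to diagonalise both $\partial_{t,\rho}$ and the convolution operator $T\ast$ simultaneously, reducing the asserted bilinear lower bound to a pointwise lower bound on the $L(G)$-valued symbol of $\partial_{t,\rho}(1+T\ast)$, and then to use the admissibility of $T$ to control the sign-indefinite part of that symbol.

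First, I would observe that on $H_{\rho}^{1}(\R;G)$ the convolution $T\ast$ commutes with $\partial_{t,\rho}$ and, under $\mathcal{L}_{\rho}$, corresponds to multiplication by the $L(G)$-valued function $t\mapsto\sqrt{2\pi}\,\hat{T}(t-\i\rho)$. Writing $u\coloneqq\mathcal{L}_{\rho}\phi\in L_{2}(\R;G)$, Parseval's identity rewrites
\[
\Re\langle\partial_{t,\rho}(1+T\ast)\phi,\phi\rangle_{\rho,0}=\int_{\R}\Re\langle(\i t+\rho)(I+\sqrt{2\pi}\,\hat{T}(t-\i\rho))u(t),u(t)\rangle_{G}\,dt,
\]
with $\|u\|_{L_{2}(\R;G)}=\|\phi\|_{\rho,0}$. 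Thus the assertion reduces to the pointwise operator inequality in $L(G)$,
\[
\rho\,I+\rho\sqrt{2\pi}\,\Re\hat{T}(t-\i\rho)-t\sqrt{2\pi}\,\Im\hat{T}(t-\i\rho)\geq(c_{1}\rho-c_{2})\,I,\qquad t\in\R,\ \rho\geq\rho_{1}.
\]

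Second, the first two summands are handled using only integrability of $T$: since $\|\sqrt{2\pi}\,\hat{T}(t-\i\rho)\|_{L(G)}\leq K(\rho)\coloneqq\int_{0}^{\infty}\|T(s)\|_{L(G)}\,\e^{-\rho s}\,ds$, and $K(\rho)\to0$ as $\rho\to\infty$ by dominated convergence, we obtain $\rho\,I+\rho\sqrt{2\pi}\,\Re\hat{T}(t-\i\rho)\geq\rho(1-K(\rho))\,I\geq\tfrac{1}{2}\rho\,I$ for $\rho$ sufficiently large. Selfadjointness of each $T(s)$ makes $\Im\hat{T}(t-\i\rho)$ a selfadjoint element of $L(G)$, so that the third summand is intrinsically comparable to its operator upper bound.

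Third---and this is the key step---the admissibility hypothesis yields $t\Im\hat{T}(t-\i\rho_{0})\leq d\,I$ \emph{only at the single level} $\rho_{0}$, but what is needed is a uniform upper bound valid at all levels $\rho\geq\rho_{1}$. To propagate the admissibility bound, introduce the $L(G)$-valued holomorphic function $F(z)\coloneqq z\hat{T}(z)$ on $\{\Im z<-\mu\}$ and use the decomposition
\[
t\Im\hat{T}(t-\i\rho)=\Im F(t-\i\rho)+\rho\,\Re\hat{T}(t-\i\rho),
\]
so that the second summand is controlled by $\rho K(\rho)$ and can be absorbed into the main positive term from the previous step, while the boundary estimate $\Im F(t-\i\rho_{0})\leq\tilde d\,I$ (an immediate consequence of admissibility together with $\|\Re\hat{T}(t-\i\rho_{0})\|\leq K(\rho_{0})$) is to be transported to the interior of the half-plane $\{\Im z\leq-\rho_{0}\}$ by a Phragm\'en--Lindel\"of/Poisson-integral argument applied to the scalar holomorphic functions $z\mapsto\langle F(z)x,x\rangle$ for $x\in G$ with $\|x\|=1$, using the at-most linear growth $|F(t-\i\rho)|\leq(|t|+\rho)K(\rho)/\sqrt{2\pi}$. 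This yields $\Im F(t-\i\rho)\leq D\,I$ uniformly in $t\in\R$ and $\rho\geq\rho_{0}$, hence $-t\sqrt{2\pi}\,\Im\hat{T}(t-\i\rho)\geq -c_{2}I$ for a suitable constant. Combining all three estimates gives the required symbol inequality with, e.g., $c_{1}=\tfrac{1}{4}$ and $c_{2}$ depending only on $d,\mu,\rho_{0}$ and $\|T\|_{L_{1,\mu}(\R_{\geq0};L(G))}$, which is the content of the proposition. The main obstacle is precisely this propagation of the admissibility bound from a single horizontal line $\Im z=-\rho_{0}$ to the deeper half-plane: the imaginary-part condition is not implied by $L_{1,\mu}$-integrability alone, and the unboundedness of $F$ necessitates a carefully tuned maximum-principle argument.
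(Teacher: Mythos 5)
The paper itself does not prove this proposition---it is cited from \cite[Proposition 3.9]{Trostorff2015}---so your proposal must be judged on its own merits. Your Fourier--Laplace reduction to the operator-valued symbol inequality is correct, and $F(z)=z\hat T(z)$ is exactly the right object to isolate. You also overlook a clean simplification: since $\i t+\rho=\i z$ for $z=t-\i\rho$, the factor $(\i t+\rho)\sqrt{2\pi}\hat T(t-\i\rho)$ equals $\i\sqrt{2\pi}F(z)$, whose contribution to $\Re\langle\cdot\,u,u\rangle_{G}$ is $-\sqrt{2\pi}\langle\Im F(z)u,u\rangle_{G}$. Thus the Hermitian part of the symbol is \emph{exactly} $\rho-\sqrt{2\pi}\Im F(t-\i\rho)$; the $\Re\hat T$ contributions you estimate in your second step cancel identically against those introduced by your decomposition $t\Im\hat T=\Im F+\rho\Re\hat T$, and the entire proposition reduces to the single bound $\Im F(t-\i\rho)\leq D$ for $\rho\geq\rho_{1}$, uniformly in $t$.

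The genuine gap is in the Phragm\'en--Lindel\"of step, which you correctly flag as the crux but do not carry out, and which your stated growth estimate cannot support. The bound $\|F(t-\i\rho)\|\leq(|t|+\rho)K(\rho)/\sqrt{2\pi}$ is only $O(|z|)$ uniformly over the closed half-plane $\{\Im z\leq-\rho_{0}\}$: on the semicircle $|z|=R$, points with $\rho\approx\rho_{0}$ have $K(\rho)\approx K(\rho_{0})$, a fixed constant, so the bound there is $\Theta(R)$, not $o(R)$. Linear growth does not permit any Phragm\'en--Lindel\"of conclusion on a half-plane: the function $F_{0}(z)=-z$ satisfies $\Im F_{0}(t-\i\rho)=\rho$, which is bounded by $\rho_{0}$ on the boundary line and of linear growth, yet unbounded above in the interior. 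To proceed one needs $o(|z|)$ growth, and this \emph{is} available, but it requires the additional Riemann--Lebesgue-type fact that $\hat T(z)\to0$ uniformly as $|z|\to\infty$ in the closed half-plane: approximate $e^{-\rho_{0}\cdot}T$ in $L_{1}$ by a simple function supported in a compact interval $[\delta,M]\subset(0,\infty)$, whose Laplace transform decays like $1/|z|$ uniformly, and combine this with the dominated-convergence observation $K(\rho)\to0$ for the depth direction. Even after this upgrade, the result you need is the harmonic, not the holomorphic, maximum principle---you are bounding $\Im F$ from above, while $\Re F$ (hence $|F|$) is unbounded even on the boundary line---which for $o(|z|)$ growth follows because the harmonic measure of the semicircular arc of $\{|z|<R\}\cap\{\Im z<-\rho_{0}\}$ seen from a fixed interior point is $O(1/R)$. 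As written, without the $o(|z|)$ upgrade, your maximum-principle step does not close.
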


The corresponding theorem for maximal regularity of parabolic-type
non-autonomous integro-differential equations, now reads as follows.
\begin{thm}
Let $H_{0},H_{1}$ be Hilbert spaces, $\mu\geq0$, $T\in L_{1,\mu}(\mathbb{R};L(H_{0}))$
admissible. Assume $\mathcal{N}_{ij}\colon S_{c}(\mathbb{R};H_{j})\to\bigcap_{\rho\geq\rho_{0}}L_{2,\rho}(\mathbb{R};H_{i})$
is evolutionary at $\rho_{0}$ for each pair $(i,j)\in\{(0,0),(0,1),(1,1)\}$
and some $\rho_{0}$ satisfying 
\[
\Re\langle\mathcal{N}_{11}\phi_{1},\phi_{1}\rangle_{\rho,0}\geq c\langle\phi_{1},\phi_{1}\rangle_{\rho,0}\quad(\phi_{1}\in S_{c}(\mathbb{R};H_{1}),\rho\geq\rho_{0})
\]
for some $c>0$. In addition, assume that for all $\rho\geq\rho_{0}$
and all $\varepsilon>0$ there exists $d>0$ such that 
\[
\|\left[\mathcal{N}_{11,\rho},\partial_{t,\rho}^{1/2}\right]\phi_{1}\|_{\rho,0}\leq\varepsilon\|\phi_{1}\|_{\rho,\frac{1}{2}}+d\|\phi_{1}\|_{\rho,0}\quad(\phi_{1}\in H_{\rho}^{1/2}(\mathbb{R};H_{1})).
\]
Furthermore, let $C\colon\dom(C)\subseteq H_{0}\to H_{1}$ be densely
defined and closed. Then we find $\rho_{1}\geq\rho_{0}$ such that
for all $\rho\geq\rho_{1}$ and $f\in L_{2,\rho}(\mathbb{R};H_{0})$
there exists a unique $u\in L_{2,\rho}(\mathbb{R};H_{0})$ with 
\[
\partial_{t,\rho}(1+T*)u+\mathcal{N}_{00,\rho}u+\mathcal{N}_{01,\rho}\mathcal{N}_{11,\rho}^{-1}Cu+C^{*}\mathcal{N}_{11,\rho}^{-1}Cu=f.
\]
Moreover, $u$ satisfies the regularity 
\[
u\in H_{\rho}^{1}(\mathbb{R};H_{0})\cap H_{\rho}^{1/2}(\mathbb{R};\dom(C))\cap L_{2,\rho}(\mathbb{R};\dom(C^{*}\mathcal{N}_{11,\rho}^{-1}C)).
\]
\end{thm}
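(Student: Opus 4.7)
The plan is to recast the scalar integro-differential equation as a first-order evolutionary system fitting the hypotheses of \prettyref{thm:max_reg_commuator}. Introducing the flux variable $v\coloneqq-\mathcal{N}_{11,\rho}^{-1}Cu$, the equation is equivalent to
\[
\left(\partial_{t,\rho}\begin{pmatrix}1+T\ast & 0\\ 0 & 0\end{pmatrix}+\begin{pmatrix}\mathcal{N}_{00,\rho} & -\mathcal{N}_{01,\rho}\\ 0 & \mathcal{N}_{11,\rho}\end{pmatrix}+\begin{pmatrix}0 & -C^{\ast}\\ C & 0\end{pmatrix}\right)\begin{pmatrix}u\\ v\end{pmatrix}=\begin{pmatrix}f\\ 0\end{pmatrix}.
\]
The second row precisely encodes $v=-\mathcal{N}_{11,\rho}^{-1}Cu$; substituting into the first row reproduces the original scalar equation, using $-C^{\ast}v=C^{\ast}\mathcal{N}_{11,\rho}^{-1}Cu$ and $-\mathcal{N}_{01,\rho}v=\mathcal{N}_{01,\rho}\mathcal{N}_{11,\rho}^{-1}Cu$. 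Hence the matrix data have exactly the block form of \prettyref{eq:A is C star C}--\prettyref{eq:N is Matrix} with $\mathcal{M}_{00}=1+T\ast$, and unique $L_{2,\rho}(\R;H)$-solvability for $\rho\geq\rho_1$ will be a by-product of the verifications below.

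I then verify the structural and positivity hypotheses of \prettyref{thm:max_reg_commuator}. The convolution $1+T\ast$ commutes with $\partial_{t,\rho}^{-1}$ (also a convolution), so $\mathcal{M}\partial_{t,\rho}^{-1}=\partial_{t,\rho}^{-1}\mathcal{M}$ and $\mathcal{M}'=0$. Young's inequality gives $\|T\ast\|_{L(L_{2,\rho}(\R;H_0))}\leq\|T\|_{L_{1,\rho}(\R_{\geq0};L(H_0))}\to 0$ as $\rho\to\infty$, whence $\Re\langle\mathcal{M}_{00}\phi_{0},\phi_{0}\rangle_{\rho,0}\geq\tfrac{1}{2}\|\phi_{0}\|_{\rho,0}^{2}$ eventually. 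Positive definiteness of $\partial_{t,\rho}\mathcal{M}+\mathcal{N}$ in $L_{2,\rho}(\R;H)$ is obtained by combining \prettyref{prop:integroPD}, which yields $\Re\langle\partial_{t,\rho}(1+T\ast)\phi_{0},\phi_{0}\rangle_{\rho,0}\geq(c_{1}\rho-c_{2})\|\phi_{0}\|_{\rho,0}^{2}$, with the standing positivity of $\mathcal{N}_{11,\rho}$ and the uniform evolutionary bounds on $\mathcal{N}_{00,\rho},\mathcal{N}_{01,\rho}$; a Young-type absorption drowns the $\mathcal{N}_{00,\rho}\phi_0$ and the off-diagonal $-\mathcal{N}_{01,\rho}\phi_1$ contributions into the dominant $c_{1}\rho\|\phi_{0}\|_{\rho,0}^{2}$-term once $\rho_{1}$ is large enough.

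The remaining task is the commutator hypothesis. Because $\mathcal{N}_{10}=0$, the matrix commutator $[\partial_{t,\rho}^{1/2},\mathcal{N}]$ has nonzero entries only at positions $(0,0)$, $(0,1)$, $(1,1)$, and at $(1,1)$ the standing infinitesimal bound with arbitrary prefactor $\varepsilon$ is exactly what \prettyref{thm:max_reg_commuator} requires. To handle the two remaining entries, which are \emph{not} controlled by hypothesis, my plan is a perturbation argument: first apply \prettyref{thm:max_reg_commuator} to the skeleton system obtained by replacing $(\mathcal{N}_{00,\rho},\mathcal{N}_{01,\rho})$ by $(0,0)$, where now the only commutator visible is the $(1,1)$-one, to get a bounded solution operator $\mathcal{S}^{\mathrm{skel}}_{\rho}$ into the target regularity space
\[
\mathcal{R}_\rho\coloneqq\bigl(H_\rho^{1}(\R;H_{0})\cap H_\rho^{1/2}(\R;\dom(C))\bigr)\times\bigl(L_{2,\rho}(\R;\dom(C^{\ast}))\cap H_\rho^{1/2}(\R;H_{1})\bigr).
\]
The original equation then reads as the fixed-point problem
\[
\binom{u}{v}=\mathcal{S}^{\mathrm{skel}}_{\rho}\binom{f-\mathcal{N}_{00,\rho}u+\mathcal{N}_{01,\rho}v}{0}
\]
in $\mathcal{R}_\rho$, which closes by a Neumann series once $\rho_{1}$ is sufficiently large because $\|\mathcal{S}^{\mathrm{skel}}_{\rho}\|_{L(L_{2,\rho},\mathcal{R}_\rho)}$ decays like $1/\sqrt{\rho}$, inherited from the $c_{1}\rho$-growth of the parabolic constant propagated through the a-priori estimate in the proof of \prettyref{thm:main}. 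Unwinding $v=-\mathcal{N}_{11,\rho}^{-1}Cu$ then delivers the stated regularity for $u$, including $u\in L_{2,\rho}(\R;\dom(C^{\ast}\mathcal{N}_{11,\rho}^{-1}C))$. The main obstacle is precisely making this contraction close inside $\mathcal{R}_\rho$ rather than only in $L_{2,\rho}$; this reduces to a careful bookkeeping of how the lower-order multiplication by $\mathcal{N}_{00,\rho}$ and $\mathcal{N}_{01,\rho}$ interacts with the $H_\rho^{1/2}$-components of the range, where the decay in $\rho$ of the relevant composed norms is the decisive smallness.
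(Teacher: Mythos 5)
Your reduction to the first-order block system and the Young-type absorption argument yielding $L_{2,\rho}$-well-posedness for large $\rho$ coincide with the paper's proof. The divergence — and a genuine gap — is in how you extract the $H_\rho^{1/2}$-regularity from the lower-order terms $\mathcal{N}_{00,\rho},\mathcal{N}_{01,\rho}$.

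Your Neumann-series/fixed-point argument in $\mathcal{R}_\rho$ hinges on the claim that $\|\mathcal{S}^{\mathrm{skel}}_{\rho}\|_{L(L_{2,\rho}\times H_\rho^{1/2},\,\mathcal{R}_\rho)}$ decays like $1/\sqrt{\rho}$. This is unproven and, as stated, appears false. The source of the asserted decay is the term $c_1\rho-c_2$ from \prettyref{prop:integroPD}, but this controls only the $\phi_0$-component; the positive-definiteness constant of the full skeleton bilinear form is $\min\{c_1\rho-c_2,\,c_{\mathcal{N}_{11}}\}$, which saturates at the $\rho$-independent lower bound on $\mathcal{N}_{11,\rho}$ once $\rho$ is large. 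The $a\,priori$ estimate in the proof of \prettyref{thm:main} inherits this constant (together with the uniform evolutionary bounds on $\mathcal{N}_{11,\rho}$, $\mathcal{N}_{11,\rho}^{-1}$ and, via interpolation, on $\mathcal{M}$), so it yields a $\rho$-\emph{uniform} bound, not a decaying one, on the skeleton solution operator in the higher-order norm. Consequently the composed iteration map $(u,v)\mapsto\mathcal{S}^{\mathrm{skel}}_\rho(f-\mathcal{N}_{00,\rho}u+\mathcal{N}_{01,\rho}v,0)$ is not shown to be a contraction on $\mathcal{R}_\rho$ for any $\rho$, and the Neumann series does not close.

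The paper sidesteps the whole issue. Having obtained the unique $(u,q)\in L_{2,\rho}(\R;H_0\oplus H_1)$ from \prettyref{thm:sol_theory_basic}, one simply moves the bounded lower-order terms to the right-hand side,
\[
\overline{\Bigl(\partial_{t,\rho}\mathcal{M}_{\mathrm{skel}}+\mathcal{N}_{\mathrm{skel}}+A\Bigr)}\binom{u}{q}=\binom{f-\mathcal{N}_{00,\rho}u+\mathcal{N}_{01,\rho}q}{0},
\]
whose right-hand side already lies in $L_{2,\rho}(\R;H_0)\times H_\rho^{1/2}(\R;H_1)$ merely because $u,q\in L_{2,\rho}$ and $\mathcal{N}_{00,\rho},\mathcal{N}_{01,\rho}$ are bounded. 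Now \prettyref{thm:max_reg_commuator} applied to the \emph{skeleton} system (for which the only nontrivial commutator is $[\partial_{t,\rho}^{1/2},\mathcal{N}_{11,\rho}]$) gives $(u,q)\in\mathcal{R}_\rho$ directly; no smallness, no iteration. This is the step you should replace your contraction argument with. The remaining verifications — $\mathcal{M}'=0$ because convolutions commute, the strict positivity of $1+T*$ for large $\rho$ via $\|T*\|\leq\|T\|_{L_{1,\rho}}\to0$, and the validity of \prettyref{eq:pd0} from \prettyref{prop:integroPD} with Young absorption — you carry out exactly as the paper does.
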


\begin{proof}
Using the substitution $q\coloneqq-\mathcal{N}_{11,\rho}^{-1}Cu$,
we consider for $\rho\geq\rho_{1}$ for $\rho_{1}\geq\rho_{0}$ to
be fixed later
\begin{equation}
\left(\partial_{t,\rho}\left(\begin{array}{cc}
(1+T*) & 0\\
0 & 0
\end{array}\right)+\left(\begin{array}{cc}
\mathcal{N}_{00,\rho} & -\mathcal{N}_{01,\rho}\\
0 & \mathcal{N}_{11,\rho}
\end{array}\right)+\left(\begin{array}{cc}
0 & -C^{*}\\
C & 0
\end{array}\right)\right)\left(\begin{array}{c}
u\\
q
\end{array}\right)=\left(\begin{array}{c}
f\\
0
\end{array}\right).\label{eq:integrosyst}
\end{equation}
For $\mathcal{M}=\left(\begin{array}{cc}
(1+T*) & 0\\
0 & 0
\end{array}\right)$ and $\mathcal{N}=\left(\begin{array}{cc}
\mathcal{N}_{00,\rho} & -\mathcal{N}_{01,\rho}\\
0 & \mathcal{N}_{11,\rho}
\end{array}\right)$, we show that the positive definiteness condition in \prettyref{thm:sol_theory_basic}
is satisfied. For this, we use \prettyref{prop:integroPD} and estimate
for $\phi=(\phi_{0},\phi_{1})\in L_{2,\rho}(\mathbb{R};H_{0}\times H_{1})$
and $\varepsilon>0$
\begin{align*}
 & \Re\langle\left(\partial_{t,\rho}\mathcal{M}+\mathcal{N}\right)\phi,\phi\rangle_{\rho,0}\\
 & \geq\left(c_{1}\rho-c_{2}\right)\langle\phi_{0},\phi_{0}\rangle_{\rho,0}-\|\mathcal{N}_{00,\rho}\|_{L(L_{2,\rho}(\mathbb{R};H_{0}))}\|\phi_{0}\|_{\rho,0}^{2}\\
 & \quad-\|\mathcal{N}_{01,\rho}\|_{L(L_{2,\rho}(\mathbb{R};H_{0}))}\|\phi_{0}\|_{\rho,0}\|\phi_{1}\|_{\rho,0}+c\|\phi_{1}\|_{\rho,0}^{2}\\
 & \geq\left(c_{1}\rho-c_{2}-\|\mathcal{N}_{00,\rho}\|_{L(L_{2,\rho}(\mathbb{R};H_{0}))}-\frac{1}{2\varepsilon}\|\mathcal{N}_{01,\rho}\|_{L(L_{2,\rho}(\mathbb{R};H_{0}))}^{2}\right)\|\phi_{0}\|_{\rho,0}^{2}+(c-\frac{1}{2}\varepsilon)\|\phi_{1}\|_{\rho,0}^{2}.
\end{align*}
Thus, choosing $\varepsilon>0$ small enough, we find $\rho_{1}\geq\rho_{0}$
such that for all $\rho\geq\rho_{1}$ we have
\[
\Re\langle\left(\partial_{t,\rho}\mathcal{M}+\mathcal{N}\right)\phi,\phi\rangle_{\rho,0}\geq\frac{c}{2}\|\phi\|_{\rho,0}^{2}.
\]
Hence, $(u,q)\in L_{2,\rho}(\mathbb{R};H_{0}\times H_{1})$ are uniquely
determined by \prettyref{eq:integrosyst}. Note that \prettyref{thm:sol_theory_basic}
asserts that actually 
\[
\overline{\left(\partial_{t,\rho}\left(\begin{array}{cc}
(1+T*) & 0\\
0 & 0
\end{array}\right)+\left(\begin{array}{cc}
\mathcal{N}_{00,\rho} & -\mathcal{N}_{01,\rho}\\
0 & \mathcal{N}_{11,\rho}
\end{array}\right)+\left(\begin{array}{cc}
0 & -C^{*}\\
C & 0
\end{array}\right)\right)}\left(\begin{array}{c}
u\\
q
\end{array}\right)=\left(\begin{array}{c}
f\\
0
\end{array}\right).
\]
Since both $\mathcal{N}_{00,\rho}$ and $\mathcal{N}_{01,\rho}$ are
bounded linear operators, we obtain
\begin{align*}
 & \overline{\left(\partial_{t,\rho}\left(\begin{array}{cc}
(1+T*) & 0\\
0 & 0
\end{array}\right)+\left(\begin{array}{cc}
0 & 0\\
0 & \mathcal{N}_{11,\rho}
\end{array}\right)+\left(\begin{array}{cc}
0 & -C^{*}\\
C & 0
\end{array}\right)\right)}\left(\begin{array}{c}
u\\
q
\end{array}\right)\\
 & =\left(\begin{array}{c}
f\\
0
\end{array}\right)+\left(\begin{array}{cc}
-\mathcal{N}_{00,\rho} & \mathcal{N}_{01,\rho}\\
0 & 0
\end{array}\right)\left(\begin{array}{c}
u\\
q
\end{array}\right)\\
 & =\left(\begin{array}{c}
f-\mathcal{N}_{00,\rho}u+\mathcal{N}_{01,\rho}q\\
0
\end{array}\right).
\end{align*}
Next, as the convolution operator $(1+T*)$ commutes with $\partial_{t,0}^{-1}$,
we infer with the help of \prettyref{thm:max_reg_commuator} the desired
regularity statement.
\end{proof}
\begin{rem}
(a) Note that the coefficients of the lower order terms $\mathcal{N}_{01}$
and $\mathcal{N}_{00}$ are not required to satisfy any regularity
in time, which is in line with the concluding example in \cite{Achache2019}.
Moreover, in the theorem presented here the coefficient $\mathcal{N}_{11,\rho}$
may well depend suitably regular on time, i.e., $\mathcal{N}_{11,\rho}$
may be induced by a multiplication operator, which satisfies either
\prettyref{eq:egert} or \prettyref{eq:Zacher}. 

(b) The results above directly apply to systems of divergence form
equations, see \cite[Parabolic systems]{Dier_Zacher2017} for examples
concerning maximal regularity and \cite[Proposition 3.8]{CW17_FH}
for the corresponding formulation as evolutionary equation.
\end{rem}

\subsection{Maxwell's equations\label{subsec:Maxwell's-equations}}

The concluding example is concerned with Maxwell's equations. For
this, we introduce the necessary operator from vector analysis:
\begin{defn}
Let $\Omega\subseteq\mathbb{R}^{3}$ open. Then we define
\begin{align*}
\curl_{(0)}\colon H_{(0)}(\curl,\Omega)\subseteq L_{2}(\Omega)^{3} & \to L_{2}(\Omega)^{3},\\
\phi & \mapsto\nabla\times\phi,
\end{align*}
where $H(\curl,\Omega)$ is the space of $L_{2}(\Omega)$-vector fields
with distributional $\curl$ in $L_{2}(\Omega)^{3}$ and $H_{0}(\curl,\Omega)$
is the closure of $C_{c}^{\infty}(\Omega)^{3}$ in $H(\curl,\Omega)$.
It is not difficult to see that $\curl_{0}^{*}=\curl.$ 

The result on maximal regularity for Maxwell's equations is concerned
with the eddy current approximation, which is a parabolic variant
of the originial Maxwell's equations. The catch is that in electrically
conducting materials like metals the dielectricity $\varepsilon$
is negligible compared to the conductivity $\sigma$, which we assume
to depend on time. This setting has applications to moving domains,
see e.g.~\cite{Cooper1985}. The result reads as follows.
\end{defn}

\begin{thm}
\label{thm:Maxwellmax}Let $\Omega\subseteq\mathbb{R}^{3}$ open,
$\rho>0$ and let $\mu=\mu^{*}\in L(L_{2}(\Omega)^{3})$; assume $\mu\geq c$
for some $c>0$ . Moreover, let $\sigma\in L(L_{2,\rho}(\mathbb{R};L_{2}(\Omega)^{3})$
satisfy
\[
\Re\langle\sigma E,E\rangle_{\rho,0}\geq c\|E\|_{\rho,0}^{2}\quad(E\in L_{2,\rho}(\mathbb{R};L_{2}(\Omega)^{3}).
\]
and for all $\varepsilon>0$ we find $d>0$ such that 
\[
\|\left[\sigma,\partial_{t,\rho}^{1/2}\right]\phi\|\leq\varepsilon\|\phi\|_{\rho,\frac{1}{2}}+d\|\phi\|_{\rho,0}\quad(\phi\in H_{\rho}^{1/2}(\mathbb{R};L_{2}(\Omega)^{3})).
\]
Then for all $\rho>0$ the equation
\[
\left(\partial_{t,\rho}\left(\begin{array}{cc}
\mu & 0\\
0 & 0
\end{array}\right)+\left(\begin{array}{cc}
0 & 0\\
0 & \sigma
\end{array}\right)+\left(\begin{array}{cc}
0 & \curl_{0}\\
-\curl & 0
\end{array}\right)\right)\left(\begin{array}{c}
H\\
E
\end{array}\right)=\left(\begin{array}{c}
K\\
-J
\end{array}\right)
\]
admits a unique solution $(E,H)\in L_{2,\rho}(\mathbb{R};L_{2}(\Omega)^{6})$
for all $(J,K)\in L_{2,\rho}(\mathbb{R};L_{2}(\Omega)^{6}).$ If,
in addition, $J\in H_{\rho}^{1/2}(\mathbb{R};L_{2}(\Omega)^{3})$
then
\begin{align*}
E & \in H_{\rho}^{1/2}(\mathbb{R};L_{2}(\Omega)^{3})\cap L_{2,\rho}(\mathbb{R};H_{0}(\curl,\Omega))\text{ and }\\
H & \in H_{\rho}^{1}(\mathbb{R};L_{2}(\Omega)^{3})\cap H_{\rho}^{1/2}(\mathbb{R};H(\curl,\Omega)).
\end{align*}
\end{thm}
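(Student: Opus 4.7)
The plan is to cast the eddy-current system into the abstract framework of \prettyref{thm:max_reg_commuator}. Setting $H_{0}=H_{1}=L_{2}(\Omega)^{3}$, I would define $C\coloneqq-\curl$ with $\dom(C)=H(\curl,\Omega)$. Using the standard identity $\curl_{0}^{*}=\curl$, its adjoint is $C^{*}=-\curl_{0}$ with $\dom(C^{*})=H_{0}(\curl,\Omega)$, so that
\[
A\coloneqq\begin{pmatrix}0 & -C^{*}\\ C & 0\end{pmatrix}=\begin{pmatrix}0 & \curl_{0}\\ -\curl & 0\end{pmatrix}
\]
is skew-selfadjoint on $L_{2}(\Omega)^{3}\oplus L_{2}(\Omega)^{3}$. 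Taking $\mathcal{M}_{00}\coloneqq\mu$ (identified with its canonical extension to space-time), $\mathcal{N}_{11}\coloneqq\sigma$, and all other block entries zero yields precisely the structure \prettyref{eq:A is C star C}, \prettyref{eq:M is M00}, \prettyref{eq:N is Matrix}. With this identification the right-hand side $(K,-J)$ belongs to $L_{2,\rho}(\R;H_{0})\times L_{2,\rho}(\R;H_{1})$ in general and to $L_{2,\rho}(\R;H_{0})\times H_{\rho}^{1/2}(\R;H_{1})$ under the extra assumption $J\in H_{\rho}^{1/2}(\R;L_{2}(\Omega)^{3})$.

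Next I would verify the remaining hypotheses of \prettyref{thm:max_reg_commuator}. Because $\mu$ acts only in space and is time-independent, it commutes with $\partial_{t,\rho}^{-1}$, hence $\partial_{t,\rho}^{-1}\mathcal{M}=\mathcal{M}\partial_{t,\rho}^{-1}$ and $\mathcal{M}'=0$. The bound $\mu\geq c$ directly yields $\Re\langle\mathcal{M}_{00}\phi_{0},\phi_{0}\rangle_{\rho,0}\geq c\langle\phi_{0},\phi_{0}\rangle_{\rho,0}$. For the positive definiteness of $\partial_{t,\rho}\mathcal{M}+\mathcal{N}$ on $L_{2,\rho}(\R;H)$, I compute for $\phi=(\phi_{0},\phi_{1})\in H_{\rho}^{3/2}(\R;L_{2}(\Omega)^{6})$, using that $\mu$ commutes with $\partial_{t,\rho}$, is selfadjoint, and $\mu^{1/2}$ is bounded and boundedly invertible:
\[
\Re\langle\partial_{t,\rho}\mu\phi_{0},\phi_{0}\rangle_{\rho,0}=\Re\langle\partial_{t,\rho}\mu^{1/2}\phi_{0},\mu^{1/2}\phi_{0}\rangle_{\rho,0}=\rho\langle\mu\phi_{0},\phi_{0}\rangle_{\rho,0}\geq\rho c\|\phi_{0}\|_{\rho,0}^{2},
\]
where I used $\Re\partial_{t,\rho}=\rho$ from \prettyref{prop:elementary facts td}. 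Combining this with $\Re\langle\sigma\phi_{1},\phi_{1}\rangle_{\rho,0}\geq c\|\phi_{1}\|_{\rho,0}^{2}$ gives the required lower bound with constant $c\min\{\rho,1\}>0$.

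Finally, since $\mathcal{N}$ is block-diagonal with only $\sigma$ acting non-trivially, $[\partial_{t,\rho}^{1/2},\mathcal{N}]$ is block-diagonal with $[\partial_{t,\rho}^{1/2},\sigma]$ in the lower-right corner. Picking $\varepsilon<c$ in the hypothesis on $\sigma$ then supplies the required $\tilde{c}<c$. Applying \prettyref{thm:max_reg_commuator} delivers the asserted regularity of $(H,E)$ under the additional assumption $J\in H_{\rho}^{1/2}(\R;L_{2}(\Omega)^{3})$. Unique existence of the $L_{2,\rho}$-solution for arbitrary $(J,K)\in L_{2,\rho}(\R;L_{2}(\Omega)^{6})$ follows directly from \prettyref{thm:sol_theory_basic}, which applies since $\mathcal{M}'=0$ and the $L_{2,\rho}$-coercivity has just been established. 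No substantial obstacle is anticipated: the statement is essentially a direct instantiation of the abstract machinery, the only non-trivial verification being the $L_{2,\rho}$-coercivity above.
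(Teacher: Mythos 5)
Your proposal is correct and follows the paper's approach: the paper's own proof consists of the single sentence ``The proof is a direct application of \prettyref{thm:max_reg_commuator},'' and your identification of $C=-\curl$, $C^{*}=-\curl_{0}$, $\mathcal{M}_{00}=\mu$, $\mathcal{N}_{11}=\sigma$ and verification of the positivity and commutator hypotheses is precisely the bookkeeping this hides. The only minor imprecision is that the $L_{2,\rho}$-coercivity constant you obtain is $c\min\{\rho,1\}$ rather than $c$, so one should choose $\varepsilon<c\min\{\rho,1\}$ in the commutator hypothesis; since the hypothesis allows arbitrarily small $\varepsilon$, this does not affect the argument.
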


\begin{proof}
The proof is a direct application of \prettyref{thm:max_reg_commuator}.
\end{proof}
\begin{rem}
(a) Again, the commutator condition imposed on $\sigma$ is satisfied,
if $\sigma$ is a multiplication operator induced by a function satisfying
either \prettyref{eq:egert} or \prettyref{eq:Zacher}. 

(b) In applications, non-zero terms $K$ can occur, if one considers
inhomogeneous boundary values. Note that a result corresponding to
\prettyref{thm:Maxwellmax} is valid also for mixed boundary conditions
or with homogeneous boundary conditions for $H$. 

(c) There is no condition assumed on the regularity of the boundary
of $\Omega$.
\end{rem}

\section{Conclusion\label{sec:Conclusion}}

We presented a maximal regularity theorem for evolutionary equations.
The core assumptions abstracly describe a parabolic type evolutionary
equation and lead to well-posedness on $L_{2,\rho}$ and $H_{\rho}^{1/2}$.
For applications, the operator theoretic insight of the need of commutator
estimates for the commutator with $\partial_{t}^{1/2}$ found in \cite{Dier_Zacher2017}
and \cite{Auscher_Egert2016} showed to be decisive also for evolutionary
equations. Moreover, we showed that both conditions on the coefficients
imposed in \cite{Dier_Zacher2017} and \cite{Auscher_Egert2016},
which are not comparable, imply the well-posedness in $H_{\rho}^{1/2}$
and hence, yield the maximal regularity of the problem under consideration
within the presented framework. Naturally, the regularity phenomenon
for the unknown to belong to $H^{1/2}$ with values in the form domain,
observed in \cite{Dier_Zacher2017} and \cite{Auscher_Egert2016},
resurfaced also in the framework of evolutionary equations. The conditions
derived here are deliberately focussed on the coefficients rather
than the whole space-time operator in order that it is possible to
generate results independent of the regularity of the boundary of
the underlying domain, which is needed in \cite{Achache2019} in order
to warrant some form of the square root property. Due to the view
of the time derivative as a normal continuously invertible operator
it is possible to use a straightforward functional calculus and to
compute fractional powers of the time derivative and to work with
them without the need of explicitly invoking the Hilbert transform
or other technicalities. It remains to be seen, whether the commutator
assumptions or the basic result \prettyref{thm:main} implying maximal
regularity lead to slightly stronger statements also in the situation
of divergence form equations.

\bibliographystyle{abbrv}

\begin{thebibliography}{10}

\bibitem{Achache2019}
M.~Achache and E.~M. Ouhabaz.
\newblock Lions' maximal regularity problem with {$H^{\frac12}$}-regularity in
  time.
\newblock {\em J. Differential Equations}, 266(6):3654--3678, 2019.

\bibitem{Arendt2017}
W.~Arendt, D.~Dier, and S.~Fackler.
\newblock J. {L}. {L}ions' problem on maximal regularity.
\newblock {\em Arch. Math. (Basel)}, 109(1):59--72, 2017.

\bibitem{Auscher_Egert2016}
P.~Auscher and M.~Egert.
\newblock On non-autonomous maximal regularity for elliptic operators in
  divergence form.
\newblock {\em Arch. Math. (Basel)}, 107(3):271--284, 2016.

\bibitem{Bergh_Lofstrom1976}
J.~Bergh and J.~L\"{o}fstr\"{o}m.
\newblock {\em Interpolation spaces. {A}n introduction}.
\newblock Springer-Verlag, Berlin-New York, 1976.
\newblock Grundlehren der Mathematischen Wissenschaften, No. 223.

\bibitem{Beyer2007}
H.~Beyer.
\newblock {\em {Beyond partial differential equations. On linear and
  quasi-linear abstract hyperbolic evolution equations.}}
\newblock Springer, Berlin, 2007.

\bibitem{Cooper1985}
J.~Cooper and W.~Strauss.
\newblock The initial boundary problem for the {M}axwell equations in the
  presence of a moving body.
\newblock {\em SIAM J. Math. Anal.}, 16(6):1165--1179, 1985.

\bibitem{CW17_FH}
S.~Cooper and M.~Waurick.
\newblock Fibre homogenisation.
\newblock {\em Journal of Functional Analysis}, 276(11):3363--3405, 2019.

\bibitem{Dier_Zacher2017}
D.~Dier and R.~Zacher.
\newblock Non-autonomous maximal regularity in {H}ilbert spaces.
\newblock {\em J. Evol. Equ.}, 17(3):883--907, 2017.

\bibitem{Diethelm2019}
K.~Diethelm, K.~Kitzing, R.~Picard, S.~Siegmund, S.~Trostorff, and M.~Waurick.
\newblock {A Hilbert space approach to fractional differential equations}.
\newblock Technical report, 2019.

\bibitem{Fackler2017}
S.~Fackler.
\newblock J.-{L}. {L}ions' problem concerning maximal regularity of equations
  governed by non-autonomous forms.
\newblock {\em Ann. Inst. H. Poincar\'{e} Anal. Non Lin\'{e}aire},
  34(3):699--709, 2017.

\bibitem{Lions1961}
J.-L. Lions.
\newblock {\em \'{E}quations diff\'{e}rentielles op\'{e}rationnelles et
  probl\`emes aux limites}.
\newblock Die Grundlehren der mathematischen Wissenschaften, Bd. 111.
  Springer-Verlag, Berlin-G\"{o}ttingen-Heidelberg, 1961.

\bibitem{Lunardi2018}
A.~Lunardi.
\newblock {\em Interpolation theory}, volume~16 of {\em Appunti. Scuola Normale
  Superiore di Pisa (Nuova Serie) [Lecture Notes. Scuola Normale Superiore di
  Pisa (New Series)]}.
\newblock Edizioni della Normale, Pisa, 2018.
\newblock Third edition.

\bibitem{Murray1985}
M.~A.~M. Murray.
\newblock Commutators with fractional differentiation and {BMO} {S}obolev
  spaces.
\newblock {\em Indiana Univ. Math. J.}, 34(1):205--215, 1985.

\bibitem{Ouhabaz2010}
E.~M. Ouhabaz and C.~Spina.
\newblock Maximal regularity for non-autonomous {S}chr\"{o}dinger type
  equations.
\newblock {\em J. Differential Equations}, 248(7):1668--1683, 2010.

\bibitem{Picard2009}
R.~Picard.
\newblock A structural observation for linear material laws in classical
  mathematical physics.
\newblock {\em Math. Methods Appl. Sci.}, 32(14):1768--1803, 2009.

\bibitem{Picard_McGhee2011}
R.~Picard and D.~McGhee.
\newblock {\em Partial differential equations}, volume~55 of {\em De Gruyter
  Expositions in Mathematics}.
\newblock Walter de Gruyter GmbH \& Co. KG, Berlin, 2011.
\newblock A unified Hilbert space approach.

\bibitem{Picard2020}
R.~Picard, D.~McGhee, S.~Trostorff, and M.~Waurick.
\newblock {\em {A Primer for a Secret Shortcut to PDEs of Mathematical
  Physics}}.
\newblock {Frontiers in Mathematics}. Birkh\"auser, 2020.
\newblock {manuscript accepted; contract signed, approx 200 pp}.

\bibitem{PTW2015_fractional}
R.~Picard, S.~Trostorff, and M.~Waurick.
\newblock On evolutionary equations with material laws containing fractional
  integrals.
\newblock {\em Math. Methods Appl. Sci.}, 38(15):3141--3154, 2015.

\bibitem{PTW2015_survey}
R.~Picard, S.~Trostorff, and M.~Waurick.
\newblock Well-posedness via monotonicity---an overview.
\newblock In {\em Operator semigroups meet complex analysis, harmonic analysis
  and mathematical physics}, volume 250 of {\em Oper. Theory Adv. Appl.}, pages
  397--452. Birkh\"{a}user/Springer, Cham, 2015.

\bibitem{PTW2017_maxreg}
R.~Picard, S.~Trostorff, and M.~Waurick.
\newblock On maximal regularity for a class of evolutionary equations.
\newblock {\em J. Math. Anal. Appl.}, 449(2):1368--1381, 2017.

\bibitem{PTWW13_NA}
R.~Picard, S.~Trostorff, M.~Waurick, and M.~Wehowski.
\newblock {On Non-Autonomous Evolutionary Problems}.
\newblock {\em {Journal of Evolution Equations}}, 13:751--776, 2013.

\bibitem{Pruess1993}
J.~Pr\"{u}ss.
\newblock {\em Evolutionary integral equations and applications}.
\newblock Modern Birkh\"{a}user Classics. Birkh\"{a}user/Springer Basel AG,
  Basel, 1993.
\newblock [2012] reprint of the 1993 edition.

\bibitem{seifert2020evolutionary}
C.~Seifert, S.~Trostorff, and M.~Waurick.
\newblock Evolutionary equations, 2020.
\newblock Lecture notes of the 23rd Internet seminar, arXiv: 2003.12403.

\bibitem{Trostorff2015}
S.~Trostorff.
\newblock On integro-differential inclusions with operator-valued kernels.
\newblock {\em Math. Methods Appl. Sci.}, 38(5):834--850, 2015.

\bibitem{Trostorff2020a}
S.~Trostorff.
\newblock Well-posedness for a general class of differential inclusions.
\newblock {\em J. Differential Equations}, 268(11):6489--6516, 2020.

\bibitem{W15_CB}
M.~Waurick.
\newblock {A note on causality in Banach spaces}.
\newblock {\em {Indagationes Mathematicae}}, 26(2):404--412, 2015.

\bibitem{Waurick2015}
M.~Waurick.
\newblock On non-autonomous integro-differential-algebraic evolutionary
  problems.
\newblock {\em Math. Methods Appl. Sci.}, 38(4):665--676, 2015.

\bibitem{W16_H}
M.~Waurick.
\newblock {\em {On the continuous dependence on the coefficients of
  evolutionary equations}}.
\newblock Habilitation, TU Dresden, 2016.
\newblock {arXiv:1606.07731}.

\end{thebibliography}

\end{document}